\documentclass[
 a4paper,
 11pt,
 reqno
]{amsart}

\usepackage[english]{babel}              
\usepackage[utf8]{inputenc}
\usepackage[T1]{fontenc}

\usepackage{lmodern}

\usepackage{amsmath}        
\usepackage{mathtools}      
\usepackage{amssymb}  
\usepackage{amsthm}
\usepackage{stackengine}
\usepackage{bbm}
\usepackage{mathrsfs}
\usepackage{nicematrix}

\numberwithin{equation}{section}

\usepackage{fancyhdr}
\usepackage{geometry}
\usepackage{graphicx}
\usepackage{tikz}
\usetikzlibrary{hobby, shapes.misc, decorations.pathmorphing, patterns}
\usepackage{pgfplots}       
\pgfplotsset{compat=1.10, ticks=none}
\usepgfplotslibrary{fillbetween}

\newcommand{\abs}[1]{\left\lvert {#1} \right\rvert}
\newcommand{\norm}[1]{\left\lVert {#1} \right\rVert}
\newcommand{\dpa}[1]{\left\langle {#1} \right\rangle}
\newcommand{\rb}[1]{\left( {#1} \right)}
\renewcommand{\sb}[1]{\left[ {#1} \right]}
\newcommand{\cb}[1]{\left\{ {#1} \right\}}

\newcommand{\R}{\mathbb R}

\newcommand{\N}{\mathbb N}
\newcommand{\Z}{\mathbb Z}
\newcommand{\T}{\mathbb T}
\newcommand{\F}{\mathcal F}

\newcommand{\St}{\mathcal S}

\renewcommand{\S}{\mathbb S}
\renewcommand{\P}{\mathscr P}
\newcommand{\cc}{\subset \subset}
\newcommand{\Ha}{\mathcal{H}}

\newcommand{\1}{\mathbbm{1}}
\newcommand{\defeq}{\vcentcolon =}

\newcommand{\loc}{\textup{loc}}

\newcommand\restrict[1]{\raisebox{-.3ex}{$\Big|$}_{#1}}
\newcommand{\mres}{\mathbin{\vrule height 0.8ex depth 0pt width 0.1ex\vrule height 0.1ex
depth 0pt width 0.8ex}}

\DeclareMathOperator{\de}{\, d \hspace{- 2pt}}
\DeclareMathOperator{\Id}{Id}

\DeclareMathOperator{\dive}{div}

\DeclareMathOperator{\reb}{\partial^* \hspace{- 2pt}}

\usepackage{url}
\usepackage[colorlinks = true, linkcolor= blue, citecolor = red, urlcolor = blue]{hyperref}
\usepackage{biblatex}
\usepackage[nameinlink, capitalise, noabbrev]{cleveref}
\usepackage{csquotes}
\theoremstyle{plain}
\newtheorem*{thm*}{Theorem}
\newtheorem{thm}{Theorem}[section]
\newtheorem{lem}[thm]{Lemma}
\newtheorem{prop}[thm]{Proposition}
\newtheorem{cor}[thm]{Corollary}
\theoremstyle{definition}
\newtheorem{defi}[thm]{Definition}
\newtheorem*{not*}{Notation}

\newtheorem{rem}[thm]{Remark}

\title[Minimality of the lamella for the anisotropic Ohta-Kawasaki energy]
{Local and global minimality of the lamella for the anisotropic Ohta-Kawasaki energy}
\author{Alberto Fiorini}

\begin{document}

\begin{abstract}

    In this paper we consider the volume-constrained minimization of a variant of the Ohta-Kawasaki functional 
    with an anisotropic surface energy replacing the standard perimeter.
    Following and suitably adapting the second variation approach devised in \cite{AFM13},
    we prove local minimality results for the horizontal lamellar configuration, in analogy with the isotropic case,
    under the assumption that the anisotropy is uniformly elliptic.
    If instead the Wulff shape of the anisotropy has upper and lower horizontal facets,
    we prove that the lamella exhibits a rigid behavior and is an isolated local minimizer for all parameter values.
    We conclude by showing some global minimality results, mostly focusing on the planar case.
    
\end{abstract}

\maketitle

\setcounter{tocdepth}{1}
\tableofcontents

\section{Introduction}

The Ohta-Kawasaki energy is a well-established functional used
to model microphase separation in diblock copolymers melts, arising as the $\Gamma$-limit of
a functional first introduced by Takao Ohta and Kyozi Kawasaki in \cite{OK86};
equilibrium configurations then correspond to volume-constrained minimizers of such functional.
Given a set $E \subset \T^n$, where $\T^n = \R^n / \Z^n$ is the $n$-dimensional flat torus of unit volume, $n \ge 2$, 
the Ohta-Kawasaki energy of $E$ is defined as 
\begin{equation*}
    J(E) \defeq P_{\T^n}(E) + \gamma \F(E) = P_{\T^n}(E) + \gamma \int_{\T^n} \rb{ \int_{\T^n} G(x, y) u_E(x) u_E(y) \de x} \de y.
\end{equation*}
Here $P_{\T^n}(E)$ denotes the perimeter of $E$ in the torus, $\gamma \ge 0$ is a fixed parameter,
$u_E \in BV(\T^n)$ is the phase function of $E$, defined as $u_E \defeq \1_E - \1_{E^c}$,
and $G$ is the Green's function on the torus for $- \Delta$ (see \cref{sec:2} for more details).
The two terms compete with each other when subject to a volume constraint: the first compels the sets to have minimal interface area 
with their complement, whereas the latter pushes for oscillation of the two phases.

In this paper we consider a generalization of the former energy by replacing the isotropic surface energy $P_{\T^n}$ 
with the anisotropic perimeter $P_{\T^n}^\phi$ corresponding to a surface tension $\phi$:
\begin{equation} \label{eq:1.anOK}
    J^\phi(E) = P_{\T^n}^\phi(E) + \gamma \F(E) = P_{\T^n}^\phi(E) + \gamma \int_{\T^n} \rb{ \int_{\T^n} G(x, y) u_E(x) u_E(y) \de x} \de y.
\end{equation}
We call $J^\phi$ the \textit{anisotropic Ohta-Kawasaki energy}.
Similarly to the isotropic case, the problem we study is the volume-constrained minimization of $J^\phi$, 
that is, we look for minimizers of
\begin{equation} \label{eq:1.anOKprob}
    \inf \Big\{ J^\phi(E) : E \subset \T^n, \ \abs{E} = M \Big\}, \qquad M \in (0, 1),
\end{equation}
where $\abs{E}$ denotes the Lebesgue measure of $E$.

\begin{figure}
    \centering
    \begin{tikzpicture} [scale = 1.7]
        \fill[gray!20] (- 1, - 1) -- (- 1, 1) -- (1, 1) -- (1, - 1) -- cycle;
        \fill[opacity = 0.5, pattern = crosshatch] (- 1, - 0.4) rectangle (1, 0.4);
        \draw[dashed] (- 1, 0.4) -- (1, 0.4);
        \draw[dashed] (- 1, - 0.4) -- (1, - 0.4);
        \draw (- 1, - 1) -- (- 1, 1) -- (1, 1) -- (1, - 1) -- cycle;
        \draw[thick, ->](- 1, - 0.00001) -- (- 1, 0.00001);
        \draw[thick, ->](1, - 0.00001) -- (1, 0.00001);
        \draw[thick, ->](- 0.00001, - 1) -- (0.00001, - 1);
        \draw[thick, ->](- 0.00001, 1) -- (0.00001, 1);
        \draw(1.3, 0.8) node{$\T^2$};
        \draw(- 1.2, 0.4) node{$\ell_2$};
        \draw(- 1.2, - 0.4) node{$\ell_1$};
        \draw(0.5, - 0.1) node{$L$};
    \end{tikzpicture}
    \caption{A lamella $L$ in the torus $\T^2$ for some $\ell_1, \ell_2 \in (0, 1)$, $\ell_1 < \ell_2$.}
    \label{fig:1.lamella}
\end{figure}
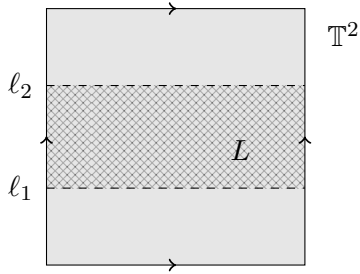

In particular, we focus on the local and global minimality properties of (single) lamellar configurations in $\T^n$, that is, 
up to translations, sets of the form $\T^{n - 1} \times (\ell_1, \ell_2)$ 
for some $\ell_1, \ell_2 \in (0, 1)$, $\ell_1 < \ell_2$, see \cref{fig:1.lamella}.
Any such set will be called \textit{lamella}, often denoted by $L$ in the following.

The minimality of lamellar configurations in the isotropic case was first investigated by Sternberg and Topaloglu in \cite{ST11}. 
They showed that, in the planar setting, a single lamella is a global minimizer of $J$ for a range of masses close to $\frac{1}{2}$ 
and for small values of the parameter $\gamma$. 
This result was later rederived and extended to three dimensions by Acerbi, Fusco, and Morini in \cite{AFM13} by means of a second variation approach. 
In particular, they proved that strict stability of a critical set for the functional $J$, that is, positivity of its second variation, 
implies local minimality with respect to small $L^1$-perturbations. 
Based on this criterion, they further established, in any dimension and for every value of $\gamma$, 
local minimality of multiple lamellar configurations (i.e., union of parallel lamellae) with a sufficiently large number of strips.
In a slightly different context, Morini and Sternberg proved in \cite{MS14} the minimality of lamellar patterns in thin domains of 
the type $(0, \varepsilon) \times (0, 1)$ with Neumann boundary conditions, without assuming that the perimeter term dominates the nonlocal energy.
Interestingly, Ren and Wei \cite{RW05} constructed stable, wriggled lamellar solutions of the Euler-Lagrange equation for the free energy $J$, 
which bifurcate from the perfect lamellar solution; wriggled lamellae had also been observed by Muratov \cite{Mur02}.
We also mention the works \cite{ACC22, DR18, GS16, GR19}, dealing with minimality of 
lamellar configurations for other functionals with local and nonlocal interaction. 
It is generally believed that minimizers of $J$ should display a periodic pattern: 
a rigorous, partial result in this direction is proved by Alberti, Choksi, and Otto in \cite{ACO09} 
(see also \cite{Mul93, RW03} for the one-dimensional case, and \cite{Cri18} for 
the construction of periodic local minimizers for $J$ close to stable, constant mean curvature surfaces in the torus).

In this paper we address the study of the minimality of lamellae in the anisotropic case. 
In the first main result (\cref{thm:2.strictminL1}) we prove, under the assumption of uniform ellipticity of $\phi$ (see \cref{defi:2.uniell}),
that the lamella is an isolated local minimizer for $J^\phi$ for small $L^1$-perturbations whenever the parameter $\gamma$ is small enough. 
The proof of this result follows the second variation approach introduced in \cite{AFM13} for the isotropic case: 
first we prove that positive second variation for the lamella implies minimality with respect to regular deformations 
(as formulated in \cref{thm:4.C1infty});
then, we refine this result as to hold for sets sufficiently close to the lamella in the $L^1$-sense, 
by an improved convergence argument based on the regularity theory for almost minimizers of the anisotropic perimeter.

We observe a more rigid behavior if we require instead that $\phi$ is horizontally flat, 
namely its Wulff shape presents two horizontal facets (see \cref{defi:2.horiflat}): 
indeed, we show in \cref{thm:2.lamminL1hor} that in this case the lamella is an isolated local minimizer for any choice of the parameter $\gamma$.
The proof exploits the previous second variation approach combined with an approximation argument as in \cite{Bon13}.
This result echoes a strong rigidity property of (quasi)-minimizers of the anistropic perimeter for crystalline surface tensions, observed by Figalli and Maggi in \cite[Theorem 7]{FM11}: they proved that in the planar case the normal vectors to such minimizers 
must align to the normal vectors of the Wulff shape. This was later extended to any dimension by Figalli and Zhang in \cite[Corollary 1.4]{FZ22}.

Concerning the global minimality, in \cref{thm:2.globmin2} we show that, in dimension $n = 2$, any uniformly elliptic surface tension 
whose Wulff shape has minimal width along the vertical direction (a condition encoded in \eqref{eq:7.minilam})
admits the horizontal lamella as global minimizer for \eqref{eq:1.anOKprob} 
when the volume constraint $M$ is close to $1/2$ (with explicit minimality range) and the parameter $\gamma > 0$ is sufficiently small.
This is again inspired by \cite{AFM13}, where the authors use well known results on minimizers for 
the isoperimetric problem with periodic boundary conditions; we have not found any reference in the literature for the periodic isoperimetric problem in the anisotropic case, for which we obtain a partial result of independent interest in \cref{thm:7.wulffminell}.

The paper is organized as follows.
In \cref{sec:2} we fix the notation and formally describe the variational problem just outlined,
stating the main results.
\cref{sec:3} is devoted to the computations of the first and second variation formulae of $J^\phi$,
which we need in \cref{sec:4} to partially prove the result of local minimality of the lamella 
in the regular case, which is then completed in \cref{sec:5}.
This is applied in \cref{sec:6} to prove the statement for a horizontally flat surface tension.
Finally, in \cref{sec:7} we study global minimality of the lamella in the planar setting.

\section{Setting and main results} \label{sec:2}

Here we fix the notation used in the remainder of this paper.
Then we formally define both $P_{\T^n}^\phi$ and $\F$ 
in order to have a clear definition of $J^\phi$ in \eqref{eq:1.anOK}.
Gathering all the necessary tools, we also give the statements of the main results.

Let $n \ge 2$. $\T^n$ denotes the $n$-dimensional flat torus of unit volume, that is, 
the quotient set of $\R^n$ obtained by identifying $x$ and $y$ whenever $y - x \in \Z^n$.
On $\T^n$ we have the usual function spaces $C^k(\T^n)$ and $W^{k, p}(\T^n)$ for 
$k \in \N \cup \cb{+ \infty}$ and $p \in [1, \infty]$,
that may be identified with the subspaces of $C^k(\R^n)$ and $W_\loc^{k, p}(\R^n)$ 
consisting of functions which are one-periodic along the coordinate directions 
(that is, functions $u : \R^n \to \R$ with $u(x + e_i) = u(x)$ for any $e_i$ in the canonical basis).
We write $x = (x', x_n)$ with $x' \in \T^{n - 1}$ to denote an element of $\T^n$, and indicate with $\nabla_{x'}$
and $\nabla_{x'}^2$ the gradient and the Hessian matrix with respect to the first $n - 1$ coordinates. 
In the following we often identify $E \subset \T^n$ with any of its translations,
since as we will later see $J^\phi$ is translation-invariant.
Therefore it makes sense to define an $L^1$-distance $d_{L^1}$ on the torus:
\begin{equation} \label{eq:2.distL1}
    d_{L^1}(E, F) \defeq \min_{x \in \T^n} \big| E \Delta (F + x) \big|, 
    \qquad \qquad E, F \subset \T^n.
\end{equation}
Here and in the following, for any sequence $\cb{E_h}_{h \in \N}$ of subsets of $\T^n$, $E \subset \T^n$,
we write $E_h \to E$ if $d_{L^1}(E_h, E) \to 0$ as $h \to \infty$.

\subsection{The anisotropic perimeter}

We start recalling some basic facts about the perimeter and its anisotropic counterpart, specifically in the case of the torus;
all the following properties are a consequence of well known results obtained in the classical Euclidean setting,
on noting that any set of finite perimeter $E \subset \T^n$ may be periodically extended 
as to obtain a set of locally finite perimeter $E_{\R^n} \subset \R^n$
(see \cite{AFP00} and \cite{Mag12} for a complete treatise).

The \textit{total variation} of a function $u \in L^1(\T^n)$ is defined as
\begin{equation*}
    \abs{D u}(\T^n) \defeq \sup \cb{\int_{\T^n} u \dive T : T \in C^1(\T^n; \R^n), \ \abs{T} \le 1},
\end{equation*}
and $u$ is said to be of \textit{bounded variation} ($u \in BV (\T^n)$) if $\abs{D u}(\T^n)$ is finite.
Then, a measurable set $E \subset \T^n$ has \textit{finite perimeter} if its characteristic function $\1_E$ belongs to $BV (\T^n)$,
defined as $P_{\T^n}(E) \defeq \abs{D \1_E} (\T^n)$.
For a set of finite perimeter $E \subset \T^n$ the \textit{Generalized Divergence Theorem} holds true:
\begin{equation*}
    \int_E \dive T = \int_{\reb E} T \cdot \nu_E \de \Ha^{n - 1} 
    \qquad \qquad \text{for any } T \in C^1(\T^n; \R^n),
\end{equation*}
where $\reb E$ is the reduced boundary of $E$,
$\nu_E : \reb E \to \S^{n - 1} = \cb{x \in \R^n : \abs{x} = 1}$ is the outer unit normal to $E$ 
and $\Ha^{n - 1}$ is the $(n - 1)$-dimensional Hausdorff measure.

To define an anisotropic surface energy we recall that 
a function $\phi : \R^n \to [0, + \infty)$ is said to be a \textit{surface tension}
if it enjoys the following properties:
\begin{itemize}
    \item 
        $\phi(\lambda x) = \lambda \phi(x)$ for any $x \in \R^n$ and $\lambda \ge 0$ 
        (\textit{positive one-homogeneity});
        
    \item 
        $\phi(x + y) \le \phi(x) + \phi(y)$ for any $x, y \in \R^n$ (\textit{subadditivity});

    \item
        $\phi(x) > 0$ for any $x \in \R^n \setminus \cb{0}$ (\textit{coercivity}).
        
\end{itemize}
Under the assumption of positive one-homogeneity, subadditivity is actually equivalent to convexity,
and in particular any surface tension is continuous. 
As such, the last property is equivalent to requiring that there exists $\alpha > 0$ 
with $\phi(x) \ge \alpha \abs{x}$ for any $x \in \R^n$.

\begin{defi}

    Let $\phi : \R^n \to [0, + \infty)$ be a surface tension.
    For any set of finite perimeter $E \subset \T^n$,
    the $\phi$\textit{-surface energy} (or \textit{anisotropic perimeter}) of $E$ in $\T^n$ is given by
    \begin{equation*}
        P_{\T^n}^\phi(E) \defeq \int_{\reb E} \phi(\nu_E(x)) \de \Ha^{n - 1} (x).    
    \end{equation*}
    
\end{defi}

The anisotropic version of the classical isoperimetric problem in $\R^n$ is known as the \textit{Wulff problem}:
minimize the anisotropic perimeter $P^\phi = P_{\R^n}^\phi$ among all sets $E \subset \R^n$ with volume
$\abs{E} = M$ for a fixed constant $M > 0$, i.e., 
\begin{equation} \label{eq:2.wulff}
    \inf \cb{P^\phi(E) : E \subset \R^n, \ \abs{E} = M},
\end{equation}
with the convention that $P^\phi(E) = + \infty$ if $E$ is not a set of finite perimeter.
The problem admits an explicit solution
\begin{equation*}
    W_\phi = \bigcap_{y \in \S^{n - 1}} \Big\{ x \in \R^n : x \cdot y < \phi(y) \Big\},
\end{equation*}
called \textit{Wulff shape of} $\phi$:
$W_\phi$ suitably rescaled as to satisfy the volume constraint
is the unique solution (up to translations) to the Wulff problem \eqref{eq:2.wulff} (see \cite{FM91}).
This can be equivalently stated through the \textit{Wulff inequality}: 
for every set $E \subset \R^n$ of finite perimeter and volume,
\begin{equation*}
    P^\phi(E) \ge n \abs{W_\phi}^{\frac{1}{n}} \abs{E}^{\frac{n - 1}{n}},
\end{equation*}
with equality if and only if $E = \lambda W_\phi + x$ for some $\lambda > 0$, $x \in \R^n$.
This estimate is improved in a quantitative fashion in \cite{FMP10}.

$W_\phi$ is open, bounded and convex, moreover $0 \in W_\phi$ for every surface tension $\phi$.
By the next proposition any such set is the Wulff shape for a suitable surface tension.

\begin{prop} \label{prop:2.surfchar}

    Let $K \subset \R^n$ be an open, bounded and convex set containing the origin.
    Then $K = W_\phi$, for the surface tension $\phi : \R^n \to [0, + \infty)$ defined as
    \begin{equation*}
        \phi(y) \defeq \sup \Big\{ x \cdot y : x \in K \Big\}, \qquad \qquad y \in \R^n. 
    \end{equation*}
    
\end{prop}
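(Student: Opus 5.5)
We have to check two things: that $\phi$, the support function of $K$, is a surface tension, and that $W_\phi = K$.

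\emph{$\phi$ is a surface tension.} Since $K$ is bounded, $\phi$ is finite, with $\phi(y) \le R\abs{y}$ if $K \subset B_R$. Positive one-homogeneity comes from pulling $\lambda \ge 0$ out of the supremum. Subadditivity holds because $x\cdot(y+z) = x\cdot y + x\cdot z \le \phi(y)+\phi(z)$ for each $x \in K$, and we then take the supremum in $x$. For coercivity, $K$ is open and contains $0$, so $B_r \subset K$ for some $r>0$. Testing with $x = r' y/\abs{y}$ for every $r'<r$ gives $\phi(y) \ge r\abs{y} > 0$ when $y \neq 0$. In particular $\phi \ge 0$, as required.

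\emph{The inclusion $K \subset W_\phi$.} Fix $x \in K$ and $y \in \S^{n-1}$. Since $K$ is open, $x + \varepsilon y \in K$ for some small $\varepsilon>0$. Hence $x\cdot y < (x+\varepsilon y)\cdot y \le \phi(y)$. The strict inequality is exactly what the open intersection defining $W_\phi$ requires, so $x \in W_\phi$.

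\emph{The inclusion $W_\phi \subset K$.} We argue by contradiction, and this is the main step. Suppose $x \in W_\phi \setminus K$. By the Hahn--Banach separation theorem for the open convex set $K$ and the point $x \notin K$, there is $y \in \S^{n-1}$ with $z\cdot y < x\cdot y$ for all $z \in K$. Taking the supremum over $z$ gives $\phi(y) \le x\cdot y$. This contradicts $x\cdot y < \phi(y)$, which holds because $x \in W_\phi$.

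The only point needing care is this last separation argument. Openness of $K$ is what allows a separating hyperplane through a point not in $K$, including a boundary point. With closed sets one would only obtain $\overline{K}$.

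Combining the two inclusions gives $W_\phi = K$.
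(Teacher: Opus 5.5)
Your proof is correct and follows the same route as the paper: you check that the support function is a surface tension, obtain $K \subset W_\phi$ from the strict inequality $x \cdot y < \phi(y)$ (you rightly make explicit that this uses openness of $K$, which the paper leaves implicit), and obtain $W_\phi \subset K$ by separating a point $x \notin K$ from $K$ with a hyperplane. One small correction to your closing remark: in $\R^n$ a weak separation $z \cdot y \le x \cdot y$ for all $z \in K$ exists for any convex $K$ and any $x \notin K$, and this already suffices for the second inclusion, so openness is genuinely needed only for the first inclusion.
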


\begin{proof}

    One easily shows that $\phi$ is indeed a surface tension.
    Moreover for any $x \in K$ and $y \in \R^n$ it holds $x \cdot y < \phi(y)$ so that $x \in W_\phi$.
    Conversely, if $x \not \in K$ then there exists a hyperplane separating 
    $\cb{x}$ from $K$ (since they are convex), that is, there exists $y \in \R^n$ with
    \begin{equation*}
        z \cdot y \le x \cdot y \qquad \qquad \text{for any } z \in K. 
    \end{equation*}
    Normalizing $y$ we have that $y \in \S^{n - 1}$ and $\phi(y) \le x \cdot y$, implying $x \not \in W_\phi$.
    This concludes the proof that $K = W_\phi$.
    \qedhere
    
\end{proof}

Two classes of surface tensions are well-studied in literature, 
for their applications to physical systems: uniformly elliptic surface tensions, 
whose Wulff shape is a uniformly convex set with boundary of class $C^2$ (see \cite[Section 2.5]{Sch13}),
and crystalline surface tensions, of which the Wulff shape is a convex polyhedron.
The two different cases are in some sense complementary, 
since they are used to describe the shape of liquid drops and crystals, respectively (see for instance \cite{FM11}).
However, in our case we consider another class of surface tensions in place of the crystalline one,
because for our purposes the relevant property is that the Wulff shape has two facets parallel 
to a face of the torus (see \cref{fig:2.wulshapes}).

\begin{defi} \label{defi:2.uniell}

    A surface tension $\phi : \R^n \to [0, + \infty)$ 
    is said to be \textit{uniformly elliptic with constant $\lambda$} (or simply \textit{$\lambda$-elliptic}) 
    for some $\lambda > 0$ if $\phi$ is of class $C^2$ outside of the origin and 
    \begin{equation*}
        \nabla^2 \phi(x) \sb{v, v} = \rb{\nabla^2 \phi(x) v} \cdot v 
        \ge \dfrac{\lambda}{\abs{x}} \rb{\abs{v}^2 - \dfrac{(x \cdot v)^2}{\abs{x}^2}}
        \qquad \qquad \text{for any } x, v \in \R^n, \ x \ne 0,
    \end{equation*}
    or, equivalently,
    \begin{equation*}
        \nabla^2 \phi(x) \sb{v, v} \ge \lambda
        \qquad \qquad \text{for any } x, v \in \S^{n - 1}, \ x \perp v. 
    \end{equation*}
    
\end{defi}

\begin{defi} \label{defi:2.horiflat}

    A surface tension $\phi : \R^n \to [0, + \infty)$ is said to be \textit{horizontally flat} 
    if the subdifferential $\partial \phi$ of $\phi$ at $\pm e_n$ satisfies
    \begin{equation*}
        \Ha^{n - 1} \big( \partial \phi (\pm e_n) \big)
        = \Ha^{n - 1} \Big( \big\{ x \in \partial W_\phi : \pm x_n = \phi(\pm e_n) \big\} \Big) > 0.
    \end{equation*}
    
\end{defi}

The first equality in the definition is a direct consequence of the following result.

\begin{lem}[{\cite[Theorem 1.7.4]{Sch13}}] \label{lem:2.subsurfchar}

    The subdifferential of a surface tension $\phi : \R^n \to [0, + \infty)$ at $v \in \S^{n - 1}$ 
    is equal to the support set of $W_\phi$ at $v$, that is
    \begin{equation*}
        \partial \phi(v) = \big\{ x \in \partial W_\phi : x \cdot v = \phi(v) \big\}.
    \end{equation*}
    
\end{lem}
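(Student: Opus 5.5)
We prove the two inclusions directly from the definitions, using that $\phi$ is the support function of $\overline{W_\phi}$, i.e.
\begin{equation*}
    \phi(y) = \sup\cb{x \cdot y : x \in W_\phi} \qquad \text{for any } y \in \R^n .
\end{equation*}
This representation is the first step. The inequality $\sup \le \phi$ is immediate from the definition of $W_\phi$. For the converse, suppose $\sup_{x\in W_\phi} x\cdot y < \phi(y)$ for some $y\neq 0$. Since $\phi$ is convex, continuous and positively one-homogeneous, it is the supremum of the linear functions it dominates. Hence there is $x_0$ with $x_0 \cdot z \le \phi(z)$ for all $z$ and $x_0\cdot y$ arbitrarily close to $\phi(y)$. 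Such an $x_0$ lies in $\overline{W_\phi}$: indeed $t x_0\in W_\phi$ for $t<1$, because $t x_0\cdot z\le t\phi(z)<\phi(z)$ for $z\ne0$, using coercivity. This gives a contradiction. We write $\overline{W_\phi}=\{x : x\cdot z\le\phi(z)\ \forall z\}$.

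Next we identify the subdifferential for $v \in \S^{n-1}$:
\begin{equation*}
    \partial\phi(v)=\cb{x\in\R^n : \phi(z)\ge\phi(v)+x\cdot(z-v)\ \forall z}.
\end{equation*}
Testing with $z=0$ and $z=2v$ and using one-homogeneity gives $x\cdot v=\phi(v)$. The inequality then reduces to $\phi(z)\ge x\cdot z$ for all $z$, i.e.\ $x\in\overline{W_\phi}$. Conversely, if $x\in\overline{W_\phi}$ and $x\cdot v=\phi(v)$, then $\phi(z)\ge x\cdot z=\phi(v)+x\cdot(z-v)$, so $x\in\partial\phi(v)$. Thus
\begin{equation*}
    \partial\phi(v)=\cb{x\in\overline{W_\phi}: x\cdot v=\phi(v)}.
\end{equation*}

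Finally we replace $\overline{W_\phi}$ by $\partial W_\phi$. Any $x$ with $x\cdot v=\phi(v)$ is not in the open set $W_\phi$, by the strict inequality in its definition. Hence such points of $\overline{W_\phi}$ lie in $\partial W_\phi$. Conversely, points of $\partial W_\phi\subset\overline{W_\phi}$ satisfying $x\cdot v=\phi(v)$ belong to the set above. This proves the claim.

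The main obstacle is the support-function representation. It relies on the biconjugate/Hahn–Banach characterization of sublinear functions, and on checking that $W_\phi$ is nonempty with closure equal to the closed polar set. For the latter, $0\in W_\phi$ by coercivity, and the radial argument above shows that every point of the closed set is a limit of points of $W_\phi$.
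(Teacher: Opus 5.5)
Your proof is correct and is essentially the standard argument behind the result the paper cites without proof (\cite[Theorem 1.7.4]{Sch13}, applied to $\phi$ viewed as the support function of $\overline{W_\phi}$), made self-contained: you test the subgradient inequality at $z=0$ and $z=2v$, then pass from $\overline{W_\phi}$ to $\partial W_\phi$ using the strict inequality that defines $W_\phi$. Your Step 1, the Hahn--Banach support-function representation that you call the main obstacle, is never used in Steps 2--3; they only need $\overline{W_\phi}=\{x : x\cdot z\le\phi(z)\ \forall z\}$, which your elementary radial argument (with $t\in[0,1)$ and coercivity) already provides.
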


For a uniformly elliptic surface tension this also ensures that 
$\nabla \phi(v) \in \partial W_\phi$ with $\nu_{W_\phi}(\nabla \phi(v)) = v$ for any $v \in \S^{n - 1}$.

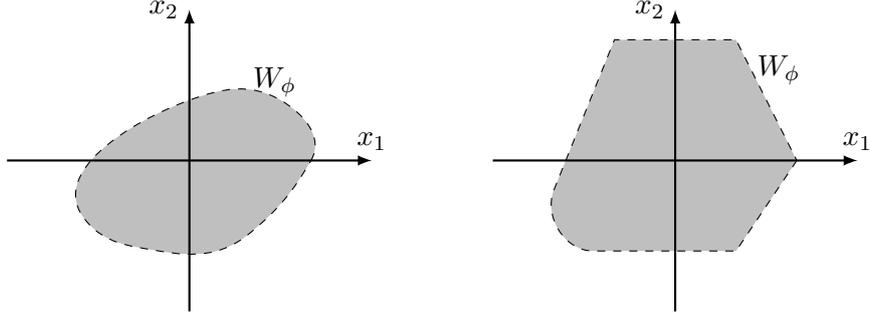
\begin{figure}
    \centering
    \begin{tikzpicture}[scale = 0.8]
        \draw[dashed, fill = gray!50] (2, 0) to[out = 60, in = - 30] (1.5, 1) to[out = 150, in = 20] (0, 1)
        to[out = 200, in = 130] (- 1.7, - 1) to[out = 310, in = 170] (- 0.5, - 1.5) to[out = - 10, in = 220] (1, - 1.2) to[out = 40, in = 240] cycle;
        \draw[thick, -latex](0, - 2.5) -- (0, 2.5) node[left]{$x_2$};
        \draw[thick, -latex](- 3, 0) -- (3, 0) node[above]{$x_1$};
        \draw(1.4, 1.3) node{$W_{\phi}$};
    \end{tikzpicture}
    \hspace{1 cm}
    \begin{tikzpicture} [scale = 0.8]
        \draw[dashed, fill = gray!50] (2, 0) -- (1, - 1.5) -- (- 1.35, - 1.5) to[out = 180, in = 250] (- 2, - 0.5) 
        -- (- 1, 2) -- (1, 2) -- cycle;
        \draw[thick, -latex](0, - 2.5) -- (0, 2.5) node[left]{$x_2$};
        \draw[thick, -latex](- 3, 0) -- (3, 0) node[above]{$x_1$};
        \draw(1.7, 1.5) node{$W_\phi$};
    \end{tikzpicture}
    \caption{The Wulff shapes for a uniformly elliptic and a horizontally flat surface tensions respectively.
    Note that the Wulff shape of a horizontally flat surface tension may or may not be regular, however it is never uniformly elliptic since 
    its Wulff shape is not uniformly convex.}
    \label{fig:2.wulshapes}
\end{figure}

\subsection{The nonlocal energy}

Let $G : (\T^n)^2 \to \R$ be the \textit{Green's function of $- \Delta$ on the torus}, i.e., for any $x \in \T^n$
the function $G_x \defeq G(x, \cdot)$ is the solution (in the sense of distributions) to
\begin{equation} \label{eq:2.green}
    \begin{dcases}
        - \Delta G_x = \delta_x - 1 \qquad \text{ in } \T^n, \\
        \int_{\T^n} G_x = 0, 
    \end{dcases}
\end{equation}
where $\delta_x$ is the Dirac delta centered at $x$.
The Green's function is symmetric and $G(x, y)$ is a function of $y - x$ for any $x, y \in \T^n$, furthermore it is integrable.

Thus, given any signed measure $\mu$ on $\T^n$ (i.e., a difference of two finite measures on $\T^n$), the function $w \in L^1(\T^n)$ defined by
\begin{equation*}
    w(x) \defeq \int_{\T^n} G_x(y) \de \mu(y), \qquad \qquad x \in \T^n,
\end{equation*}
is the unique solution (in the sense of distributions) to the problem
\begin{equation*}
    \begin{dcases}
        - \Delta w = \mu - \mu(\T^n) \qquad \text{ in } \T^n, \\
        \int_{\T^n} w = 0.
    \end{dcases}
\end{equation*}
If $\mu \in H^{- 1}(\T^n)$ and $\mu(\T^n) = 0$, Lax-Milgram's Lemma yields $w \in H^1(\T^n)$ with the equality
\begin{equation} \label{eq:2.measolest}
    \norm{\nabla w}_{L^2(\T^n)} = \norm{\mu}_{H^{- 1}(\T^n)}.
\end{equation}
In particular we have that
\begin{equation} \label{eq:2.greenpos}
    \norm{\mu}_{H^{- 1}(\T^n)}^2 = \int_{\T^n} \abs{\nabla w}^2 = - \int_{\T^n} w \Delta w = \int_{\T^n} w \de \mu
    = \int_{\T^n} \rb{\int_{\T^n} G(x, y) \de \mu(x)} \de \mu(y).
\end{equation}

Let $E \subset \T^n$ be measurable, and set $u_E \defeq \1_E - \1_{\T^n \setminus E} \in L^2(\T^n)$.
By the previous considerations $v_E : \T^n \to \R$ 
defined by $v_E(x) \defeq \int_{\T^n} G(x, y) u_E(y) \de y$, $x \in \T^n$,
belongs to $H^1(\T^n)$ and is the (weak) solution to
\begin{equation} \label{eq:2.solution}
    \begin{dcases}
        - \Delta v_E = u_E - m_E \qquad \text{ in } \T^n, \\
        \int_{\T^n} v_E = 0,
    \end{dcases}
\end{equation}
where $m_E \defeq \int_{\T^n} u_E = 2 \abs{E} - 1$.
The nonlocal term is then defined as follows.

\begin{defi}

    Given a measurable set $E \subset \T^n$, the \textit{nonlocal energy} of $E$ in $\T^n$ is defined as
    \begin{equation*}
        \F(E) \defeq \int_{\T^n} \rb{\int_{\T^n} G(x, y) u_E(x) u_E(y) \de x} \de y 
         = \int_{\T^n} v_E u_E = \int_{\T^n} \abs{\nabla v_E}^2.
    \end{equation*}
    
\end{defi}

\begin{rem} \label{rem:2.solreg}

    For any $p \in [1, \infty)$ one has $u_E \in L^p(\T^n)$, thus the Calder\'on-Zygmund inequality
    (see \cite[Theorem 9.9]{GT77}) and the periodic setting ensure that $v_E \in W^{2, p}(\T^n)$ with
    \begin{equation*}
        \norm{v_E}_{W^{2, p}(\T^n)} \le C_1 \norm{u_E - m_E}_{L^p(\T^n)}
    \end{equation*}
    for some constant $C_1 = C_1(p, n) > 0$.
    In particular, for any $p > n$, the Sobolev embedding $W^{2, p}(\T^n) \hookrightarrow C^1(\T^n)$ is compact,
    which in turn implies that $v_E \in C^1(\T^n)$ and
    \begin{equation} \label{eq:2.solestimates1}
        \norm{v_E}_{C^1(\T^n)} \le C_2 \norm{u_E - m_E}_{L^p(\T^n)} \le 2 C_2,
    \end{equation}
    for some constant $C_2 = C_2(p, n) > 0$, where the last inequality follows by noting that
    \begin{equation*}
        \norm{u_E - m_E}_{L^p(\T^n)} \le \norm{u_E}_{L^p(\T^n)} + \norm{m_E}_{L^p(\T^n)} = 1 + \abs{m_E} \le 2.
    \end{equation*}
    By the same argument, for any measurable set $F \subset \T^n$ one gets that
    \begin{equation} \label{eq:2.solestimates2}
        \begin{split}
            \norm{v_E - v_F}_{C^1(\T^n)} &\le C_2 \norm{u_E - m_E - (u_F - m_F)}_{L^p(\T^n)} \\
            &\le C_2 \rb{ \norm{u_E - u_F}_{L^p(\T^n)} + \norm{m_E - m_F}_{L^p(\T^n)}} \\
            &\le 2^p C_2 \abs{E \Delta F} + 2 C_2 \Big| \abs{E} - \abs{F} \Big|
            \le 2^{p + 1} C_2 \abs{E \Delta F}.
        \end{split}
    \end{equation}
    As a simple consequence we obtain the following well-known lemma, see for instance \cite[Lemma 2.6]{AFM13}.
    
\end{rem}

\begin{lem} \label{lem:2.nonlocont}

    There exists a constant $C_3 = C_3(n) > 0$ such that, 
    for any pair of measurable sets $E, F \subset \T^n$, it holds
    \begin{equation*}
        \abs{\F(E) - \F(F)} \le C_3 d_{L^1}(E, F).
    \end{equation*}
    In particular, $\F$ is continuous with respect to the $L^1$-distance.
    
\end{lem}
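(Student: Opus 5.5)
The plan is to reduce to the case of honest symmetric differences, and then use the representation $\F(E) = \int_{\T^n} v_E u_E$ together with the uniform estimates of \cref{rem:2.solreg}.

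\emph{Reduction to translates.} The Green's function $G(x,y)$ depends only on $y - x$, so $v_{F+z}(x) = v_F(x - z)$ and $u_{F+z}(x) = u_F(x - z)$. A change of variables then gives $\F(F + z) = \F(F)$ for every $z \in \T^n$. The minimum in \eqref{eq:2.distL1} is attained, because $z \mapsto \abs{E \Delta (F + z)}$ is continuous (continuity of translations in $L^1$). Choosing a minimizing $z$ and replacing $F$ by $F + z$, it is therefore enough to show
\begin{equation*}
    \abs{\F(E) - \F(F)} \le C_3 \abs{E \Delta F}.
\end{equation*}

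\emph{The main estimate.} Start from the splitting
\begin{equation*}
    \F(E) - \F(F) = \int_{\T^n} v_E (u_E - u_F) + \int_{\T^n} (v_E - v_F) u_F .
\end{equation*}
For the first term, note that $\abs{u_E - u_F} = 2 \cdot \1_{E \Delta F}$. Fix some $p > n$, for instance $p = n+1$. Then \eqref{eq:2.solestimates1} gives $\norm{v_E}_{L^\infty} \le 2C_2$, so the first term is bounded by $4 C_2 \abs{E \Delta F}$. For the second term, use $\abs{u_F} \le 1$ and $\abs{\T^n} = 1$. Then \eqref{eq:2.solestimates2} bounds it by $\norm{v_E - v_F}_{C^0} \le 2^{p+1} C_2 \abs{E \Delta F}$. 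Adding the two bounds gives the claim with $C_3 = (4 + 2^{n+2}) C_2(n+1, n)$, which depends only on $n$.

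Alternatively, one can avoid the $C^1$ estimates altogether. By symmetry of $G$,
\begin{equation*}
    \F(E) - \F(F) = \int_{\T^n} (v_E + v_F)(u_E - u_F),
\end{equation*}
and it remains only to bound $v_E$ and $v_F$ in $L^\infty$. This uses only the first estimate.

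\emph{Continuity.} This follows immediately: if $d_{L^1}(E_h, E) \to 0$, then $\F(E_h) \to \F(E)$.

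There is no real obstacle here. The only point needing care is that the constants $C_2$ are uniform in $E$ and $F$, which is exactly what \cref{rem:2.solreg} provides, since $\norm{u_E - m_E}_{L^p} \le 2$ for every measurable $E$.
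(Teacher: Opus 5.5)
Your main estimate is the paper's argument: the same splitting, the same use of \eqref{eq:2.solestimates1} and \eqref{eq:2.solestimates2}, and the same constant. It inherits the paper's flaw at the second term, because the bound $\norm{v_E - v_F}_{C^0} \le 2^{p+1} C_2 \abs{E \Delta F}$ is false. The derivation of \eqref{eq:2.solestimates2} confuses $\norm{u_E - u_F}_{L^p}^p = 2^p \abs{E \Delta F}$ with the norm itself. In fact $\norm{u_E - u_F}_{L^p} = 2 \abs{E \Delta F}^{1/p}$, so the Calder\'on--Zygmund route only gives $\norm{v_E - v_F}_{C^1} \lesssim \abs{E \Delta F}^{1/p}$. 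No linear sup-norm bound can hold. Take $E = \emptyset$ and $F = B$, a small ball of volume $\varepsilon$. Then $v_F - v_E = 2 \int_B G(\cdot, y) \de y$, and its value at the center of $B$ is of order $\varepsilon^{2/n}$ for $n \ge 3$ and of order $\varepsilon \log(1/\varepsilon)$ for $n = 2$. So the main estimate, as written, does not prove the lemma.

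Your alternative argument is correct, and it is the one you should present as the proof. By symmetry of $G$ and Fubini, $\int_{\T^n} v_E u_F = \int_{\T^n} v_F u_E$. Hence $\F(E) - \F(F) = \int_{\T^n} (v_E + v_F)(u_E - u_F)$. The uniform bound \eqref{eq:2.solestimates1} is valid, since it only uses $\norm{u_E - m_E}_{L^p} \le 2$, and it gives $\abs{\F(E) - \F(F)} \le 8 C_2 \abs{E \Delta F}$.

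You could also keep your splitting and treat the second term differently. One option is symmetry: $\int_{\T^n} (v_E - v_F) u_F = \int_{\T^n} v_F (u_E - u_F)$. The other is an $L^1$ bound instead of an $L^\infty$ one: $\norm{v_E - v_F}_{L^1} \le 2 \abs{E \Delta F} \int_{\T^n} \abs{G(x, 0)} \de x$, because $G(x,y)$ depends only on $y - x$. The point is that $\F(E) - \F(F)$ is linear in $\abs{E \Delta F}$ even though $v_E - v_F$ is not linear in sup norm.

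Your reduction to the translate attaining $d_{L^1}$ and your continuity conclusion are fine.
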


\begin{proof}

    Up to a translation we may assume that $d_{L^1}(E, F) = \abs{E \Delta F}$,
    since $\F$ is evidently translation invariant.
    The proof is then a straightforward application 
    of \eqref{eq:2.solestimates1} and \eqref{eq:2.solestimates2} for $p = n + 1$.
    Indeed it suffices to estimate the left-hand side by
    \begin{align*}
        \abs{\F(E) - \F(F)} &= \abs{\int_{\T^n} v_E u_E - \int_{\T^n} v_F u_F} 
        \le \int_{\T^n} \Big| v_E (u_E - u_F) + u_F (v_E - v_F) \Big| \\
        &\le 2 \norm{v_E}_{C_1(\T^n)} \abs{E \Delta F} + \norm{v_E - v_F}_{C^1(\T^n)} \\
        &\le 4 C_2 \abs{E \Delta F} + 2^{n + 2} C_2 \abs{E \Delta F} \le 2^{n + 3} C_2 \abs{E \Delta F},
    \end{align*}
    so that setting $C_3 = 2^{n + 3} C_2$ we may conclude.
    \qedhere
    
\end{proof}

\subsection{Main results}

We are now ready to describe precisely the problem we study.
First, given a surface tension $\phi$ and a parameter $\gamma \ge 0$ 
we rigorously define the \textit{anisotropic Ohta-Kawasaki energy} 
$J^\phi : \P(\T^n) \to [0, + \infty]$ by 
\begin{equation} \label{eq:2.anOKexp}
    J^\phi(E) \defeq P_{\T^n}^\phi(E) + \gamma \F(E) = \int_{\reb E} \phi(\nu_E) \de \Ha^{n - 1} 
    + \gamma \int_{\T^n} \rb{ \int_{\T^n} G(x, y) u_E(x) u_E(y) \de x} \de y
\end{equation}
if $E \subset \T^n$ is a set of finite perimeter and $J^\phi(E) \defeq + \infty$ otherwise.

Since both $P_{\T^n}^\phi$ and $\F$ are translation invariant, the same is true for $J^\phi$,
meaning that $J^\phi(E) = J^\phi(E + x)$ for any $x \in \T^n$, $E \subset \T^n$.
As such, up to a quotient of $\P(\T^n)$, we identify $E \subset \T^n$ with any of its translations,
and it makes sense to study the problem \eqref{eq:1.anOKprob} employing the $L^1$-distance $d_{L^1}$ defined in \eqref{eq:2.distL1}.

\begin{rem} \label{rem:2.minexst}

    By the Direct Method of Calculus of Variations one easily proves 
    that the problem \eqref{eq:1.anOKprob} admits a minimizer:
    considering a minimizing sequence $\cb{E_h}_{h \in \N}$, by positivity of the nonlocal energy it must be
    $\sup_{h \in \N} P_{\T^n}^\phi(E_h) < + \infty$,
    but then by compactness it holds $E_h \to E$ (up to a subsequence) 
    for some $E \subset \T^n$ with finite perimeter.
    \cref{lem:2.nonlocont} ensures that the nonlocal energy is continuous with respect to the $L^1$-distance, 
    and by lower semicontinuity of the anisotropic perimeter one concludes that $E$ is a minimizer. 
    
\end{rem}

Even if existence of minimizers is easy to prove, we have no information on the geometry of a minimizer.
The goal of this paper is to study the local and global minimality of the single lamellar configurations,
following the second variation approach introduced in \cite{AFM13} for the (isotropic) Ohta-Kawasaki energy.
An essential step of the strategy consists in comparing the energy of a lamella with sufficiently smooth perturbations;
to this aim, we introduce the \textit{class of strips of volume} $M \in (0, 1)$, defined as 
\begin{equation} \label{eq:2.strip}
    \St_M \defeq \cb{(g_1, g_2) \in C^1(\T^{n - 1}) \times C^1(\T^{n - 1}) : 
    0 < g_1 < g_2 < 1, \ \int_{\T^{n - 1}} (g_2 - g_1) = M}.
\end{equation}
The reason behind such terminology is that we may associate with any couple $(g_1, g_2) \in \St_M$
a set of volume $M$, called the \textit{strip of $g_1$ and $g_2$}, given by
\begin{equation} \label{eq:2.stripdef}
    \Big\{ (x', x_n) \in \T^n : g_1(x') < x_n < g_2(x') \Big\}.
\end{equation}
With some abuse of notation, we will always identify the pair $(g_1, g_2) \in \St_M$ with the previous set,
which will also be denoted $(g_1, g_2)$ (see \cref{fig:2.striplam}).
The lamella $L$ of volume $M$ is then represented by a couple $(\ell_1, \ell_2)$, 
where $0 < \ell_1 < \ell_2 < 1$ and $\ell_2 - \ell_1 = M$.
For any strip $(g_1, g_2) \in \St_M$ we may consider a one-parameter volume-preserving 
perturbation of class $C^1$: define a normed vector space $(V, \norm{\cdot}_V)$ as 
\begin{equation*}
    V \defeq \cb{\psi \in C^1(\T^{n - 1}) : \int_{\T^{n - 1}} \psi = 0}, \qquad \qquad 
    \norm{\psi}_V \defeq \norm{\nabla \psi}_{L^2(\T^{n - 1})}, \quad \psi \in V,
\end{equation*}
so that, for any two maps $\psi_1, \psi_2 \in V$, taking $\abs{t}$ sufficiently small,
we have that the strip $(g_1 + t \psi_1, g_2 + t \psi_2)$ belongs again to $\St_M$.
The norm $\norm{\cdot}_V$ is equivalent to the $H^1$-norm on $\T^{n - 1}$ 
by means of the Poincaré-Wirtinger inequality:
\begin{equation} \label{eq:2.poiwirt}
    \int_{\T^{n - 1}} \psi^2 \le (n - 1) \int_{\T^{n - 1}} \abs{\nabla \psi}^2
    \qquad \text{for any } \psi \in H^1(\T^{n - 1}) \text{ with } \int_{\T^{n - 1}} \psi = 0.
\end{equation}
We also define $\norm{\cdot}_{V \times V}$ as the natural product norm
\begin{equation*}
    \norm{(\psi_1, \psi_2)}_{V \times V} \defeq \sqrt{\norm{\psi_1}_V^2 + \norm{\psi_2}_V^2}, \qquad \qquad (\psi_1, \psi_2) \in V \times V.
\end{equation*}
Notice that the choice of the space $V$ of admissible variations automatically excludes 
all non-trivial vertical translations, along which the functional is constant.
Then, assuming some additional regularity on the surface tension $\phi$, 
one gets as necessary conditions for the minimality of $(g_1, g_2)$ 
that the first and second variation of $J^\phi$ computed at $(g_1, g_2)$ must be zero and nonnegative, 
respectively, that is,
\begin{align*}
    \dfrac{d}{d t} J^\phi(g_1 + t \psi_1, g_2 + t \psi_2) \restrict{t = 0} = 0, & &
    \dfrac{d^2}{d t^2} J^\phi(g_1 + t \psi_1, g_2 + t \psi_2) \restrict{t = 0} \ge 0.
\end{align*}
The second variation of $J^\phi$ computed at $(g_1, g_2)$
can be explicitly expressed as a quadratic form denoted by $\partial^2 J^\phi (g_1, g_2) : V \times V \to \R$, 
which maps any $(\psi_1, \psi_2) \in V \times V$ to the 
corresponding second variation
\begin{equation*}
    \partial^2 J^\phi (g_1, g_2) \sb{\psi_1, \psi_2} 
    \defeq \dfrac{d^2}{d t^2} J^\phi(g_1 + t \psi_1, g_2 + t \psi_2) \restrict{t = 0},
\end{equation*}
which we compute explicitly in \cref{thm:3.secvar}.
A more delicate issue is whether the strict positivity of the second variation
for any nontrivial perturbation suffices to conclude local minimality for a critical strip. 
This turns out to be the case for the lamella further assuming
that $\phi$ is a uniformly elliptic surface tension (see \cref{defi:2.uniell}),
and that is the content of one of the main results of this work, proved in \cref{sec:5}.
Such result mirrors \cite[Theorem 1.1]{AFM13}, obtained by the authors in the isotropic case 
for a general strictly stable critical set.

\begin{figure}
    \centering
    \begin{tikzpicture}[scale = 1.7]
        \fill[gray!20] (- 1, - 1) -- (- 1, 1) -- (1, 1) -- (1, - 1) -- cycle;
        \fill[opacity = 0.5, pattern = crosshatch] (- 1, 0.4) to[out = 60, in = 150] (- 0.5, 0.6) to[out = - 30, in = 200] (0, 0)
        to[out = 20, in = 130] (0.5, 0.3) to[out = 310, in = 240] (1, 0.4) -- (1, - 0.3) to[out = 210, in = 40] 
        (0.5, - 0.8) to[out = 220, in = - 20] (0, - 0.4) to[out = 160, in = 50] (- 0.5, - 0.5) to[out = 230, in = 30] (- 1, - 0.3) -- cycle;
        \draw[dashed] (- 1, 0.4) to[out = 60, in = 150] (- 0.5, 0.6) to[out = - 30, in = 200] (0, 0)
        to[out = 20, in = 130] (0.5, 0.3) to[out = 310, in = 240] (1, 0.4) -- (1, - 0.3) to[out = 210, in = 40] 
        (0.5, - 0.8) to[out = 220, in = - 20] (0, - 0.4) to[out = 160, in = 50] (- 0.5, - 0.5) to[out = 230, in = 30] (- 1, - 0.3) -- cycle;
        \draw (- 1, - 1) -- (- 1, 1) -- (1, 1) -- (1, - 1) -- cycle;
        \draw[thick, ->](- 1, - 0.00001) -- (- 1, 0.00001);
        \draw[thick, ->](1, - 0.00001) -- (1, 0.00001);
        \draw[thick, ->](- 0.00001, - 1) -- (0.00001, - 1);
        \draw[thick, ->](- 0.00001, 1) -- (0.00001, 1);
        \draw(1.3, 0.8) node{$\T^2$};
        \draw(- 0.4, 0.7) node{$g_2$};
        \draw(- 0. 1, - 0.5) node{$g_1$};
    \end{tikzpicture}
    \hspace{1 cm}
    \begin{tikzpicture} [scale = 1.7]
        \fill[gray!20] (- 1, - 1) -- (- 1, 1) -- (1, 1) -- (1, - 1) -- cycle;
        \fill[opacity = 0.5, pattern = crosshatch] (- 1, - 0.4) rectangle (1, 0.4);
        \draw[dashed] (- 1, 0.4) -- (1, 0.4);
        \draw[dashed] (- 1, - 0.4) -- (1, - 0.4);
        \draw (- 1, - 1) -- (- 1, 1) -- (1, 1) -- (1, - 1) -- cycle;
        \draw[thick, ->](- 1, - 0.00001) -- (- 1, 0.00001);
        \draw[thick, ->](1, - 0.00001) -- (1, 0.00001);
        \draw[thick, ->](- 0.00001, - 1) -- (0.00001, - 1);
        \draw[thick, ->](- 0.00001, 1) -- (0.00001, 1);
        \draw(1.3, 0.8) node{$\T^2$};
        \draw(- 1.2, 0.4) node{$\ell_2$};
        \draw(- 1.2, - 0.4) node{$\ell_1$};
        \draw(0.5, - 0.1) node{$L$};
    \end{tikzpicture}
    \caption{A strip of volume $M \in (0, 1)$ in $\T^2$ for some couple $(g_1, g_2) \in \St_M$. 
    A lamella of volume $M$ is then a strip for two constant functions $\ell_1$ and $\ell_2$ with $(\ell_1, \ell_2) \in \St_M$.}
    \label{fig:2.striplam}
\end{figure}
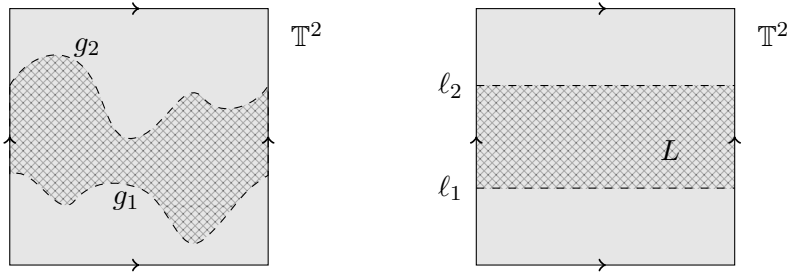

\begin{thm} \label{thm:2.strictminL1}

    Let $L$ be the lamella of volume $M \in (0, 1)$, and let $\phi : \R^n \to [0, + \infty)$ be a uniformly elliptic surface tension.
    Assume that the lamella is \emph{strictly stable} for $J^\phi$, that is,
    \begin{equation} \label{eq:2.strictstab}
        \partial^2 J^\phi(L)[\psi_1, \psi_2] > 0 \qquad \qquad 
        \text{for any } (\psi_1, \psi_2) \in V \times V \setminus \cb{(0, 0)}.
    \end{equation}
    Then there exists $\delta > 0$ with
    \begin{equation*}
        J^\phi(E) \ge J^\phi(L) + \frac{C_L^\phi}{16 (n - 1)} \big( d_{L^1}(E, L) \big)^2
    \end{equation*}
    for any $E \subset \T^n$ with $\abs{E} = M$ and $d_{L^1}(E, L) < \delta$, where
    \begin{equation*}
         C_L^\phi \defeq \inf \Big\{ \partial^2 J^\phi(L) [\psi_1, \psi_2] : 
        \psi_1, \psi_2 \in V, \ \norm{(\psi_1, \psi_2)}_{V \times V} = 1 \Big\} > 0.
    \end{equation*}
    
\end{thm}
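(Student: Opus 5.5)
The proof follows the two-step scheme of \cite{AFM13}. First, minimality among regular strips (the content of \cref{thm:4.C1infty}). Second, an improved convergence argument that reduces arbitrary $L^1$-close competitors to regular ones.

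\textbf{Step 1: minimality among strips close in $C^1$.} Using the explicit second variation computed in \cref{thm:3.secvar}, I would show there exists $\delta_1>0$ such that every strip $(g_1,g_2)\in\St_M$ with $\norm{g_i-\ell_i}_{C^1}<\delta_1$, suitably vertically translated, satisfies
\[
J^\phi(g_1,g_2)\ge J^\phi(L)+\tfrac{C_L^\phi}{4}\norm{(g_1-\ell_1,g_2-\ell_2)}_{V\times V}^2 .
\]
After a vertical translation we may assume $\int(g_1-\ell_1)=\int(g_2-\ell_2)=0$; this is possible since the volume constraint forces the two averages to coincide. Set $\psi_i=g_i-\ell_i\in V$ and consider $f(t)=J^\phi(\ell_1+t\psi_1,\ell_2+t\psi_2)$. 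The lamella is critical, so Taylor's formula gives $f(1)-f(0)=\int_0^1(1-t)f''(t)\,dt$. It then suffices to show $|f''(t)-f''(0)|\le \omega(\delta_1)\norm{(\psi_1,\psi_2)}^2_{V\times V}$ with $\omega(\delta_1)\to0$.

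For the perimeter part this holds because $\phi\in C^2$ away from the origin, so $\nabla^2\phi$ is uniformly continuous on the compact set of normals near $\pm e_n$. For the nonlocal part it follows from \eqref{eq:2.solestimates2} and the $C^1$ bounds on $v_E$. One then converts the $V$-norm lower bound into an $L^1$ bound: by Cauchy--Schwarz and \eqref{eq:2.poiwirt},
\[
|E\Delta L|\le\norm{\psi_1}_{L^1}+\norm{\psi_2}_{L^1}\le \sqrt{2(n-1)}\,\norm{(\psi_1,\psi_2)}_{V\times V},
\]
which yields the constant $C_L^\phi/(16(n-1))$ up to bookkeeping.

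\textbf{Step 2: reduction from $L^1$ to $C^1$ closeness.} I argue by contradiction. Suppose there are $E_h$ with $|E_h|=M$, $d_{L^1}(E_h,L)=\varepsilon_h\to0$ and $J^\phi(E_h)<J^\phi(L)+c\,\varepsilon_h^2$, where $c=C_L^\phi/(16(n-1))$. Following \cite{AFM13}, replace $E_h$ by a minimizer $F_h$ of the penalized problem
\[
\min\Big\{J^\phi(F)+\Lambda\big||F|-M\big|+\sqrt{(d_{L^1}(F,L)-\varepsilon_h)^2+\varepsilon_h}\ \Big\}.
\]
Any variant that keeps $F_h$ as an almost minimizer of $P^\phi$ with uniform constants would also work. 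The key points are the following.
\begin{itemize}
\item[(i)] $F_h\to L$ in $L^1$, by minimality and lower semicontinuity, since $L$ is the unique limit with energy at most $J^\phi(L)$ among nearby sets.
\item[(ii)] $F_h$ are $(\Lambda',r_0)$-almost minimizers of the anisotropic perimeter, uniformly in $h$. This uses the Lipschitz bound of \cref{lem:2.nonlocont} for $\F$, and an exact penalization of the volume constraint with $\Lambda$ large.
\end{itemize}
By the regularity theory for almost minimizers of uniformly elliptic anisotropic perimeters (Almgren, Schoen--Simon, Bombieri), together with the flatness of $\partial L$, we get $\partial F_h\to\partial L$ in $C^{1,\alpha}$. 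So $F_h$ is a strip $(g_1^h,g_2^h)$ with $g_i^h\to\ell_i$ in $C^1$, at least after adjusting the volume slightly by a small vertical dilation.

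Step 1 then gives $J^\phi(F_h)\ge J^\phi(L)+4c\,d_{L^1}(F_h,L)^2$. Comparing the penalized energies of $F_h$ and $E_h$ forces $d_{L^1}(F_h,L)$ to be comparable to $\varepsilon_h$ and $J^\phi(F_h)\le J^\phi(L)+c\,\varepsilon_h^2+o(\varepsilon_h^2)$, which contradicts this inequality.

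\textbf{Main obstacle.} I expect the hardest part to be Step 2: calibrating the penalization so that the $F_h$ are uniform almost minimizers while their energy comparison with $E_h$ still carries the quadratic information, and invoking the correct anisotropic $\varepsilon$-regularity theorem to upgrade $L^1$ convergence to $C^{1,\alpha}$ graph convergence over $\T^{n-1}$. Uniform ellipticity of $\phi$ is used precisely here, and in the uniform continuity of $\nabla^2\phi$ in Step 1.
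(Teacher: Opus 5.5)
Your overall scheme is the paper's (and \cite{AFM13}'s). First you get $C^1$-minimality of $L$ from the second variation and Taylor's formula (\cref{thm:4.C1infty}). Then you argue by contradiction, replacing $E_h$ by penalized minimizers $F_h$ that are uniform almost minimizers of $P^\phi$, hence $C^1$-close strips. The paper penalizes with $\Lambda_1 d_{L^1}(F,E_h)+\Lambda_2\abs{\abs{F}-M}$, whereas you use the \cite{AFM13}-style term $\sqrt{(d_{L^1}(F,L)-\varepsilon_h)^2+\varepsilon_h}$. Once $\abs{F_h}=M$ and $F_h$ is a $C^1$-close strip, your final comparison does work: it gives $\abs{d_{L^1}(F_h,L)-\varepsilon_h}=O(\varepsilon_h^{5/4})$. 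Note that your Step 1 yields the constant $2c$ rather than $4c$, which still suffices.

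The genuine gap is point (i). The limit $F_\infty$ of the $F_h$ minimizes $J^\phi(F)+\Lambda\abs{\abs{F}-M}+d_{L^1}(F,L)$, and you must show $F_\infty=L$ up to translation. Your reason, that $L$ is the unique limit with energy at most $J^\phi(L)$ among nearby sets, is circular, since it is the local minimality being proved. Moreover, $F_\infty$ is not known a priori to be near $L$. With coefficient $1$ on the distance term the claim is actually false in general. For small $M$ a rescaled Wulff shape $F$ of volume $M$ satisfies $J^\phi(F)+d_{L^1}(F,L)\le J^\phi(F)+2M<J^\phi(L)$, while $L$ is still strictly stable.

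What is missing is a large multiplier $\Lambda_1$ on the distance term, combined with a calibration showing that $L$ is a $\Lambda$-minimizer of the anisotropic perimeter. Take $x^\pm\in\partial W_\phi$ with $x^\pm_n=\pm\phi(\pm e_n)$, and set $T(x)=x^-+\varphi(x_n)(x^+-x^-)$ with $0\le\varphi\le1$, $\varphi(\ell_1)=0$, $\varphi(\ell_2)=1$. Then $T$ takes values in $\overline W_\phi$, so $T\cdot\nu\le\phi(\nu)$, with equality on $\partial L$. The divergence theorem gives $P^\phi_{\T^n}(L)-P^\phi_{\T^n}(F)\le C_4\abs{F\Delta L}$ (\cref{lem:4.boundper}). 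Together with \cref{lem:2.nonlocont}, this makes $L$ the unique minimizer of the limit functional once $\Lambda_1>C_4+\gamma C_3$.

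The same calibration, applied to $F_h\cap L^{\varepsilon}$ and $F_h\cup L^{\varepsilon}$ with $L^\varepsilon=(\ell_1-\varepsilon,\ell_2+\varepsilon)$, is what proves the exact volume penalization $\abs{F_h}=M$. You only assert this in (ii). Without it, your fallback of a small vertical dilation costs $O(\abs{\abs{F_h}-M})$, which you have no way to compare with $\varepsilon_h^2$.

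There are also two smaller points in Step 1. First, the Green's-kernel part of \eqref{eq:3.secvar} is not controlled by \eqref{eq:2.solestimates2}. You need an $H^{-1}$ bound on the difference of the boundary measures of \cref{rem:3.greenterm}, as in \cref{lem:4.secvarcont}. Second, $C_L^\phi>0$ does not follow from strict stability alone in infinite dimensions. It needs the compactness argument of \cref{prop:4.coerc}, where uniform ellipticity rules out minimizing sequences converging weakly to $0$.
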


For the statement and the proof of this result we consider the lamella, 
since we are interested in its minimality properties, however the strategy of \cite{AFM13}
could in principle be applied to any strictly stable critical set, also in the anisotropic case.
The strict stability condition \eqref{eq:2.strictstab} can be proved to hold for the lamella whenever the parameter
$\gamma > 0$ is chosen to be small enough, so that we get the corollary below.
For this kind of result we stress the dependence on $\gamma$ by writing $J_\gamma^\phi$.

\begin{cor} \label{cor:2.lamminL1}

    Let $L$ be the lamella of volume $M \in (0, 1)$, and let $\phi : \R^n \to [0, + \infty)$ be a uniformly elliptic surface tension.
    Then there exists $\gamma_0 > 0$ such that, given $\gamma \in [0, \gamma_0)$, 
    there exists $\delta > 0$ with
    \begin{equation*}
        J_\gamma^\phi(E) \ge J_\gamma^\phi(L) + \frac{C_L^\phi}{16 (n - 1)} \big( d_{L^1}(E, L) \big)^2
    \end{equation*}
    for any $E \subset \T^n$ with $\abs{E} = M$ and $d_{L^1}(E, L) < \delta$, where 
    \begin{equation*}
         C_L^\phi \defeq \inf \Big\{ \partial^2 J^\phi(L) [\psi_1, \psi_2] : 
        \psi_1, \psi_2 \in V, \ \norm{(\psi_1, \psi_2)}_{V \times V} = 1 \Big\} > 0.
    \end{equation*}
    
\end{cor}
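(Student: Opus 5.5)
The corollary will follow from \cref{thm:2.strictminL1} once we know that the lamella is strictly stable for $J_\gamma^\phi$ whenever $\gamma$ is small. The plan is therefore to prove a coercivity estimate of the form
\begin{equation*}
    \partial^2 J_\gamma^\phi(L)[\psi_1, \psi_2] \ge (\lambda - C\gamma) \norm{(\psi_1, \psi_2)}_{V \times V}^2,
\end{equation*}
and then take $\gamma_0 \defeq \lambda / C$. For $\gamma < \gamma_0$ this gives $C_L^\phi > 0$, hence \eqref{eq:2.strictstab}, and \cref{thm:2.strictminL1} applies directly.

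To get the estimate we use the explicit formula for the second variation from \cref{thm:3.secvar}. We split it into the perimeter part and the nonlocal part $\gamma\, \partial^2 \F(L)$.

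\emph{Perimeter part.} Each boundary component of $L$ is a graph $x_n = \ell_i + t\psi_i(x')$. The anisotropic perimeter of this graph is
\begin{equation*}
    \int_{\T^{n-1}} \phi\bigl(\mp t\nabla\psi_i, \pm 1\bigr)\, dx'.
\end{equation*}
Differentiating twice at $t = 0$ gives
\begin{equation*}
    \int_{\T^{n-1}} \nabla^2\phi(\pm e_n)\bigl[(\nabla\psi_i, 0), (\nabla\psi_i, 0)\bigr].
\end{equation*}
The vector $(\nabla\psi_i, 0)$ is orthogonal to $e_n$, so $\lambda$-ellipticity (\cref{defi:2.uniell}) bounds this from below by $\lambda \norm{\psi_i}_V^2$. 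The perimeter part is therefore at least $\lambda \norm{(\psi_1, \psi_2)}_{V \times V}^2$.

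\emph{Nonlocal part.} We expect the second variation of $\F$ at $L$ to have the usual structure. It consists of a positive $H^{-1}$-type term $\int |\nabla w|^2$, where $w$ solves $-\Delta w = \mu$ with $\mu$ the signed measure $2\psi_2\, \Ha^{n-1}\mres\{x_n = \ell_2\} - 2\psi_1\, \Ha^{n-1}\mres\{x_n = \ell_1\}$. It also contains boundary terms of the form
\begin{equation*}
    \int_{\{x_n = \ell_i\}} \partial_{\nu} v_L\, \psi_i^2.
\end{equation*}
The positive term can be discarded. The boundary terms are bounded using \eqref{eq:2.solestimates1}, which gives $\norm{v_L}_{C^1} \le 2C_2$, so $|\partial^2\F(L)| \le C \sum_i \norm{\psi_i}_{L^2}^2$. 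By Poincaré--Wirtinger \eqref{eq:2.poiwirt}, which applies since $\psi_i$ has zero mean, this is at most $C(n-1)\norm{(\psi_1, \psi_2)}_{V \times V}^2$. This yields the coercivity estimate.

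The main obstacle is getting the exact form of the nonlocal second variation, including any tangential or curvature contributions. For the flat lamella these vanish, since the curvature is zero and the flat boundary carries no anisotropic mean-curvature term. What matters is that every term is either nonnegative or controlled by $\norm{\psi_i}_{L^2}^2$ with a constant independent of $\gamma$. Once \cref{thm:3.secvar} is available this reduces to a uniform bound, and the constant $C_L^\phi$ in the conclusion is simply the one produced by \cref{thm:2.strictminL1}.
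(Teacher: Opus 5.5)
Your proposal is correct and takes essentially the paper's route: the corollary follows from \cref{thm:2.strictminL1} once strict stability is established as in \cref{prop:3.lamstab}, namely by discarding the nonnegative Green's term, bounding the perimeter Hessian below by $\lambda\norm{\nabla\psi_i}_{L^2(\T^{n-1})}^2$ via ellipticity, and absorbing the $\partial_n v_L$ term through Poincar\'e--Wirtinger. The only difference is that you bound $\partial_n v_L$ by the generic estimate $\norm{v_L}_{C^1(\T^n)}\le 2C_2$, which gives a valid but non-explicit $\gamma_0$ depending on $C_2(n)$; the paper instead uses the exact values $\partial_n v_L(x',\ell_1) = -\partial_n v_L(x',\ell_2) = M - M^2$ from \cref{rem:3.lamsol} and obtains the explicit threshold $\gamma_0 = \lambda/(4(M-M^2)(n-1))$.
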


The corollary is set out in detail in \cref{cor:5.lamminL1}. 
An approximation argument as in \cite{Bon13} allows to obtain a minimality result for the lamella 
also in the case of horizontally flat surface tensions (see \cref{defi:2.horiflat}).
In that case the lamella exhibits a very rigid behavior: it is an isolated local minimizer 
of $J_\gamma^\phi$ for any value of the parameter $\gamma \ge 0$.

\begin{thm} \label{thm:2.lamminL1hor}

    Let $L$ be the lamella of volume $M \in (0, 1)$, 
    and let $\phi : \R^n \to [0, + \infty)$ be a horizontally flat surface tension. 
    Then, $L$ is an \textit{isolated $L^1$-local minimizer} for $J_\gamma^\phi$ 
    for any $\gamma \ge 0$; more precisely, fixing $\gamma \ge 0$, there exists $\delta > 0$ such that
    \begin{equation*}
        J_\gamma^\phi(L) < J_\gamma^\phi(E)
    \end{equation*}
    for any set $E \subset \T^n$ with $\abs{E} = M$ and $0 < d_{L^1}(E, L) < \delta$.
    
\end{thm}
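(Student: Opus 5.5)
Following \cite{Bon13}, we approximate $\phi$ from below by a uniformly elliptic surface tension that agrees with $\phi$ on the horizontal normals, apply \cref{thm:2.strictminL1} to the approximant, and use the facets to make the surface term of the approximant dominate the nonlocal term uniformly in $\gamma$.

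\emph{Step 1: a comparison tension.} The facet hypothesis gives, for $\nu$ near $\pm e_n$, the linear lower bound
\begin{equation*}
\phi(\nu) \ge \phi(\pm e_n) + c\,\abs{\nu' }, \qquad \nu = (\nu', \nu_n),\ \pm\nu_n > 0,
\end{equation*}
for some $c > 0$. Indeed, $\phi(\nu) = \sup_{x \in W_\phi} x \cdot \nu$, and the top facet is $\{x_n = \phi(e_n)\}$ with an $(n-1)$-dimensional ball $B'(a, r)$ of horizontal directions. Testing with the point $x = (a + r\nu'/\abs{\nu'}, \phi(e_n))$ gives $\phi(\nu) \ge \phi(e_n)\nu_n + a\cdot\nu' + r\abs{\nu'}$. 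Since the lamella is translation invariant and $\phi$ enters only through $P^\phi$, we may subtract the linear functional $x \mapsto a\cdot x'$. Its integral over $\reb E$ against $\nu$ vanishes on the torus by the divergence theorem. After translating the Wulff shape we may therefore assume $a = 0$ for both facets (the two shifts in general differ; one handles this by the same null-Lagrangian trick separately, or just keeps the linear term, which integrates to zero).

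We then construct a $\lambda$-elliptic $\phi_\varepsilon \le \phi$ with $\phi_\varepsilon(\pm e_n) = \phi(\pm e_n)$. One way is to take the Wulff shape $W_\varepsilon \subset W_\phi$ to be a smooth uniformly convex body touching $\partial W_\phi$ exactly at the two facet centres, with very large curvature radius there. For such a body,
\begin{equation*}
\partial^2 J^{\phi_\varepsilon}_\gamma(L)[\psi_1, \psi_2] = \sum_{i} \nabla^2\phi_\varepsilon(\pm e_n)[\nabla\psi_i, \nabla\psi_i] - \gamma(\text{nonlocal terms}).
\end{equation*}
The nonlocal terms are bounded by $C\gamma\norm{(\psi_1,\psi_2)}^2_{V\times V}$ via \eqref{eq:2.poiwirt} and \eqref{eq:2.measolest}. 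Flatter curvature of $W_\varepsilon$ at $\pm\phi(\pm e_n)e_n$ makes $\nabla^2\phi_\varepsilon(\pm e_n)$ arbitrarily large, since $\nabla^2\phi$ at a normal is the inverse of the curvature of the Wulff shape. So for any fixed $\gamma$ we can choose $\phi_\varepsilon$ with $\nabla^2 \phi_\varepsilon(\pm e_n) \ge K\,\Id$ on $e_n^\perp$ and $K > C\gamma$, while keeping $W_\varepsilon \subset W_\phi$ thanks to the facet of positive measure. This is where horizontal flatness is essential. Hence $L$ is strictly stable for $J^{\phi_\varepsilon}_\gamma$.

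\emph{Step 2: conclusion.} By \cref{thm:2.strictminL1} applied to $\phi_\varepsilon$, there is $\delta > 0$ such that every $E$ with $\abs{E} = M$ and $0 < d_{L^1}(E, L) < \delta$ satisfies
\begin{equation*}
J^\phi_\gamma(E) \ge J^{\phi_\varepsilon}_\gamma(E) \ge J^{\phi_\varepsilon}_\gamma(L) + c\, d_{L^1}(E,L)^2 > J^{\phi_\varepsilon}_\gamma(L) = J^{\phi}_\gamma(L).
\end{equation*}
The last equality holds because the normals of $L$ are $\pm e_n$, where $\phi_\varepsilon = \phi$.

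The main obstacle is the construction in Step 1: producing a smooth, uniformly convex $W_\varepsilon \subset W_\phi$ that touches $W_\phi$ precisely at facet points with prescribed large $\nabla^2\phi_\varepsilon(\pm e_n)$. We must also handle the possibility that the two facet projections are not centred at the same $x'$, and check that the constant $C$ in the second variation bound does not depend on $\phi_\varepsilon$. A natural candidate is $W_\varepsilon$ equal to a suitably shifted intersection of two large balls of radius $R \sim K$, tangent to the facets, with a small ball, and then smoothed. Ball intersections are convex, and mollifying the support function preserves uniform convexity. The second variation formula of \cref{thm:3.secvar} should show that its nonlocal part is independent of $\phi$, which gives the uniformity of $C$.
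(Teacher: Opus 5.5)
\textbf{A genuine gap: the approximating surface tension is never constructed.}

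Your strategy is the paper's. You bound $\phi$ from below by a uniformly elliptic $\phi_\varepsilon$ with $\phi_\varepsilon(\pm e_n)=\phi(\pm e_n)$ and $\nabla^2_{x'}\phi_\varepsilon(\pm e_n)\ge K\Id$. The $\gamma$-dependent part of $\partial^2J^{\phi_\varepsilon}_\gamma(L)$ is bounded below by $-4\gamma(M-M^2)(n-1)\norm{(\psi_1,\psi_2)}_{V\times V}^2$ independently of $\phi_\varepsilon$, since the Green term is nonnegative. You then take $K$ large, apply \cref{thm:2.strictminL1}, and chain the inequalities. But you never produce $\phi_\varepsilon$, and the paper calls this step the heart of the proof.

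The constraints are rigid. If $\phi_\varepsilon(\pm e_n)<\phi(\pm e_n)$, then $J^{\phi_\varepsilon}_\gamma(L)<J^\phi_\gamma(L)$. If $\phi_\varepsilon\le\phi$ fails anywhere, you lose $J^\phi_\gamma(E)\ge J^{\phi_\varepsilon}_\gamma(E)$. So $W_{\phi_\varepsilon}$ must lie inside $W_\phi$, touch both facets exactly, be $C^2$ and uniformly convex, and be very flat at the contact points.

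Your candidate does not achieve this.
\begin{itemize}
\item For a contact point $p^+$ to remain a smooth point of curvature radius $R$ of the lens cut by a ball, $p^+$ must lie in the interior of that ball. Otherwise $p^+$ is a corner, the contact is lost, or the smaller sphere becomes the boundary there.
\item So the ball contains both $p^+$ and $p^-$, and its radius exceeds $\frac12(\phi(e_n)+\phi(-e_n))$. The resulting set then leaves $W_\phi$ whenever, for example, $W_\phi$ is a cylinder whose radius is less than half its height.
\item Cutting instead with the convex hull of two facet discs leaves edges and ruled lateral parts, which are neither $C^2$ nor uniformly convex.
\item Mollifying the support function in $\R^n$ does not even preserve $1$-homogeneity. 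It also moves $\phi_\varepsilon(\pm e_n)$ and breaks the inclusion, which you would then have to restore exactly while keeping the Hessian bound at $\pm e_n$.
\end{itemize}

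\textbf{How the paper fills the gap.} It uses an explicit formula. For $C=\cb{\abs{x'}<a,\ \abs{x_n}<b}$, with $\phi_C(x)=a\abs{x'}+b\abs{x_n}$, it takes
\begin{equation*}
\phi^\varepsilon_C(x)=\frac{a-b\varepsilon}{1-\varepsilon^2}\sqrt{\abs{x'}^2+\varepsilon^2x_n^2}+\frac{b-a\varepsilon}{1-\varepsilon^2}\sqrt{\varepsilon^2\abs{x'}^2+x_n^2}.
\end{equation*}
This is the support function of the Minkowski sum of a flat and a thin ellipsoid. It is $C^2$ and uniformly elliptic, and equals $b$ at $\pm e_n$. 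It is at most $\phi_C$ because $\sqrt{s^2+t^2}\le s+t$, and it satisfies $\nabla^2_{x'}\phi^\varepsilon_C(\pm e_n)\ge\frac{a-b\varepsilon}{\varepsilon}\Id$.

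Facets that are not vertically aligned are handled by the shear $A=\Id+\frac{(\tau',0)}{b}\otimes e_n$ and $\phi^\varepsilon=\phi^\varepsilon_C\circ A^*$. All three properties survive, because $A^*e_n=e_n$ and $A^*(v',0)-(v',0)$ is a multiple of $e_n\in\ker\nabla^2\phi^\varepsilon_C(\pm e_n)$. A general horizontally flat $\phi$ then follows by monotonicity of support functions under an inclusion $C'\subset W_\phi$ of an oblique cylinder sharing the horizontal faces. The null Lagrangian $x\mapsto z\cdot x$ is used only to move the centre of $C'$ to the origin.

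Contrary to your remark, this null Lagrangian cannot realign the two facets separately. It translates the whole Wulff shape, so both facets move by the same vector.
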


The proof is given in \cref{sec:6}.
Up to this point the value of the volume parameter $M$ is not relevant for our local minimality results,
however it is very natural for it to play a role when studying global minimality, as this is the case also in the isotropic setting. 
One expects that if $0 < M << 1$ (or $0 < 1 - M << 1$) the minimizer for the Wulff problem on the torus
\begin{equation} \label{eq:2.wulffper}
    \inf \cb{P_{\T^n}^\phi(E) : E \subset \R^n, \ \abs{E} = M}
\end{equation}
cannot be the lamella.
Conversely, taking $M$ close to $1/2$, we prove that in the planar case the horizontal lamella
is a minimizer for problem \eqref{eq:2.wulffper} for uniformly elliptic surface tensions
satisfying a certain minimality condition, which is a result of independent interest.
By following the same argument of \cite[Theorem 5.1]{AFM13} we show that then it is also a minimizer 
for problem \eqref{eq:1.anOKprob} for sufficiently small values of $\gamma$.

\begin{thm} \label{thm:2.globmin2}

    Let $n = 2$, and let $\phi$ be a uniformly elliptic surface tension.
    Assume also that
    \begin{equation} \label{eq:2.minilam}
        m_\phi \defeq \phi(e_2) + \phi(- e_2) < \phi(\nu) + \phi(- \nu)
    \end{equation}
    for any $\nu \in \S^1 \setminus \cb{\pm e_2}$.
    Then $m_\phi^2 < 2 \abs{W_\phi}$, and for $\frac{m_\phi^2}{4 \abs{W_\phi}} < M < 1 - \frac{m_\phi^2}{4 \abs{W_\phi}}$ 
    the horizontal lamella $L$ of volume $M$ is the unique global minimizer (up to translations)
    for problem \eqref{eq:1.anOKprob} for $\gamma > 0$ sufficiently small.
    
\end{thm}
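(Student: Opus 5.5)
The proof has two parts, following \cite[Theorem 5.1]{AFM13}: first the horizontal lamella is shown to be the unique minimizer of the periodic Wulff problem \eqref{eq:2.wulffper} in the stated mass range, and then this rigidity is transferred to $J_\gamma^\phi$ for small $\gamma$ by combining compactness with the local result \cref{cor:2.lamminL1}.

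\emph{Step 1: the periodic Wulff problem in $\T^2$.} Let $E\subset\T^2$ with $\abs{E}=M$ be a minimizer of $P^\phi_{\T^2}$ (one exists by the Direct Method). I would classify the competitors by the topology of the lifted set $E_{\R^2}$.

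\begin{itemize}
\item \emph{Bounded components.} Suppose the components of $E$ (or of $E^c$) lift to bounded sets in $\R^2$. Then the Wulff inequality gives
\begin{equation*}
P^\phi_{\T^2}(E)\ge 2\abs{W_\phi}^{1/2}\min\{M,1-M\}^{1/2}.
\end{equation*}
Here I use the subadditivity of $\sqrt{\cdot}$ across components, and the fact that the anisotropic perimeter of $E^c$ is the perimeter with respect to $\phi(-\cdot)$, whose Wulff shape $-W_\phi$ has the same area. In the stated range, $4\abs{W_\phi}\min\{M,1-M\}>m_\phi^2$, so this lower bound exceeds $m_\phi$, the energy of the horizontal lamella.
\item \emph{Unbounded components.} Otherwise some component of $\partial E$ wraps around the torus in a primitive direction $k\in\Z^2\setminus\{0\}$. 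Periodicity then forces at least two such boundary curves with opposite orientations. Projecting onto the direction $\nu\perp k$ and using convexity of $\phi$ (Jensen on each closed curve of given homology class) gives $P^\phi_{\T^2}(E)\ge \abs{k}\,(\phi(\nu)+\phi(-\nu))$. By \eqref{eq:2.minilam} this is at least $m_\phi$, with equality only when $k=\pm e_1$ and both curves are horizontal segments; in that case $E$ is a horizontal lamella.
\item \emph{The bound $m_\phi^2<2\abs{W_\phi}$.} This should follow by comparing $W_\phi$ with the parallelogram spanned by the horizontal facets of the supporting strip. Since $W_\phi$ lies in a strip of width $m_\phi$ in direction $e_2$, and \eqref{eq:2.minilam} says this is the minimal width of $W_\phi$, an inscribed-quadrilateral argument should give $\abs{W_\phi}>m_\phi^2/2$; I would check it via the support function. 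Uniform ellipticity is what makes the inequality strict.
\end{itemize}

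\emph{Step 2: passing to $J_\gamma^\phi$.} I argue by contradiction. Suppose there are $\gamma_h\to0$ and minimizers $E_h$ of $J_{\gamma_h}^\phi$ with $E_h\neq L$ up to translations. Since $J^\phi_{\gamma_h}(E_h)\le J^\phi_{\gamma_h}(L)$ and $\F\ge0$, the perimeters are bounded, so after translations and a subsequence $E_h\to E_\infty$. Lower semicontinuity of the anisotropic perimeter and \cref{lem:2.nonlocont} show that $E_\infty$ minimizes \eqref{eq:2.wulffper}, hence $E_\infty=L$ by Step 1. Thus $d_{L^1}(E_h,L)\to0$.

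To conclude I need a single $\delta$ such that \cref{cor:2.lamminL1} holds for all $\gamma<\gamma_0$; then for large $h$ it gives $J(E_h)>J(L)$ whenever $E_h\ne L$, a contradiction. The issue is that the corollary as stated chooses $\delta$ depending on $\gamma$, so I must check that $\delta$ can be taken uniform for $\gamma$ in a compact subset of $[0,\gamma_0)$. This should follow from the contradiction-and-regularity argument of \cref{sec:5}, since the almost-minimality constants are uniform in $\gamma$, and I would verify it there.

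The main obstacle is Step 1. Its hardest point is the rigorous lower bound in the wrapping case, namely excluding sets whose boundary mixes bounded and wrapping components, which requires the sharp isoperimetric-type inequality on the torus; the second is the geometric inequality $m_\phi^2<2\abs{W_\phi}$.
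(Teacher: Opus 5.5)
Your plan is sound and has the paper's architecture: classify minimizers of \eqref{eq:2.wulffper}, then transfer to $J_\gamma^\phi$ as in \cref{prop:7.stillmin}. Both halves are carried out differently, two ingredients are missing, and one detour is unnecessary.

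\textbf{Step 1: a different route.} The paper upgrades minimizers to $C^2$ and shows the anisotropic mean curvature is a constant $H$. It then splits by sign. For $H=0$ the boundary consists of parallel lines of length at least $1$, handled by \eqref{eq:2.minilam}. For $H>0$ there is a single strictly convex component, compared with $\mu_M W_\phi$. For $H<0$ the paper passes to $E^c$ and $-W_\phi$. Your dichotomy is also correct:
\begin{itemize}
\item if all boundary curves are contractible, $E$ or $E^c$ lies in disjoint discs, and the Wulff inequality with $\sum_i\sqrt{a_i}\ge\sqrt{\sum_i a_i}$ gives $P_{\T^2}^\phi(E)>m_\phi$;
\item otherwise there are two oppositely oriented curves in a primitive class $\pm k$, and $\int\phi(\nu_E)\ge\phi(\int\nu_E)$ on each gives $P_{\T^2}^\phi(E)\ge\abs{k}(\phi(\nu)+\phi(-\nu))\ge m_\phi$.
\end{itemize}
This is more economical on the open range of the statement. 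It needs only $C^1$ boundaries, no Euler--Lagrange equation, and no connectedness argument. What you give up is a description of minimizers outside the range, for which the paper also needs the containment argument with $Q'$. The ``mixed'' case you single out as the main obstacle is harmless in your own scheme. Contractible curves only add positive perimeter to the Jensen bound for the wrapping pair. Equality in Jensen forces $\nu_E$ to be constant, by the strict convexity that uniform ellipticity provides. So no sharp isoperimetric inequality on the torus is needed.

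\textbf{Missing ingredients.} First, treating components of $\partial E$ as finitely many disjoint Jordan curves, with contractible ones bounding discs, requires regularity of minimizers, which you never invoke. The paper gets $C^1$ from \cite{ANP02}, and that is enough for you.

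Second, consider the bound $m_\phi^2<2\abs{W_\phi}$; note that $W_\phi$ has no facets here. The quadrilateral on the four touching points has area $\frac12\abs{\det(x^+-x^-,x^r-x^l)}$. In general this is not $\ge m_\phi K_\phi/2$, where $K_\phi=\phi(e_1)+\phi(-e_1)$. The missing observation is that $\Phi(t)=\phi\bigl((t,1)/\sqrt{1+t^2}\bigr)+\phi\bigl(-(t,1)/\sqrt{1+t^2}\bigr)$ is minimal at $t=0$. This gives $\partial_1\phi(e_2)=\partial_1\phi(-e_2)$, so $x^\pm=\nabla\phi(\pm e_2)$ lie on one vertical line. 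The area is then exactly $m_\phi K_\phi/2$. Strictness comes from $K_\phi>m_\phi$, i.e.\ \eqref{eq:2.minilam} with $\nu=e_1$, not from ellipticity.

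\textbf{Step 2: the uniform $\delta$ is unnecessary.} A uniform-in-$\gamma$ version of \cref{cor:2.lamminL1} is true: the constants of \cref{sec:4} and \cref{sec:5} are uniform for $\gamma$ in compact subsets of $[0,\gamma_0)$. But you do not need it. Since $L$ is the unique minimizer of \eqref{eq:2.wulffper}, $E_h\neq L$ gives $P_{\T^2}^\phi(E_h)>P_{\T^2}^\phi(L)$. Then $J_{\gamma_h}^\phi(E_h)\le J_{\gamma_h}^\phi(L)$ forces $\F(E_h)<\F(L)$, so $\gamma\mapsto J_\gamma^\phi(L)-J_\gamma^\phi(E_h)$ is increasing. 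Apply the corollary at one fixed $\overline\gamma\in(0,\gamma_0)$ with its own $\delta$. It gives $J_{\overline\gamma}^\phi(L)<J_{\overline\gamma}^\phi(E_h)$, hence $J_{\gamma_h}^\phi(L)<J_{\gamma_h}^\phi(E_h)$ once $\gamma_h<\overline\gamma$ and $d_{L^1}(E_h,L)<\delta$. This is exactly the paper's argument.
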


However, in higher dimension, the same question stays open even in the isotropic case, 
with the only exception of $n = 3$, proved in \cite[Corollary 5.5]{AFM13} without an explicit minimality range.

\section{First and second variations} \label{sec:3}

In this section we compute the first and second variation formulae of $J^\phi$ at $(g_1, g_2) \in \St_M$, 
where $\St_M$ is the class of strips of volume $M$ defined in \eqref{eq:2.strip}.
In order to do that we consider local deformations as described in \cref{sec:2}.
We note that these computations were obtained more in general for the isotropic case in \cite{AFM13, CS07} (see also \cite{Mur02}).

Recalling the identification of a pair $(g_1, g_2) \in \St_M$ with the corresponding set
defined in \eqref{eq:2.stripdef}, we clearly have $\partial (g_1, g_2) = \Gamma_{g_1} \sqcup \Gamma_{g_2}$,
where $\Gamma_{g_i}$, $i = 1, 2$, is the graph of $g_i$, i.e.,
\begin{equation*} 
    \Gamma_{g_i} = \Big\{ (x', x_n) \in \T^n : x_n = g_i(x') \Big\}.
\end{equation*}
In particular the Area Formula yields
\begin{equation*}
    \int_{\partial (g_1, g_2)} \varphi \de \Ha^{n - 1}
    = \sum_{i = 1}^2 \int_{\Gamma_{g_i}} \varphi \de \Ha^{n - 1}
    = \sum_{i = 1}^2 \int_{\T^{n - 1}} \varphi \big( x', g_i(x') \big) \ \sqrt{1 + \abs{\nabla g_i(x')}^2} \de x'
\end{equation*}
for any nonnegative Borel function $\varphi : \partial (g_1, g_2) \to [0, + \infty]$.
We also have that the outer normal vector to $(g_1, g_2)$ is given by
\begin{equation*}
    \nu_{(g_1, g_2)}(x', x_n) =
    \begin{cases}
        \dfrac{\big( \nabla g_1(x'), - 1 \big)}{\sqrt{1 + \abs{\nabla g_1(x')}^2}} 
        \qquad \text{ on } \Gamma_{g_1}, \\
        \dfrac{\big( - \nabla g_2(x'), 1 \big)}{\sqrt{1 + \abs{\nabla g_2(x')}^2}} 
        \qquad \text{ on } \Gamma_{g_2}.
    \end{cases}
\end{equation*}
As a consequence, the anisotropic perimeter of a strip $(g_1, g_2) \in \St_M$ is just
\begin{equation} \label{eq:3.aniperstrip}
    P_{\T^n}^\phi(g_1, g_2) 
    = \int_{\T^{n - 1}} \Big( \phi \big( \nabla g_1(x'), - 1 \big) 
    + \phi \big( - \nabla g_2(x'), 1 \big) \Big) \de x'.
\end{equation}

We also give the following definition in order to ease the notation.

\begin{defi}

    Let $\psi_1, \psi_2 \in V$. 
    Given $\abs{t}$ small enough, so that $0 < g_1 + t \psi_1 < g_2 + t \psi_2 < 1$, denote
    \begin{equation*}
        (g_1, g_2)_t \defeq (g_1 + t \psi_1, g_2 + t \psi_2) 
        = \Big\{ (x', x_n) \in \T^n : g_1(x') + t \psi_1(x') < x_n < g_2(x') + t \psi_2(x') \Big\}.
    \end{equation*}
    The family of sets $\cb{(g_1, g_2)_t}_t \subset \St_M$ is said to be 
    a \textit{$(\psi_1, \psi_2)$-deformation} of $(g_1, g_2)$.
    
\end{defi}

This type of deformations allows us to compute the first and the second variation formulae for the anisotropic perimeter
(see \cref{fig:3.deformation}).

\begin{prop} \label{prop:3.anivar}

    Let $\psi_1, \psi_2 \in V$. 
    Consider the $(\psi_1, \psi_2)$-deformation $\cb{(g_1, g_2)_t}_t$ of $(g_1, g_2) \in \St_M$.
    For any surface tension $\phi \in C^2(\R^{n} \setminus \cb{0})$,
    \begin{align*}
        \dfrac{d}{d t} P_{\T^n}^\phi \big( (g_1, g_2)_t \big) \restrict{t = 0} 
        &= \sum_{i = 1}^2 (- 1)^{i - 1} 
        \int_{\T^{n - 1}} \nabla \psi_i(x') \cdot 
        \nabla_{x'} \phi \rb{(- 1)^i \big( - \nabla g_i(x'), 1 \big)} \de x', \\
        \dfrac{d^2}{d t^2} P_{\T^n}^\phi \big( (g_1, g_2)_t \big) \restrict{t = 0} 
        &= \sum_{i = 1}^2 
        \int_{\T^{n - 1}} \nabla_{x'}^2 \phi\rb{(- 1)^i \big( - \nabla g_i(x'), 1 \big)} 
        \sb{\nabla \psi_i(x'), \nabla \psi_i(x')} \de x'.
    \end{align*}
    
\end{prop}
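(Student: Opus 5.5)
The plan is to start from the explicit formula \eqref{eq:3.aniperstrip} for the anisotropic perimeter of a strip and differentiate under the integral sign in $t$. For the deformed strip $(g_1,g_2)_t$ we have
\begin{equation*}
    P_{\T^n}^\phi\big((g_1,g_2)_t\big) = \int_{\T^{n-1}} \Big( \phi\big(\nabla g_1 + t\nabla\psi_1, -1\big) + \phi\big(-\nabla g_2 - t\nabla\psi_2, 1\big) \Big) \de x'.
\end{equation*}
The two integrands can be written in unified form: for $i=1,2$, the $i$-th term equals $\phi\big((-1)^i(-\nabla g_i - t\nabla\psi_i, 1)\big)$. Indeed, for $i=1$ this is $\phi(\nabla g_1 + t\nabla\psi_1, -1)$, and for $i=2$ it is $\phi(-\nabla g_2 - t\nabla\psi_2, 1)$.

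Next I justify differentiation under the integral. Since $g_i,\psi_i\in C^1(\T^{n-1})$ and $\T^{n-1}$ is compact, the arguments $(-1)^i(-\nabla g_i - t\nabla\psi_i,1)$ lie in a compact subset of $\R^n$. This set avoids the origin, because the last coordinate is $\pm1$. As $\phi\in C^2(\R^n\setminus\{0\})$, the first and second $t$-derivatives of the integrands are continuous and uniformly bounded on $\T^{n-1}\times(-t_0,t_0)$. Dominated convergence therefore allows two differentiations under the integral sign.

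The derivatives then follow from the chain rule. The map $t\mapsto (-1)^i(-\nabla g_i - t\nabla\psi_i, 1)$ is affine in $t$, with velocity $(-1)^{i+1}(\nabla\psi_i,0)$. Its last component is constant, so only the $x'$-components of $\nabla\phi$ and $\nabla^2\phi$ enter.
\begin{itemize}
    \item The first derivative at $t=0$ equals $(-1)^{i-1}\nabla\psi_i\cdot\nabla_{x'}\phi\big((-1)^i(-\nabla g_i,1)\big)$, which gives the stated formula after summing over $i$.
    \item The second derivative equals $\nabla^2_{x'}\phi(\cdots)[\nabla\psi_i,\nabla\psi_i]$, since the squared sign $((-1)^{i+1})^2=1$ disappears and there is no second-order term, the path being affine.
\end{itemize}

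No real obstacle is expected. The only points needing care are the sign bookkeeping in the unified expression and the observation that the arguments stay away from the origin, where $\phi$ may fail to be differentiable. The volume constraint, i.e.\ $\int\psi_i=0$, is not needed for this computation; it only ensures that the deformed sets remain in $\St_M$.
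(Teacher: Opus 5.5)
Your proposal is correct and is essentially the paper's argument. The paper also starts from \eqref{eq:3.aniperstrip}, expands each integrand $t\mapsto\phi\big((-1)^i(-\nabla g_i - t\nabla\psi_i,1)\big)$ to second order in $t$ and integrates over $\T^{n-1}$; this amounts to your chain-rule computation along the affine path. Your observation that the arguments stay in a compact set away from the origin (last coordinate $\pm1$) also supplies the uniformity of the remainder, i.e.\ the legitimacy of differentiating twice under the integral, which the paper leaves implicit.
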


\begin{proof}

    By \eqref{eq:3.aniperstrip} we know that
    \begin{equation*}
        P_{\T^n}^\phi((g_1, g_2)_t) 
        = \int_{\T^{n - 1}} \Big( \phi \big( \nabla g_1(x') + t \nabla \psi_1(x'), - 1 \big) 
        + \phi \big( - \nabla g_2(x') - t \nabla \psi_2(x'), 1 \big) \Big) \de x'.
    \end{equation*}
    The Taylor expansion of $t \mapsto \phi(\nabla g_1(x') + t \nabla \psi_1(x'), - 1)$ at $t = 0$ gives
    \begin{multline*}
        \phi \big( \nabla g_1(x') + t \nabla \psi_1(x'), - 1 \big) 
        = \phi \big( \nabla g_1(x'), - 1 \big)
        + t \nabla \psi_1(x') \cdot \nabla_{x'} \phi \big( \nabla g_1(x'), - 1 \big) \\
        + \frac{t^2}{2} \nabla_{x'}^2 \phi \big( \nabla g_1(x'), - 1 \big) 
        \sb{\nabla \psi_1(x'), \nabla \psi_1(x')} + o(t^2),
    \end{multline*}
    and similarly
    \begin{multline*}
        \phi \big( - \nabla g_2(x') - t \nabla \psi_2(x'), 1 \big) 
        = \phi \big( - \nabla g_2(x'), 1 \big) 
        - t \nabla \psi_2(x') \cdot \nabla_{x'} \phi \big( - \nabla g_2(x'), 1 \big) \\
        + \frac{t^2}{2} \nabla_{x'}^2 \phi \big( - \nabla g_2(x'), 1 \big) 
        \sb{\nabla \psi_2(x'), \nabla \psi_2(x')} + o(t^2).
    \end{multline*}
    Integrating on $\T^{n - 1}$,
    \begin{multline*}
        P_{\T^n}^\phi \big( (g_1, g_2)_t \big)
        = P_{\T^n}^\phi (g_1, g_2)
        - t \sum_{i = 1}^2 (- 1)^i 
        \int_{\T^{n - 1}} \nabla \psi_i(x') \cdot 
        \nabla_{x'} \phi \rb{(- 1)^i \big( - \nabla g_i(x'), 1 \big)} \de x' \\
        + \frac{t^2}{2} \sum_{i = 1}^2 \int_{\T^{n - 1}} \nabla_{x'}^2 \phi 
        \rb{(- 1)^i \big(- \nabla g_i(x'), 1 \big)} 
        \sb{\nabla \psi_i(x'), \nabla \psi_i(x')} \de x' + o(t^2),
    \end{multline*}
    yielding the conclusion.
    \qedhere
    
\end{proof}

\begin{figure}
    \centering
    \begin{tikzpicture}[scale = 1.7]
        \fill[gray!20] (- 1, - 1) -- (- 1, 1) -- (1, 1) -- (1, - 1) -- cycle;
        \fill[opacity = 0.5, pattern = crosshatch] (- 1, 0.4) to[out = 60, in = 150] (- 0.5, 0.6) to[out = - 30, in = 200] (0, 0)
        to[out = 20, in = 130] (0.5, 0.3) to[out = 310, in = 240] (1, 0.4) -- (1, - 0.3) to[out = 210, in = 40] 
        (0.5, - 0.8) to[out = 220, in = - 20] (0, - 0.4) to[out = 160, in = 50] (- 0.5, - 0.5) to[out = 230, in = 30] (- 1, - 0.3) -- cycle;
        \draw[dashed] (- 1, 0.4) to[out = 60, in = 150] (- 0.5, 0.6) to[out = - 30, in = 200] (0, 0)
        to[out = 20, in = 130] (0.5, 0.3) to[out = 310, in = 240] (1, 0.4) -- (1, - 0.3) to[out = 210, in = 40] 
        (0.5, - 0.8) to[out = 220, in = - 20] (0, - 0.4) to[out = 160, in = 50] (- 0.5, - 0.5) to[out = 230, in = 30] (- 1, - 0.3) -- cycle;
        \draw[dashdotted] (- 0.5, 0.6) to[out = - 30, in = 200] (0, 0.1) to[out = 20, in = 130] (0.5, 0.2) to[out = 310, in = 240] (1, 0.4);
        \draw[dashdotted] (0.5, - 0.8) to[out = 220, in = - 20] (0, - 0.3) to[out = 160, in = 50] (- 0.5, - 0.6) to[out = 230, in = 30] (- 1, - 0.3);
        \draw[dashdotted] (- 0.5, 0.6) to[out = - 30, in = 200] (0, - 0.1) to[out = 20, in = 130] (0.5, 0.4) to[out = 310, in = 240] (1, 0.4);
        \draw[dashdotted] (0.5, - 0.8) to[out = 220, in = - 20] (0, - 0.5) to[out = 160, in = 50] (- 0.5, - 0.4) to[out = 230, in = 30] (- 1, - 0.3);
        \draw (- 1, - 1) -- (- 1, 1) -- (1, 1) -- (1, - 1) -- cycle;
        \draw[thick, ->](- 1, - 0.00001) -- (- 1, 0.00001);
        \draw[thick, ->](1, - 0.00001) -- (1, 0.00001);
        \draw[thick, ->](- 0.00001, - 1) -- (0.00001, - 1);
        \draw[thick, ->](- 0.00001, 1) -- (0.00001, 1);
        \draw(1.3, 0.8) node{$\T^2$};
        \draw(- 0.4, 0.7) node{$g_2$};
        \draw(0.7, - 0.8) node{$g_1$};
    \end{tikzpicture}
    \caption{A strip of volume $M \in (0, 1)$ in $\T^2$ for some couple $(g_1, g_2) \in \St_M$ 
    and its $(\psi_1, \psi_2)$-deformation $\cb{(g_1, g_2)_t}_t$ for some $\psi_1, \psi_2 \in V$.
    By definition the volume of the deformed sets is again $M$ and one has $(g_1, g_2)_0 = (g_1, g_2)$.}
    \label{fig:3.deformation}
\end{figure}
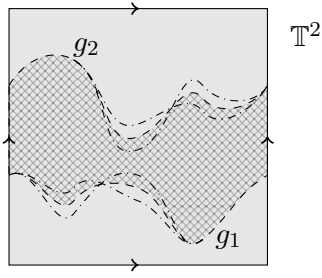
    
We now obtain the analogous formulae for the nonlocal term $\F$.
The following proposition can be found in \cite{CS07},
where it is stated for more general sets,
but we choose nonetheless to give the proof here for the reader's convenience.

\begin{prop} \label{prop:3.nonvar}

    Let $\psi_1, \psi_2 \in V$. 
    Consider the $(\psi_1, \psi_2)$-deformation $\cb{(g_1, g_2)_t}_t$ of $(g_1, g_2) \in \St_M$.
    Then,
    \begin{align*}
        \dfrac{d}{d t} \F \big( (g_1, g_2)_t \big) \restrict{t = 0} 
        &= 4 \sum_{i = 1}^2 (- 1)^i \int_{\T^{n - 1}} v_{(g_1, g_2)} \big (x', g_i(x') \big) \psi_i(x') \de x', \\
        \dfrac{d^2}{d t^2} \F \big( (g_1, g_2)_t \big) \restrict{t = 0} 
        &= \begin{multlined}[t] 8 \sum_{i, j = 1}^2 (- 1)^{i + j} 
        \int_{\T^{n - 1}} \! \rb{\int_{\T^{n - 1}} 
        \! \! G \Big( \big( x', g_i(x') \big), \big( y', g_j(y') \big) \Big) 
        \psi_i(x') \psi_j(y') \de x'} \de y' \! \! \! \! \\
        + 4 \sum_{i = 1}^2 (- 1)^i 
        \int_{\T^{n - 1}} \partial_n v_{(g_1, g_2)} \big( x', g_i(x') \big) \psi_i(x')^2 \de x',
        \end{multlined}
    \end{align*}
    where $v_{(g_1, g_2)}$ is the solution to \eqref{eq:2.solution} for the set $(g_1, g_2)$.
    
\end{prop}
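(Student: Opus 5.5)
We will write $E_t \defeq (g_1, g_2)_t$ and $u_t \defeq u_{E_t}$, and express everything through the identity $\F(E_t) = \int_{\T^n}\int_{\T^n} G(x,y) u_t(x) u_t(y) \de x \de y$. The basic observation is that $u_t - u_0 = 2(\1_{E_t} - \1_{E_0})$. In vertical coordinates this difference is supported in the thin regions between the graphs of $g_i$ and $g_i + t\psi_i$. Concretely,
\begin{equation*}
    u_t(x', x_n) - u_0(x', x_n) = 2 \sum_{i=1}^2 (-1)^i \sigma_i(t, x')\,\1_{I_i(t, x')}(x_n),
\end{equation*}
where $I_i(t,x')$ is the interval with endpoints $g_i(x')$ and $g_i(x') + t\psi_i(x')$. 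The sign is $\sigma_i = -\operatorname{sgn}(t\psi_i)$ for $i=1$ and $+\operatorname{sgn}(t\psi_i)$ for $i=2$, combined with the $(-1)^i$ above: moving the upper graph up adds mass, and moving the lower graph up removes it. Since $G$ is symmetric, expanding the quadratic form gives
\begin{equation*}
    \F(E_t) - \F(E_0) = 2\int_{\T^n} v_{E_0} (u_t - u_0) + \int_{\T^n}\int_{\T^n} G(x,y)(u_t-u_0)(x)(u_t-u_0)(y) \de x \de y,
\end{equation*}
using $v_{E_0}(x) = \int G(x,y)u_0(y)\de y$. So the whole problem reduces to expanding these two terms up to order $t^2$.

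\textbf{Linear term.} By Fubini, $\int_{\T^n} v_{E_0}(u_t-u_0)$ is a sum over $i$ of integrals over $x'$ of $\pm 2\int_{g_i}^{g_i+t\psi_i} v_{E_0}(x',s)\de s$, where the orientation of the inner integral absorbs the sign $\sigma_i$. By \cref{rem:2.solreg} we have $v_{E_0}\in C^1(\T^n)$, so a Taylor expansion in $s$ gives
\begin{equation*}
    \int_{g_i}^{g_i+t\psi_i} v_{E_0}(x',s)\de s = t\psi_i\, v_{E_0}(x',g_i) + \tfrac{t^2}{2}\psi_i^2\,\partial_n v_{E_0}(x',g_i) + o(t^2),
\end{equation*}
with the remainder uniform in $x'$ by uniform continuity of $\partial_n v_{E_0}$. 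Multiplying by the prefactor $2\cdot 2 = 4$ produces exactly the first variation $4\sum_i(-1)^i\int v\,\psi_i$, and after taking the second derivative it also produces the term $4\sum_i(-1)^i\int \partial_n v\,\psi_i^2$. We will double-check the sign bookkeeping: the $i=2$ (upper) boundary gets sign $+$, which matches $(-1)^2$, so the conventions agree.

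\textbf{Quadratic term.} The function $u_t - u_0$ is concentrated on slabs of thickness $|t\psi_i|$. Dividing by $t$, we get $(u_t-u_0)/t \to 2\sum_i(-1)^i\psi_i\,\Ha^{n-1}\mres$ (pushforward onto the graphs), as measures with density $\psi_i(x')\de x'$ on $\Gamma_{g_i}$. We must then show that
\begin{equation*}
    t^{-2}\iint G\,(u_t-u_0)\otimes(u_t-u_0) \to 4\sum_{i,j}(-1)^{i+j}\iint G\big((x',g_i(x')),(y',g_j(y'))\big)\psi_i(x')\psi_j(y')\de x'\de y'.
\end{equation*}
This is the main obstacle, because $G$ is singular on the diagonal: $G(x,y)\sim |x-y|^{2-n}$, or $\log$ when $n=2$. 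Weak convergence of the measures alone is therefore not enough. Our plan is to write the double integral via Fubini as
\begin{equation*}
    \iint_{\T^{n-1}\times\T^{n-1}} \Big(\tfrac{1}{t^2}\int_{I_i(t,x')}\int_{I_j(t,y')} G\big((x',s),(y',r)\big)\de r\de s\Big)\de x'\de y'.
\end{equation*}
The inner averages converge pointwise off the diagonal $x'=y'$ to $\psi_i\psi_j\, G(\ldots)$. To pass to the limit we will dominate them by $C\|\psi\|_\infty^2(|x'-y'|^{2-n}+|\log|x'-y'||+1)$, using $|G(x,y)|\le C(|x-y|^{2-n}+|\log|x-y||+1)$ and $|(x',s)-(y',r)|\ge|x'-y'|$. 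This dominating function is integrable on $\T^{n-1}\times\T^{n-1}$ since $n-2<n-1$. Dominated convergence then gives the limit, with the remaining $o(t^2)$ error uniform.

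Putting the pieces together, $\F(E_t)=\F(E_0)+tA+\tfrac{t^2}{2}B+o(t^2)$, where
\begin{equation*}
    B = 8\sum_{i,j}(-1)^{i+j}\iint G\,\psi_i\psi_j + 4\sum_i(-1)^i\int\partial_n v\,\psi_i^2 .
\end{equation*}
Here the factor $8$ comes from $2\cdot 4$ in the quadratic term, and the factor $4$ from the $t^2/2$ coefficient of the linear term. This is exactly the statement of \cref{prop:3.nonvar}. Strictly, the expansion yields second-order Taylor coefficients; to obtain honest derivatives at $t=0$ we note that the same computation can be done around any $t_0$ (the deformed set is again a strip in $\St_M$). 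Alternatively, differentiability of $t\mapsto\F(E_t)$ is standard, and then the two-term expansion identifies the derivatives.
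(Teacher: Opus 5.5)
Your argument is correct in substance but follows a different route from the paper. The paper differentiates $\F(E_t)=\int_{\T^n}\abs{\nabla v_{E_t}}^2$ directly. It uses the equation to rewrite the derivative as $2\int_{\T^n}(u_{E_t}-m_{E_t})\,\partial_t v_{E_t}$, and computes $\partial_t v_{E_t}$ from $v_{E_t}=2\int_{E_t}G(x,\cdot)$. It then gets the second variation from the group property $(E_t)_s=E_{t+s}$: the first-variation formula written at $E_t$ is differentiated once more by the chain rule along the moving graph. There the $t$-derivative of $v_{E_t}$ gives the Green term and the $x_n$-derivative gives the $\partial_n v$ term.

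You instead expand the quadratic form around $u_{E_0}$, so the two pieces of the second variation come from different places. The linear part $2\int v_{E_0}(u_t-u_0)$ yields the first variation and, through the $t^2$ term of the slab integrals, the $\partial_n v$ term. The self-interaction $\iint G\,(u_t-u_0)\otimes(u_t-u_0)$ yields the Green term. Your version is more careful exactly where the paper is formal, namely differentiating $v_{E_t}$ under the integral and evaluating it on $\Gamma_{g_i}$, where the kernel is singular. Your domination of the inner averages by a multiple of $\abs{x'-y'}^{2-n}$ (or $\abs{\log\abs{x'-y'}}$ for $n=2$), based on $\abs{(x',s)-(y',r)}\ge\abs{x'-y'}$ and integrability in dimension $n-1$, settles this cleanly. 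The paper's route, in turn, produces a formula for $\frac{d}{dt}\F(E_t)$ at every $t$ and differentiates it, so once justified it delivers a genuine second derivative.

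That last point is where your write-up needs one more step. An expansion $f(t)=f(0)+tA+\frac{t^2}{2}B+o(t^2)$ does not imply that $f''(0)$ exists (consider $t^3\sin(1/t)$). Calling differentiability standard assumes part of the claim. Repeating the computation at every small $t_0$ does give $f'(t_0)=A(t_0)$, since a first-order expansion is equivalent to differentiability. At second order, however, it only gives a second Peano derivative $B(t_0)$.

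To close this, note that $t_0\mapsto B(t_0)$ is continuous. The Green term is handled by the same dominated convergence, and the $\partial_n v$ term because $v_{E_{t}}\to v_{E_{t_0}}$ in $C^1(\T^n)$ by \eqref{eq:2.solestimates2}. Then $h(t)\defeq f(t)-f(0)-tA(0)-\int_0^t(t-\tau)B(\tau)\de\tau$ is continuous, has vanishing second symmetric derivative at every point, and satisfies $h(0)=h'(0)=0$. By the classical Schwarz argument $h\equiv0$, so $f\in C^2$ with $f''=B$. This is also what the integral-remainder Taylor formula in the proof of \cref{thm:4.C1infty} requires. Alternatively, you can expand $A(t)-A(0)$ to first order with the same two tools, which is the paper's second step made rigorous.

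A minor point: your description of $\sigma_i$ is ambiguous. With the prefactor $(-1)^i$ already present, one needs $\sigma_i=\operatorname{sgn}(t\psi_i)$ for both $i$. Your later oriented integrals $\int_{g_i}^{g_i+t\psi_i}$ handle the sign correctly.
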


\begin{proof}

    By definition of $\F$ we have that
    \begin{align*}
        \dfrac{d}{d t} \F \big( (g_1, g_2)_t \big) 
        &= \dfrac{d}{d t} \rb{\int_{\T^n} \abs{\nabla v_{(g_1, g_2)_t}}^2}
        = 2 \int_{\T^n} \nabla v_{(g_1, g_2)_t} \cdot \nabla \rb{\partial_t v_{(g_1, g_2)_t}} \\
        &= - 2 \int_{\T^n} \Delta v_{(g_1, g_2)_t} \rb{\partial_t v_{(g_1, g_2)_t}}
        = 2 \int_{\T^n} (u_{(g_1, g_2)_t} - m_{(g_1, g_2)_t}) \rb{\partial_t v_{(g_1, g_2)_t}}.
    \end{align*}
    We may write $v_{(g_1, g_2)_t}(x)$ as
    \begin{align*}
        v_{(g_1, g_2)_t}(x) &= \int_{\T^n} G(x, y) u_{(g_1, g_2)_t}(y) \de y \\
        &= \int_{(g_1, g_2)_t} G(x, y) \de y - \int_{((g_1, g_2)_t)^c} G(x, y) \de y 
        = 2 \int_{(g_1, g_2)_t} G(x, y) \de y,
    \end{align*}
    so that the derivative of $v_{(g_1, g_2)_t}$ with respect to $t$ computed at $x \in \T^n$ and evaluated at $t = 0$ takes the form
    \begin{equation} \label{eq:2.solder}
        \begin{split}
            \big( \partial_t v_{(g_1, g_2)_t}(x) \big) \restrict{t = 0}
            &= 2 \partial_t \rb{\int_{(g_1, g_2)_t} G_x(y) \de y} \restrict{t = 0} \\
            &= \partial_t \rb{\int_{\T^{n - 1}} 
            \rb{\int_{g_1(y') + t \psi_1(y')}^{g_2(y') + t \psi_2(y')} G_x(y', y_n) \de y_n} \de y'} 
            \restrict{t = 0}  \\
            &= \sum_{i = 1}^2 (- 1)^i \int_{\T^{n - 1}} G_x \big( y', g_i(y') \big) \psi_i(y') \de y'.
        \end{split}
    \end{equation}
    Whence, Fubini's Theorem yields
    \begin{align*}
        \dfrac{d}{d t} \F \big( (g_1, g_2)_t \big) \restrict{t = 0}
        &= 4 \sum_{i = 1}^2 (- 1)^i \int_{\T^n} \big( u_{(g_1, g_2)}(x) - m_{(g_1, g_2)} \big) 
        \sb{\int_{\T^{n - 1}} G_x \big( y', g_i(y') \big) \psi_i(y') \de y'} \de x \\
        &= 4 \sum_{i = 1}^2 (- 1)^i \int_{\T^{n - 1}} \!
        \rb{\int_{\T^n} \! G 
        \Big( \big( y', g_i(y') \big), x \Big) \big( u_{(g_1, g_2)}(x) - m_{(g_1, g_2)} \big) \de x} 
        \psi_i(y') \de y' \\
        &= 4 \sum_{i = 1}^2 (- 1)^i \int_{\T^{n - 1}} v_{(g_1, g_2)} \big( y', g_i(y') \big) \psi_i(y') \de y',
    \end{align*}
    concluding the proof for the first variation formula.
    As regards the second variation, observe that
    \begin{equation*}
        \dfrac{d}{d t} \F \big( (g_1, g_2)_t \big) = \dfrac{d}{d s} \F \big( (g_1, g_2)_{t + s} \big) \restrict{s = 0}
        = \dfrac{d}{d s} \F \Big( \big( (g_1, g_2)_t \big)_s \Big) \restrict{s = 0},
    \end{equation*}
    so that the computation reduces to
    \begin{equation*}
        \dfrac{d^2}{d t^2} \F \big( (g_1, g_2)_t \big) \restrict{t = 0}
        = 4 \sum_{i = 1}^2 (- 1)^i \dfrac{d}{d t} \rb{\int_{\T^{n - 1}} v_{(g_1, g_2)_t} 
        \big( x', g_i(x') + t \psi_i(x') \big) \psi_i(x') \de x'} \restrict{t = 0}.
    \end{equation*}
    Fix $i = 1, 2$. We can write
    \begin{multline*}
        \dfrac{d}{d t} \rb{\int_{\T^{n - 1}} v_{(g_1, g_2)_t} \big( x', g_i(x') + t \psi_i(x') \big) 
        \psi_i(x') \de x'} \restrict{t = 0} \\
        = \int_{\T^{n - 1}} \sb{\partial_t v_{(g_1, g_2)_t} 
        \big( x', g_i(x') \big) \restrict{t = 0} \psi_i(x') 
        + \partial_n v_{(g_1, g_2)} \big( x', g_i(x') \big) \psi_i(x')^2} \de x'.
   \end{multline*}
    By \eqref{eq:2.solder} we deduce
    \begin{equation*}
        \partial_t v_{(g_1, g_2)_t} \big( x', g_i(x') \big) \restrict{t = 0}
        = 2 \sum_{j = 1}^2 (- 1)^j \int_{\T^{n - 1}} G \Big( \big( x', g_i(x') \big), \big( y', g_j(y') \big) \Big) \psi_j(y') \de x',
    \end{equation*}
    which allows to conclude.
    \qedhere
    
\end{proof}

Putting together \cref{prop:3.anivar} and \cref{prop:3.nonvar} 
one obtains the first and second variation of the anisotropic Ohta-Kawasaki energy for strips,
which we state separately in \cref{thm:3.firstvar} and \cref{thm:3.secvar}.

\begin{thm} \label{thm:3.firstvar}

    Let $\psi_1, \psi_2 \in V$. 
    Consider the $(\psi_1, \psi_2)$-deformation $\cb{(g_1, g_2)_t}_t$ of $(g_1, g_2) \in \St_M$.
    For any surface tension $\phi \in C^2(\R^{n} \setminus \cb{0})$,
    \begin{multline*}
        \dfrac{d}{d t} J^\phi \big( (g_1, g_2)_t \big) \restrict{t = 0} \\
        = \sum_{i = 1}^2 (- 1)^i 
        \int_{\T^{n - 1}} \sb{4 \gamma v_{(g_1, g_2)} \big (x', g_i(x') \big) \psi_i(x')
        - \nabla \psi_i(x') \cdot \nabla_{x'} \phi \rb{(- 1)^i \big(- \nabla g_i(x'), 1 \big)}} \de x'. 
    \end{multline*}
    
\end{thm}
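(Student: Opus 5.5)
The formula follows by linearity of differentiation, since $J^\phi = P_{\T^n}^\phi + \gamma \F$. I will add the first variation of the anisotropic perimeter from \cref{prop:3.anivar} to $\gamma$ times the first variation of the nonlocal term from \cref{prop:3.nonvar}, and then match signs. Both propositions are stated for the same $(\psi_1, \psi_2)$-deformation $\cb{(g_1, g_2)_t}_t$ of $(g_1, g_2) \in \St_M$. Both apply under the standing hypotheses: \cref{prop:3.anivar} needs $\phi \in C^2(\R^n \setminus \cb{0})$, and \cref{prop:3.nonvar} needs no assumption on $\phi$. So the derivative of $J^\phi((g_1,g_2)_t)$ at $t=0$ exists and equals the sum of the two derivatives.

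For the nonlocal part, \cref{prop:3.nonvar} gives
\[
\gamma \dfrac{d}{dt}\F\big((g_1,g_2)_t\big)\restrict{t=0} = \sum_{i=1}^2 (-1)^i \int_{\T^{n-1}} 4\gamma\, v_{(g_1,g_2)}\big(x',g_i(x')\big)\psi_i(x') \de x'.
\]
This is already in the form of the first summand in the claimed bracket.

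For the perimeter part, \cref{prop:3.anivar} gives a sum with prefactor $(-1)^{i-1}$. Since $(-1)^{i-1} = -(-1)^i$, it can be rewritten as
\[
\sum_{i=1}^2 (-1)^i \int_{\T^{n-1}} \Big[- \nabla\psi_i(x')\cdot \nabla_{x'}\phi\big((-1)^i(-\nabla g_i(x'),1)\big)\Big] \de x'.
\]
This matches the second summand in the bracket.

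Collecting both contributions under the common factor $(-1)^i$ and a single integral over $\T^{n-1}$ yields the stated formula. The only point needing any care is this sign bookkeeping, namely converting $(-1)^{i-1}$ into $-(-1)^i$. There is no genuine obstacle, since everything else is a direct invocation of the two earlier propositions.
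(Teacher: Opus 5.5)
Your proposal is correct and follows the paper's own argument: the paper obtains \cref{thm:3.firstvar} simply by putting together \cref{prop:3.anivar} and $\gamma$ times \cref{prop:3.nonvar}. The only work is the sign rewriting $(-1)^{i-1} = -(-1)^i$, which you carry out correctly.
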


It is well known that for a strip to be a minimizer of $J^\phi$, it must be \textit{critical},
meaning that its first variation vanishes for any $(\psi_1, \psi_2)$-deformation.
A simple computation shows that any lamella $L = (\ell_1, \ell_2)$ is critical for $J^\phi$:
indeed we have
\begin{equation*}
    \dfrac{d}{d t} P_{\T^n}^\phi (L_t) \restrict{t = 0} 
    = - \sum_{i = 1}^2 (- 1)^i 
    \int_{\T^{n - 1}} \nabla \psi_i(x') \cdot \nabla_{x'} \phi \rb{(- 1)^i e_n} d x' = 0,
\end{equation*}
since $\nabla_{x'} \phi \rb{(- 1)^i e_n}$ is a constant vector for $i = 1, 2$.
As regards the nonlocal energy, by invariance of the lamella along horizontal translations,
$\T^{n - 1} \ni y' \mapsto v_L(y', \ell_i)$ is constant, therefore
\begin{equation*}
    \dfrac{d}{d t} \F(L_t) \restrict{t = 0} 
    = 4 \sum_{i = 1}^2 (- 1)^i v_L(0, \ell_i) \int_{\T^{n - 1}} \psi_i(x') \de x' = 0.
\end{equation*}

\begin{thm} \label{thm:3.secvar}

    Let $\psi_1, \psi_2 \in V$. 
    Consider the $(\psi_1, \psi_2)$-deformation $\cb{(g_1, g_2)_t}_t$ of $(g_1, g_2) \in \St_M$.
    For any surface tension $\phi \in C^2(\R^{n} \setminus \cb{0})$,
    \begin{multline} \label{eq:3.secvar}
        \partial^2 J^\phi(g_1, g_2) [\psi_1, \psi_2] 
        \defeq \dfrac{d^2}{d t^2} J^\phi((g_1, g_2)_t) \restrict{t = 0} \\
        \begin{multlined}
            = 8 \gamma \sum_{i, j = 1}^2 (- 1)^{i + j} \int_{\T^{n - 1}} 
            \rb{\int_{\T^{n - 1}} G \Big( \big( x', g_i(x') \big), \big( y', g_j(y') \big) \Big) 
            \psi_i(x') \psi_j(y') \de x'} d y'  \\
            + 4 \gamma \sum_{i = 1}^2 (- 1)^i 
            \int_{\T^{n - 1}} \partial_n v_{(g_1, g_2)} \big( x', g_i(x') \big) \psi_i(x')^2 \de x'
            \hspace{2.2 cm} \\
            + \sum_{i = 1}^2 \int_{\T^{n - 1}} \nabla_{x'}^2 \phi\rb{(- 1)^i \big( - \nabla g_i(x'), 1 \big)} 
        \sb{\nabla \psi_i(x'), \nabla \psi_i(x')} \de x'.
        \end{multlined}
    \end{multline}
    
\end{thm}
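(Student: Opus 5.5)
The formula \eqref{eq:3.secvar} follows by adding the second variation of the perimeter from \cref{prop:3.anivar} to $\gamma$ times the second variation of the nonlocal term from \cref{prop:3.nonvar}. The plan is as follows.

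Since $J^\phi((g_1,g_2)_t) = P_{\T^n}^\phi((g_1,g_2)_t) + \gamma \F((g_1,g_2)_t)$, and both maps are twice differentiable in $t$ near $0$, linearity of differentiation gives
\begin{equation*}
\dfrac{d^2}{d t^2} J^\phi\big((g_1,g_2)_t\big)\restrict{t=0}
= \dfrac{d^2}{d t^2} P_{\T^n}^\phi\big((g_1,g_2)_t\big)\restrict{t=0}
+ \gamma\, \dfrac{d^2}{d t^2} \F\big((g_1,g_2)_t\big)\restrict{t=0}.
\end{equation*}
Substituting the two formulae produces exactly the three lines of \eqref{eq:3.secvar}. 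The Hessian term is the first, the double Green integral multiplied by $8\gamma$ is the second, and the $\partial_n v_{(g_1,g_2)}$ term multiplied by $4\gamma$ is the third.

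The only point needing attention is to justify the twice differentiability. For the perimeter term, note that $\nabla g_i + t\nabla\psi_i$ is bounded and the vector $(\mp(\nabla g_i+t\nabla\psi_i),\pm1)$ stays away from the origin. Since $\phi\in C^2(\R^n\setminus\{0\})$, the integrand in \eqref{eq:3.aniperstrip} is $C^2$ in $t$, with second derivatives bounded uniformly on $\T^{n-1}\times(-t_0,t_0)$. Differentiation under the integral sign is therefore legitimate, and the Taylor expansion in the proof of \cref{prop:3.anivar} has a uniform remainder.

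For the nonlocal term, I would rely on the computation of \cref{prop:3.nonvar}. It uses $v_{(g_1,g_2)_t}\in C^1$ (\cref{rem:2.solreg}) and the integrability of $G$ on the codimension-one graphs, so the derivative of $t\mapsto \int_{\T^{n-1}} v_{(g_1,g_2)_t}(x',g_i+t\psi_i)\psi_i\,dx'$ exists by the chain rule. I expect this step to be the main obstacle: one must check that the Green kernel's singularity is integrable on $\Gamma_{g_i}\times\Gamma_{g_j}$ (the singularity is $|x-y|^{2-n}$, or logarithmic when $n=2$, against $(n-1)$-dimensional measure). Since the paper has already established \cref{prop:3.nonvar}, I would simply invoke it and conclude.
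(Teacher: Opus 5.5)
Your proposal is correct and is exactly the paper's argument: \eqref{eq:3.secvar} is obtained by adding the second variation in \cref{prop:3.anivar} to $\gamma$ times the one in \cref{prop:3.nonvar}, and your remarks on differentiating under the integral sign only make explicit what the paper leaves implicit. The only slip is cosmetic: in the displayed formula the Green term is the first line, the $\partial_n v_{(g_1,g_2)}$ term the second, and the Hessian term the third.
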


If a strip is a minimizer of $J^\phi$, besides being critical it is also \textit{stable},
namely the second variation is nonnegative for any $(\psi_1, \psi_2)$-deformation.
The lamella can be proved to be stable for $J^\phi$, assuming the surface tension to be uniformly elliptic
and the parameter $\gamma$ to be small enough, as proved in \cref{prop:3.lamstab}.
With such assumptions we can actually show it is \textit{strictly stable},
that is, the second variation is positive for any $(\psi_1, \psi_2)$-deformation different from the trivial one.

\begin{rem} \label{rem:3.greenterm}

    The Green's term in the second variation formula \eqref{eq:3.secvar} is stabilizing, 
    meaning that it is always nonnegative:
    for $i = 1, 2$ define $\underline{\psi_i} \in C^1(\T^n)$ by
    \begin{equation*}
        \underline{\psi_i} (x', x_n) \defeq \frac{\psi_i(x')}{\sqrt{1 + \abs{\nabla g_i(x')}^2}},
        \qquad \qquad (x', x_n) \in \T^n,
    \end{equation*}
    so that by setting $\mu \defeq \sum_{i = 1}^2 (- 1)^i \underline{\psi_i} \Ha^{n - 1} \mres \Gamma_{g_i}$,
    we may exploit \eqref{eq:2.greenpos} to get
    \begin{align*}
        0 &\le \int_{\T^n} \rb{\int_{\T^n} G(x, y) \de \mu(x)} \de \mu(y) \\
        &= \sum_{i, j = 1}^2 (- 1)^{i + j} 
    \int_{\T^{n - 1}} \rb{\int_{\T^{n - 1}} G \Big( \big( x', g_i(x') \big), \big( y', g_j(y') \big) \Big) 
    \psi_i(x') \psi_j(y') \de x'} \de y'.
    \end{align*}
    Moreover, $\mu \in H^{- 1}(\T^n)$: indeed, given $\Psi \in C^1(\T^n)$ such that
    $\Psi = \psi_i$ on $\Gamma_{g_i}$, $i = 1, 2$, for any $\varphi \in H^1(\T^n)$ we have
    \begin{align*}
        \abs{\dpa{\mu, \varphi}} &= \abs{\int_{\T^n} \varphi \de \mu}
        = \abs{\int_{\T^{n - 1}} \varphi(x', g_2(x')) \psi_2(x') \de x' 
        - \int_{\T^{n - 1}} \varphi(x', g_1(x')) \psi_1(x') \de x'} \\
        &= \abs{\int_{\T^{n - 1}} \rb{\int_{g_1(x')}^{g_2(x')} 
        \partial_n \Big( \varphi(x', x_n) \Psi(x', x_n) \Big) \de x_n} \de x'}
        \le \int_{\T^n} \abs{\nabla (\varphi \Psi)} \\
        &\le \norm{\Psi}_\infty \norm{\nabla \varphi}_{L^1(\T^n)} 
        + \norm{\nabla \Psi}_\infty \norm{\varphi}_{L^1(\T^n)}
        \le \norm{\Psi}_{C^1(\T^n)} \norm{\varphi}_{H^1(\T^n)}.
    \end{align*}
    
\end{rem}

\begin{rem} \label{rem:3.lamsol}

    In the case of the lamella $L = (\ell_1, \ell_2) \in \St_M$, 
    one may actually write explicitly the solution $v_L$
    of \eqref{eq:2.solution}: since the lamella is invariant under horizontal translations,
    the same is true for $v_L$, so that one can reduce to solve the one-dimensional problem
    \begin{equation*}
        \begin{dcases}
            - v'' = 2 \1_{(\ell_1, \ell_2)} - 2 M \qquad \text{ in } \S^1, \\
            \int_{\S^1} v = 0.
        \end{dcases}
    \end{equation*}
    For our ends it suffices to note 
    \begin{equation*} 
        v'(r) = 
        \begin{cases}
            2 M (r - \ell_1) + M - M^2 &\qquad \text{ on } [0, \ell_1], \\
            2 (M - 1) (r - \ell_1) + M - M^2 &\qquad \text{ on } [\ell_1, \ell_2], \\
            2 M (r - \ell_1) - M - M^2 &\qquad \text{ on } [\ell_2, 1].
        \end{cases}
    \end{equation*}
    This implies that, for any $x' \in \T^{n - 1}$,
    \begin{equation} \label{eq:2.solvalue}
        \partial_n v_L(x', \ell_1) = - \partial_n v_L(x', \ell_2) = M - M^2.
    \end{equation}
    
\end{rem}

\begin{prop} \label{prop:3.lamstab}

    Let $L$ be the lamella of volume $M \in (0, 1)$, 
    and let $\phi$ be a $\lambda$-elliptic surface tension for some $\lambda > 0$.
    Then, there exists $\gamma_0 = \gamma_0 (M, \lambda, n) > 0$ such that
    $L$ is strictly stable for $J_\gamma^\phi$ for any $\gamma \in [0, \gamma_0)$.
    
\end{prop}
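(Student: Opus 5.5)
We specialize the second variation formula \eqref{eq:3.secvar} of \cref{thm:3.secvar} to the lamella $L = (\ell_1, \ell_2)$, where $g_i \equiv \ell_i$ and $\nabla g_i = 0$. The three terms then read as follows.

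The Green term is $8\gamma$ times a nonnegative quantity by \cref{rem:3.greenterm}, so it can simply be dropped from below.

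For the term involving $\partial_n v_L$, we use \eqref{eq:2.solvalue}: $\partial_n v_L(\cdot,\ell_1) = M - M^2$ and $\partial_n v_L(\cdot, \ell_2) = -(M - M^2)$. Since $(-1)^i\partial_n v_L(\cdot, \ell_i) = -(M-M^2)$ for both $i = 1, 2$, this term equals
\begin{equation*}
- 4 \gamma (M - M^2) \sum_{i=1}^2 \int_{\T^{n-1}} \psi_i^2 .
\end{equation*}

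For the anisotropic term, the integrand is $\nabla_{x'}^2 \phi((-1)^i e_n)[\nabla \psi_i, \nabla \psi_i]$. Because $\nabla\psi_i(x') \in \R^{n-1}\times\{0\}$ is orthogonal to $(-1)^i e_n \in \S^{n-1}$, the $\lambda$-ellipticity of \cref{defi:2.uniell}, applied to the full Hessian restricted to horizontal vectors, gives the lower bound $\lambda \abs{\nabla\psi_i}^2$ pointwise.

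Combining the three pieces, we obtain
\begin{equation*}
\partial^2 J_\gamma^\phi(L)[\psi_1,\psi_2] \ge \sum_{i=1}^2 \int_{\T^{n-1}} \Big( \lambda \abs{\nabla \psi_i}^2 - 4\gamma (M - M^2) \psi_i^2 \Big).
\end{equation*}
Since each $\psi_i \in V$ has zero mean, the Poincaré--Wirtinger inequality \eqref{eq:2.poiwirt} gives $\int \psi_i^2 \le (n-1)\int\abs{\nabla\psi_i}^2$. Hence
\begin{equation*}
\partial^2 J_\gamma^\phi(L)[\psi_1,\psi_2] \ge \big(\lambda - 4\gamma(n-1)(M-M^2)\big)\norm{(\psi_1,\psi_2)}_{V\times V}^2 .
\end{equation*}
We therefore set $\gamma_0 \defeq \lambda / \big(4(n-1)M(1-M)\big)$, which depends only on $M$, $\lambda$ and $n$ as required. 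For $\gamma \in [0,\gamma_0)$ the constant in front is strictly positive.

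The remaining point is that $\norm{(\psi_1,\psi_2)}_{V\times V} > 0$ whenever $(\psi_1,\psi_2) \neq (0,0)$. This holds because a zero-mean $C^1$ function with vanishing gradient is identically zero. Strict stability follows, and in fact with the quantitative bound $C_L^\phi \ge \lambda - 4\gamma(n-1)M(1-M)$.

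There is no real obstacle here. The only points needing care are the sign bookkeeping in $(-1)^i \partial_n v_L(\cdot,\ell_i)$, which makes this term destabilizing on both interfaces, and checking that the ellipticity hypothesis is applied to horizontal directions orthogonal to $\pm e_n$, which is exactly what the second form of \cref{defi:2.uniell} provides.
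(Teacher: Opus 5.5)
Your proposal is correct and follows essentially the same route as the paper's proof. You drop the nonnegative Green term, evaluate the $\partial_n v_L$ term via \eqref{eq:2.solvalue}, bound the anisotropic term below by $\lambda$-ellipticity, apply Poincar\'e--Wirtinger, and arrive at the same threshold $\gamma_0 = \lambda/\big(4(n-1)M(1-M)\big)$ as in \eqref{eq:3.gamma0}.
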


\begin{proof}

    Let $\psi_1, \psi_2 \in V$, and consider 
    the $(\psi_1, \psi_2)$-deformation $\cb{L_t}_t$ of $L = (\ell_1, \ell_2)$.
    By means of \eqref{eq:2.solvalue}, the second variation for the lamella takes the following form:
    \begin{multline*}
        \partial^2 J_\gamma^\phi(L)[\psi_1, \psi_2]
        = 8 \gamma \sum_{i, j = 1}^2 (- 1)^{i + j} \int_{\T^{n - 1}} 
        \rb{\int_{\T^{n - 1}} G \Big( \big( x', \ell_i \big), \big( y', \ell_j) \big) \Big) 
        \psi_i(x') \psi_j(y') \de x'} d y' \\
        + 4 \gamma \rb{M^2 - M} \sum_{i = 1}^2 \int_{\T^{n - 1}} \psi_i(x')^2 \de x' \\
        + \sum_{i = 1}^2 \int_{\T^{n - 1}} \nabla_{x'}^2 \phi\rb{(- 1)^i e_n} 
        \sb{\nabla \psi_i(x'), \nabla \psi_i(x')} \de x'.
    \end{multline*}
    By \cref{rem:3.greenterm} the Green's term is nonnegative so that we may ignore it.
    As regards the third term, by $\lambda$-ellipticity of $\phi$ we have that, for $x' \in \T^{n - 1}$ 
    and $i = 1, 2$,
    \begin{equation*}
        \nabla_{x'}^2 \phi\rb{(- 1)^i e_n} \sb{\nabla \psi_i(x'), \nabla \psi_i(x')}
        \ge \lambda \abs{\nabla \psi_i (x')}^2,
    \end{equation*}
    and we infer
    \begin{equation*}
        \sum_{i = 1}^2 \int_{\T^{n - 1}} \nabla_{x'}^2 \phi\rb{(- 1)^i e_n} 
        \sb{\nabla \psi_i(x'), \nabla \psi_i(x')} \de x'
        \ge \lambda \sum_{i = 1}^2 \norm{\nabla \psi_i}_{L^2(\T^{n - 1})}^2.
    \end{equation*}
    Therefore, by the Poincaré-Wirtinger inequality \eqref{eq:2.poiwirt},
    \begin{align*}
        \partial^2 J_\gamma^\phi(L)[\psi_1, \psi_2]
        &\ge 4 \gamma \rb{M^2 - M} (n - 1) \sum_{i = 1}^2 \norm{\nabla \psi_i}_{L^2(\T^{n - 1})}^2
        + \lambda \sum_{i = 1}^2 \norm{\nabla \psi_i}_{L^2(\T^{n - 1})}^2 \\
        &= \rb{4 \gamma \rb{M^2 - M} (n - 1) + \lambda} \sum_{i = 1}^2 \norm{\nabla \psi_i}_{L^2(\T^{n - 1})}^2.
    \end{align*}
    Defining $\gamma_0 > 0$ as
    \begin{equation} \label{eq:3.gamma0}
        \gamma_0 \defeq \dfrac{\lambda}{4 (M - M^2) (n - 1)},
    \end{equation}
    the assertion holds true.
    \qedhere 
    
\end{proof}

\begin{rem} \label{rem:3.weakuniell}

    Uniform ellipticity is not strictly necessary in the previous result:
    Indeed, one can require directly that $\phi \in C^2(\R^n \setminus \cb{0})$
    and that there exists $\lambda > 0$ with
    \begin{equation} \label{eq:3.weakuniell}
        \nabla_{x'}^2 \phi\rb{\pm e_n} \sb{v', v'} \ge \lambda \abs{v'}^2
    \end{equation}
    for any $v' \in \R^{n - 1}$.
    In particular we may choose $\gamma_0$ defined again as in \eqref{eq:3.gamma0}.    
    
\end{rem}

\section{\texorpdfstring{$C^1$}--minimality of the lamella} \label{sec:4}

We now consider a stronger distance than $d_{L^1}$, defined on $\St_M$:
the $C^1$-distance $d_{C^1} : \St_M \times \St_M \to \R$, given by 
\begin{equation} \label{eq:4.C1dist}
    d_{C^1} \big( (g_1, g_2), (h_1, h_2) \big) 
    \defeq \min_{\tau' \in \T^{n - 1}} 
    \rb{\sum_{i = 1}^2 \norm{\nabla h_i(\cdot + \tau') - \nabla g_i(\cdot)}_\infty}
\end{equation}
for any $(g_1, g_2), (h_1, h_2) \in \St_M$.
If one of the two strips is a lamella, this reduces to
\begin{equation*}
    d_{C^1} \big( (g_1, g_2), L \big) = \norm{\nabla g_1}_\infty + \norm{\nabla g_2}_\infty.
\end{equation*}
    
In this section we aim to prove a local minimality result with respect to the $C^1$-distance for a critical and strictly stable strip,
also obtaining a quantitative estimate, as stated in \cref{thm:4.C1infty}.

\begin{thm} \label{thm:4.C1infty}

    Let $(g_1, g_2) \in \St_M$ be a critical and strictly stable strip for $J^\phi$,
    and suppose that $\phi \in C^2(\R^n \setminus \cb{0})$ is a uniformly elliptic surface tension.
    Then there exists $\delta > 0$ such that
    \begin{equation}
        J^\phi(h_1, h_2) 
        \ge J^\phi(g_1, g_2) + \dfrac{C_{(g_1, g_2)}^\phi}{4} \min_{\nu' \in \T^{n - 1}}
        \rb{\sum_{i = 1}^2 \norm{\nabla h_i(\cdot + \nu') - \nabla g_i(\cdot)}_{L^2(\T^{n - 1})}^2}
    \end{equation}
    for any strip $(h_1, h_2) \in \St_M$ with $d_{C^1}((g_1, g_2), (h_1, h_2)) < \delta$, where
    \begin{equation} \label{eq:4.constant}
        C_{(g_1, g_2)}^\phi \defeq \inf \Big\{ \partial^2 J^\phi(g_1, g_2) [\psi_1, \psi_2] : 
        \psi_1, \psi_2 \in V, \ \norm{(\psi_1, \psi_2)}_{V \times V} = 1 \Big\}.
    \end{equation}
    
\end{thm}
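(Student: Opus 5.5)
Given $(h_1,h_2)$ close to $(g_1,g_2)$ in $d_{C^1}$, I first translate $(h_1,h_2)$ horizontally by the minimizer $\tau'$ in \eqref{eq:4.C1dist}. Since $J^\phi$ is translation invariant, this changes nothing on either side. I also translate vertically: replacing $h_i$ by $h_i + c$ with a common constant $c$ keeps the set in $\St_M$, leaves $J^\phi$ unchanged, and leaves the gradients unchanged. I choose $c$ so that $\psi_1 := h_1 - g_1$ has zero mean. The volume constraint $\int (h_2-h_1) = \int (g_2-g_1) = M$ then forces $\psi_2 := h_2 - g_2$ to have zero mean as well, so $(\psi_1,\psi_2)\in V\times V$. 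For $t\in[0,1]$ I consider the $(\psi_1,\psi_2)$-deformation $(g_1,g_2)_t$, which interpolates between $(g_1,g_2)$ and $(h_1,h_2)$. It stays in $\St_M$ for all $t\in[0,1]$ by convexity of the constraint $g_1<g_2$; to keep $0<g_1+t\psi_1$ and $g_2+t\psi_2<1$ after the vertical shift, I take $\delta$ small and use Poincaré to control $\norm{\psi_i}_\infty$ in terms of $\norm{\nabla\psi_i}_\infty$ (the periodic-cylinder issue can be absorbed by choosing the vertical offset suitably).

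Write $f(t) = J^\phi((g_1,g_2)_t)$. Taylor's formula with integral remainder gives $f(1) = f(0) + f'(0) + \int_0^1 (1-t) f''(t)\,dt$, and $f'(0)=0$ by criticality. It therefore suffices to show
\begin{equation*}
\abs{ \partial^2 J^\phi((g_1,g_2)_t)[\psi_1,\psi_2] - \partial^2 J^\phi(g_1,g_2)[\psi_1,\psi_2] } \le \tfrac{C^\phi_{(g_1,g_2)}}{2}\norm{(\psi_1,\psi_2)}_{V\times V}^2
\end{equation*}
uniformly in $t\in[0,1]$ once $\delta$ is small. Granting this, $f''(t) \ge \tfrac12 C\norm{(\psi_1,\psi_2)}^2$, and $\int_0^1(1-t)\,dt = \tfrac12$ yields the constant $C/4$. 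The minimum over $\nu'$ on the right-hand side is bounded above by the value at $\tau'$, so the claimed inequality follows. Note that the formula in \cref{thm:3.secvar} applies at each intermediate strip $(g_1,g_2)_t$, since the derivative in $t$ at a given time $t$ is exactly the second variation at $(g_1,g_2)_t$ in the direction $(\psi_1,\psi_2)$.

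The continuity estimate is the core of the proof, and I would split it term by term in \eqref{eq:3.secvar}.

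For the anisotropic term, $\nabla^2_{x'}\phi$ is uniformly continuous on a compact neighbourhood of the set $\{(-1)^i(-\nabla g_i,1)\}$, which stays away from the origin. Since $\norm{t\nabla\psi_i}_\infty\le\delta$, the pointwise difference of the Hessians is at most $\omega(\delta)$. The difference of the terms is therefore at most $\omega(\delta)\sum_i\norm{\nabla\psi_i}_{L^2}^2$.

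For the $\partial_n v$ term, \eqref{eq:2.solestimates1} and \eqref{eq:2.solestimates2} give $\norm{v_{(g_1,g_2)_t}-v_{(g_1,g_2)}}_{C^1}\le C\abs{(g_1,g_2)_t\Delta(g_1,g_2)}\le C\delta$. The evaluation point also shifts by $t\psi_i$, so I also need uniform continuity of $\partial_n v_{(g_1,g_2)}$; this follows from $W^{2,p}\hookrightarrow C^{1,\alpha}$. The difference is then at most $\omega(\delta)\norm{\psi_i}_{L^2}^2$, which Poincaré \eqref{eq:2.poiwirt} converts into the $V$-norm.

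The Green's term is the main obstacle, because $G$ is singular and the graphs move. I would write it as $\norm{\mu_t}_{H^{-1}}^2$ as in \cref{rem:3.greenterm}, with $\mu_t = \sum_i (-1)^i\psi_i(x')\,\Ha^{n-1}\mres\Gamma_{g_i+t\psi_i}$ pushed forward from $\T^{n-1}$. For a test function $\varphi\in H^1(\T^n)$, the quantity $\langle\mu_t-\mu_0,\varphi\rangle = \sum_i(-1)^i\int\psi_i(x')\bigl(\varphi(x',g_i+t\psi_i)-\varphi(x',g_i)\bigr)dx'$ equals an integral of $\psi_i\,\partial_n\varphi$ over the thin region between the two graphs. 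This gives $\norm{\mu_t-\mu_0}_{H^{-1}}\le C\norm{\psi_i}_{L^\infty}^{1/2}\cdots$. That bound is awkward, so instead I would follow \cite{AFM13}: estimate $\abs{\langle\mu_t-\mu_0,\varphi\rangle}\le \norm{\psi}_{L^2}\,\norm{t\psi}_\infty^{1/2}\,\norm{\nabla\varphi}_{L^2}$ via Cauchy–Schwarz on vertical segments together with a trace-type bound. Combined with $\norm{\mu_0}_{H^{-1}}\le C\norm{\psi}_{H^1}$ from \cref{rem:3.greenterm}, this shows that $\bigl|\norm{\mu_t}^2-\norm{\mu_0}^2\bigr| \le \omega(\delta)\norm{(\psi_1,\psi_2)}_{V\times V}^2$. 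Here $\norm{\psi}_\infty$ is small by the choice of translation and the $C^1$-smallness. This step is where I expect the most technical care to be needed.
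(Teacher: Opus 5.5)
There is one genuine gap, and it is exactly where the hypotheses of the theorem are used: you never show that $C^\phi_{(g_1,g_2)}>0$. Your reduction (``it suffices to show that the second variations along the segment differ by at most $\frac{C}{2}\norm{(\psi_1,\psi_2)}_{V\times V}^2$ once $\delta$ is small'') only closes if $C^\phi_{(g_1,g_2)}$ is strictly positive. Your term-by-term estimates give a bound $\omega(\delta)\norm{(\psi_1,\psi_2)}_{V\times V}^2$ with $\omega(\delta)\to0$, and you can force $\omega(\delta)\le C^\phi_{(g_1,g_2)}/2$ only when $C^\phi_{(g_1,g_2)}>0$. Strict stability alone does not give this. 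It says $\partial^2J^\phi(g_1,g_2)[\psi_1,\psi_2]>0$ for each nonzero direction, and in the infinite-dimensional space $V\times V$ that does not imply a positive infimum on the unit sphere. Tellingly, your argument never invokes strict stability, and never uses uniform ellipticity beyond the $C^2$ regularity of $\phi$.

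The paper fills this in \cref{prop:4.coerc} by a compactness argument. Take a minimizing sequence for $C^\phi_{(g_1,g_2)}$ normalized in $V\times V$, and extract a weak limit $(\psi_1,\psi_2)$ in the completion $H\times H$. The Green's term and the $\partial_n v$ term involve only $L^2$ norms of the $\psi_i$, so they are weakly continuous by Rellich--Kondrachov. The Hessian term is convex and continuous, hence weakly lower semicontinuous. If the weak limit is nonzero, lower semicontinuity and strict stability give $C^\phi_{(g_1,g_2)}\ge\partial^2J^\phi(g_1,g_2)[\psi_1,\psi_2]>0$. If it is zero, the two lower-order terms vanish in the limit. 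Uniform ellipticity then bounds the Hessian term from below by a positive constant depending on $\lambda$ and $\norm{\nabla g_i}_\infty$, since $\sum_i\norm{\nabla\psi_{i,h}}_{L^2}^2=1$.

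With this in place, the rest of your proposal is the paper's proof. You use the same normalization of $(\psi_1,\psi_2)$, Taylor expansion with integral remainder plus criticality, and the same three-term continuity argument as in \cref{lem:4.secvarcont}. You carry that argument out directly along the segment with explicit moduli rather than by sequences. Your factor $\norm{t\psi}_\infty^{1/2}$ in the Green's term estimate is the right one, since Cauchy--Schwarz on vertical segments produces a square root.
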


\begin{rem} \label{rem:4.C1est}

    For any two strips $(g_1, g_2), (h_1, h_2) \in \St_M$, we may assume, up to a translation, that
    \begin{equation*}
        \sum_{i = 1}^2 \norm{h_i - g_i}_\infty 
        \le \sum_{i = 1}^2 \norm{\nabla h_i - \nabla g_i}_\infty
        = d_{C^1} \big( (g_1, g_2), (h_1, h_2) \big).
    \end{equation*}
    Indeed, if $\tau' \in \T^{n - 1}$ attains the minimum in \eqref{eq:4.C1dist}, 
    define $\tilde h_i \in C^1(\T^{n - 1})$ by
    \begin{equation*}
        \tilde h_i(x') \defeq h_i (x' + \tau') - \int_{\T^{n - 1}} (h_i - g_i), 
        \qquad \qquad x' \in \T^{n - 1}, \qquad i = 1, 2.
    \end{equation*}
    Then, up to a translation $(\tilde h_1, \tilde h_2) = (h_1, h_2)$, moreover
    \begin{equation*}
        \tilde h_i(x') - g_i(x') 
        = h_i(x' + \tau') - g_i(x') - \int_{\T^{n - 1}} (h_i - g_i)
        \le \norm{\nabla \tilde h_i - \nabla g_i}_\infty
    \end{equation*}
    for any $x' \in \T^{n - 1}$, validating the claim.

\end{rem}

For the following, we note that $(V, \norm{\cdot}_V)$ is not a Banach space,
however its completion $(H, \norm{\cdot}_H)$ is a reflexive space, where
\begin{equation*}
    H \defeq \overline V^{\norm{\cdot}_V} = \cb{ \psi \in H^1(\T^{n - 1}) : \int_{\T^{n - 1}} \psi = 0}, 
    \qquad \qquad \norm{\psi}_H = \norm{\nabla \psi}_{L^2(\T^{n - 1})}, \quad \psi \in H.
\end{equation*}
$(H \times H, \norm{\cdot}_{H \times H})$ is reflexive as well, taking the natural product norm
\begin{equation*}
    \norm{(\psi_1, \psi_2)}_{H \times H} \defeq \sqrt{\norm{\psi_1}_H^2 + \norm{\psi_2}_H^2},
    \qquad \qquad (\psi_1, \psi_2) \in H \times H.
\end{equation*}
Therefore, given $(g_1, g_2) \in \St_M$, one may easily extend $\partial^2 J^\phi(g_1, g_2)$ computed in \eqref{eq:3.secvar} to $H \times H$ by continuity.
Moreover $\partial^2 J^\phi(g_1, g_2)$ is also weakly lower semicontinuous, as stated in the following lemma.

\begin{lem}

    The second variation functional 
    $\partial^2 J^\phi(g_1, g_2) : H \times H \to \R$ is weakly lower semicontinuous,
    that is, for any sequence $\cb{(\psi_{1, h}, \psi_{2, h})}_{h \in \N} \subset H \times H$
    weakly converging to $(\psi_1, \psi_2) \in H \times H$ it holds
    \begin{equation*}
        \partial^2 J^\phi(g_1, g_2) \sb{\psi_1, \psi_2} 
        \le \liminf_{h \to \infty} \partial^2 J^\phi(g_1, g_2) \sb{\psi_{1, h}, \psi_{2, h}}.
    \end{equation*}
    
\end{lem}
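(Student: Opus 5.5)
We split $\partial^2 J^\phi(g_1, g_2)$, as given in \eqref{eq:3.secvar}, into its three terms and show that each is weakly lower semicontinuous, or even weakly continuous, on $H \times H$. Let $(\psi_{1,h}, \psi_{2,h}) \rightharpoonup (\psi_1, \psi_2)$ weakly in $H \times H$. Then $\nabla \psi_{i,h} \rightharpoonup \nabla \psi_i$ weakly in $L^2(\T^{n-1})$, and the sequence is bounded in $H^1$. By the Rellich–Kondrachov theorem on the compact manifold $\T^{n-1}$, together with the Poincaré–Wirtinger inequality \eqref{eq:2.poiwirt}, we get $\psi_{i,h} \to \psi_i$ strongly in $L^2(\T^{n-1})$. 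Since the sum of liminfs is at most the liminf of the sum, it suffices to treat each term separately.

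\emph{Anisotropic term.} For fixed $(g_1,g_2)$, the coefficient $A_i(x') \defeq \nabla_{x'}^2 \phi\bigl((-1)^i(-\nabla g_i(x'),1)\bigr)$ is a continuous, hence bounded, field of symmetric matrices on $\T^{n-1}$. Convexity of $\phi$ makes $A_i(x')$ positive semidefinite, so $\xi \mapsto \int A_i[\xi,\xi]$ is a convex, strongly continuous functional on $L^2(\T^{n-1};\R^{n-1})$. It is therefore weakly lower semicontinuous. Concretely, $A_i[\xi_h,\xi_h] \ge 2A_i[\xi_h,\xi] - A_i[\xi,\xi]$, and integrating and passing to the limit gives the inequality.

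\emph{Curvature term.} The term $4\gamma\sum_i(-1)^i\int \partial_n v_{(g_1,g_2)}(x',g_i(x'))\,\psi_i^2$ has coefficient $\partial_n v_{(g_1, g_2)}(\cdot, g_i(\cdot))$, which is bounded and continuous by \eqref{eq:2.solestimates1}. Hence this term is continuous under strong $L^2$ convergence of $\psi_{i,h}$, and so its value converges along the sequence.

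\emph{Green's term.} By \cref{rem:3.greenterm}, this term equals $\norm{\mu_h}_{H^{-1}(\T^n)}^2$ with $\mu_h = \sum_i (-1)^i \underline{\psi_{i,h}}\,\Ha^{n-1}\mres\Gamma_{g_i}$. Equivalently, by \eqref{eq:2.greenpos}, it equals $\int_{\T^n} \abs{\nabla w_h}^2$, where $w_h$ solves $-\Delta w_h = \mu_h$ with zero mean; note that $\mu_h(\T^n) = 0$ because $\psi_{i,h}$ has zero mean. The estimate in that remark extends by density to $\psi_i \in H^1$. One uses the trace inequality $\int_{\Gamma_{g_i}} \abs{\varphi} \abs{\psi} \le C \norm{\varphi}_{H^1(\T^n)} \norm{\psi}_{H^1(\T^{n-1})}$, or simply extends $\psi_i$ constantly in $x_n$ and pairs it with $\varphi$ via the fundamental theorem of calculus as done there. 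This shows that $\Psi \mapsto \mu$ is a bounded linear map $H\times H \to H^{-1}(\T^n)$. A bounded linear map is weak-to-weak continuous, so $\mu_h \rightharpoonup \mu$ weakly in $H^{-1}$. The norm squared is weakly lower semicontinuous, hence $\norm{\mu}^2 \le \liminf \norm{\mu_h}^2$. In fact the map is compact, since it factors through the trace and the compact embedding, so one even gets convergence; lower semicontinuity is all that is needed.

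Summing the three terms gives the claim. The only delicate point is the extension of the Green's term from $V$ to $H$ and the boundedness of $\mu$ in $H^{-1}$ for merely $H^1$ densities $\psi_i$. I would handle this by rewriting the pairing $\dpa{\mu,\varphi}$ as $\int_{\T^{n-1}} \bigl[\varphi(x',g_2(x'))\psi_2(x') - \varphi(x',g_1(x'))\psi_1(x')\bigr]\,dx'$ and bounding it via the trace theorem on Lipschitz graphs. Everything else is standard.
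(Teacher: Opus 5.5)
Your proposal is correct and follows essentially the paper's argument: Rellich--Kondrachov gives strong $L^2$ convergence of $\psi_{i,h}$, which makes the $\partial_n v_{(g_1,g_2)}$ term continuous, and the anisotropic term is weakly lower semicontinuous because it is a convex, strongly continuous quadratic functional of $\nabla\psi_i$. The only deviation is in the Green's term: the paper simply declares it continuous under $L^2$ convergence, whereas you write it as $8\gamma\norm{\mu}_{H^{-1}(\T^n)}^2$ for a bounded linear map $(\psi_1,\psi_2)\mapsto\mu$ and use weak lower semicontinuity of the norm (noting that compactness even gives convergence); this, together with your remark that $\nabla_{x'}^2\phi$ is positive semidefinite by convexity of $\phi$, makes explicit details the paper leaves implicit.
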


\begin{proof}

    Given a sequence $\cb{(\psi_{1, h}, \psi_{2, h})}_{h \in \N} \subset H \times H$
    weakly converging to $(\psi_1, \psi_2) \in H \times H$, by the Rellich-Kondrachov Theorem it follows that
    \begin{align*}
        \psi_{i, h} &\to \psi_i& &\text{strongly in } L^2(\T^{n - 1}), &i = 1, 2, \\
        \nabla \psi_{i, h} &\rightharpoonup \nabla \psi_i 
        & &\text{weakly in } L^2(\T^{n - 1}; \R^{n - 1}), &i = 1, 2.
    \end{align*}
    Therefore the first two terms of \eqref{eq:3.secvar} are actually weakly continuous,
    since $L^2$-convergence of $\psi_{i, h}$ to $\psi_i$, $i = 1, 2$, suffices.
    To conclude, the third term of \eqref{eq:3.secvar} is weakly lower semicontinuous: since the map 
    $L^2(\T^{n - 1}; \R^{n - 1}) \ni \tau 
    \mapsto \int_{\T^{n - 1}} \nabla_{x'}^2 \phi\rb{(- 1)^i (- \nabla g_i(x'), 1)} \sb{\tau(x'), \tau(x')} \de x'$
    is strongly continuous and convex for $i = 1, 2$, it must also be also weakly lower semicontinuous, 
    and the conclusion follows.
    \qedhere
    
\end{proof}

The next proposition shows that if a strip $(g_1, g_2) \in \St_M$ is critical and strictly stable for $J^\phi$,
then the constant defined in \eqref{eq:4.constant} is strictly positive 
and one has some kind of coercivity for the functional $\partial^2 J^\phi(g_1, g_2)$.

\begin{prop} \label{prop:4.coerc}

    Let $(g_1, g_2) \in \St_M$ be a critical and strictly stable strip for $J^\phi$,
    and suppose that $\phi \in C^2(\R^n \setminus \cb{0})$ is 
    a uniformly elliptic surface tension.
    Then, it holds
    \begin{equation} \label{eq:4.coercinf} 
        C_{(g_1, g_2)}^\phi \defeq \inf \Big\{ \partial^2 J^\phi(g_1, g_2) [\psi_1, \psi_2] : 
        \psi_1, \psi_2 \in V, \ \norm{(\psi_1, \psi_2)}_{V \times V} = 1 \Big\} > 0,
    \end{equation}
    and 
    \begin{equation*}
        \partial^2 J^\phi(g_1, g_2) [\psi_1, \psi_2] 
        \ge C_{(g_1, g_2)}^\phi \norm{(\psi_1, \psi_2)}_{V \times V}^2 
        \qquad \qquad \text{for any } (\psi_1, \psi_2) \in V \times V.
    \end{equation*}
    
\end{prop}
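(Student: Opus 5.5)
The plan is the standard minimizing-sequence and weak-compactness argument, as in \cite{AFM13}. The second inequality follows from the first by homogeneity. The quadratic form $\partial^2 J^\phi(g_1,g_2)$ is $2$-homogeneous, so for $(\psi_1,\psi_2)\neq(0,0)$ we apply the definition of $C^\phi_{(g_1,g_2)}$ to $(\psi_1,\psi_2)/\norm{(\psi_1,\psi_2)}_{V\times V}$; the case $(0,0)$ is trivial. It therefore suffices to show $C^\phi_{(g_1,g_2)}>0$. Strict stability gives $C^\phi_{(g_1,g_2)}\ge 0$, and we argue by contradiction assuming $C^\phi_{(g_1,g_2)}=0$.

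Under that assumption we pick $(\psi_{1,h},\psi_{2,h})\in V\times V$ with $\norm{(\psi_{1,h},\psi_{2,h})}_{V\times V}=1$ and $\partial^2 J^\phi(g_1,g_2)[\psi_{1,h},\psi_{2,h}]\to 0$. Since $H\times H$ is reflexive, up to a subsequence $(\psi_{1,h},\psi_{2,h})\rightharpoonup(\psi_1,\psi_2)$ weakly in $H\times H$. By Rellich--Kondrachov we also get $\psi_{i,h}\to\psi_i$ strongly in $L^2(\T^{n-1})$. We then split into two cases.

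\emph{Case $(\psi_1,\psi_2)\ne(0,0)$.} The weak lower semicontinuity lemma gives $\partial^2 J^\phi(g_1,g_2)[\psi_1,\psi_2]\le 0$. This contradicts strict stability, provided strict stability is known on $H\times H$ and not only on $V\times V$. That is the main obstacle, since $(\psi_1,\psi_2)$ need only lie in $H\times H$. To handle it, I would use uniform ellipticity together with the boundedness of $\nabla g_i$ (continuous on the compact torus). These give a uniform lower bound $\nabla^2_{x'}\phi((-1)^i(-\nabla g_i,1))[\xi,\xi]\ge \lambda'\abs{\xi}^2$ with $\lambda'>0$. To see this, note that the vector $w=(-1)^i(-\nabla g_i,1)$ has $\abs{w}$ bounded, and $(\xi,0)$ has a component orthogonal to $w$ of size comparable to $\abs{\xi}$ because the last coordinate of $w$ is $\pm1$. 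The form on $H\times H$ is then a compact (in $L^2$) perturbation of a coercive form. If $\partial^2J^\phi(g_1,g_2)[\psi_1,\psi_2]\le 0$ with $(\psi_1,\psi_2)\ne 0$, I would first try to show directly that the infimum over the unit sphere of $H\times H$ is attained by a minimizer, and that this minimizer satisfies an Euler--Lagrange elliptic system with $C^1$ coefficients. Elliptic regularity would then place it in $C^1$, hence in $V\times V$, contradicting strict stability. Should regularity be delicate, the fallback is density. Since the form is continuous on $H\times H$, we approximate $(\psi_1,\psi_2)$ by elements of $V$. If the form at the limit is strictly negative, this immediately yields an element of $V\times V$ with negative value, a contradiction. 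The borderline value $0$ is where the minimizer/regularity argument is genuinely needed.

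\emph{Case $(\psi_1,\psi_2)=(0,0)$.} Here the $L^2$-convergence makes the Green term and the $\partial_n v$ term tend to $0$, because both are continuous in $L^2$. Hence the anisotropic term tends to $0$ as well. By the uniform ellipticity bound above, this term is at least $\lambda'\sum_i\norm{\nabla\psi_{i,h}}_{L^2}^2=\lambda'$. This contradicts $\partial^2 J^\phi(g_1,g_2)[\psi_{1,h},\psi_{2,h}]\to0$. With both cases excluded, $C^\phi_{(g_1,g_2)}>0$.
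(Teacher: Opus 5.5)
Your outline matches the paper's proof: a minimizing sequence on the unit sphere of $V\times V$, a weak limit in $H\times H$, and weak lower semicontinuity when the limit is nonzero. When the limit is zero, you use $L^2$-compactness of the Green and $\partial_n v$ terms together with the uniform ellipticity bound. Your orthogonality argument for that bound is right; explicitly, $\nabla^2_{x'}\phi\rb{(-1)^i(-\nabla g_i,1)}[\xi,\xi]\ge\lambda\abs{\xi}^2\big/\rb{1+\abs{\nabla g_i}^2}^{3/2}$. Homogeneity then gives the second inequality. You diverge from the paper only in the nonzero-limit case. The paper simply asserts $\partial^2 J^\phi(g_1,g_2)[\psi_1,\psi_2]>0$ for the weak limit, which tacitly uses \eqref{eq:2.strictstab} on $H\times H$. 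You are right that this is not literally the hypothesis, and your density argument correctly rules out a strictly negative value.

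The gap is the remaining borderline case $\partial^2 J^\phi(g_1,g_2)[\psi_1,\psi_2]=0$, which rests entirely on your regularity claim, and that claim is wrong as stated. The Euler--Lagrange system for such a $(\psi_1,\psi_2)$ does not have $C^1$ coefficients. Its principal part is $-\dive\rb{A_i\nabla\psi_i}$ with $A_i(x')=\nabla^2_{x'}\phi\rb{(-1)^i(-\nabla g_i(x'),1)}$, which is merely continuous because $\phi\in C^2$ and $g_i\in C^1$. Upgrading $g_i$ to $C^{1,\alpha}$ via criticality does not help, since $\nabla^2\phi$ is only continuous. Divergence-form equations with continuous coefficients give $W^{1,p}$ for every $p<\infty$, but in general not $C^1$ when $n-1\ge 2$; that requires H\"older or Dini continuity. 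So you cannot conclude $(\psi_1,\psi_2)\in V\times V$.

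This is not a removable technicality. Horizontal translations leave $J^\phi$ invariant, so for a critical non-flat strip, $(\partial_k g_1,\partial_k g_2)$ is a nonzero Jacobi field on which the second variation vanishes. It is only as regular as $\nabla g_i$; in particular, if $g_i\in C^2$ it already violates \eqref{eq:2.strictstab}.

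Your argument does close where it is actually needed:
\begin{itemize}
\item For $n=2$ the system is an ODE. $A_i\psi_i'$ is absolutely continuous and $A_i$ is continuous and positive, so $\psi_i\in C^1$.
\item For the lamella, the only strip to which the proposition is later applied, the coefficients are constant. The form is diagonal in the Fourier modes in $x'$, so strict stability on $V\times V$ and on $H\times H$ coincide.
\end{itemize}
For a general strip, the honest fix is to take strict stability on $H\times H$ as the hypothesis, which is what the paper's proof really uses.
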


\begin{proof}

    Consider a minimizing sequence $\cb{(\psi_{1, h}, \psi_{2, h})}_{h \in \N} \subset V \times V$ 
    for \eqref{eq:4.coercinf}.
    By reflexivity of $H \times H$, up to a subsequence there exists $(\psi_1, \psi_2) \in H \times H$ with
    $(\psi_{1, h}, \psi_{2, h}) \rightharpoonup (\psi_1, \psi_2)$ weakly in $H \times H$ as $h \to \infty$.
    If $(\psi_1, \psi_2) \ne (0, 0)$, by weak lower semicontinuity of $\partial^2 J^\phi(g_1, g_2)$ 
    and strict stability of $(g_1, g_2)$ one obtains
    \begin{equation*}
        C_{(g_1, g_2)}^\phi = \lim_{h \to \infty} \partial^2 J^\phi(g_1, g_2) \sb{\psi_{1, h}, \psi_{2, h}}
        \ge \partial^2 J^\phi(g_1, g_2) \sb{\psi_1, \psi_2} > 0.
    \end{equation*}
    If instead $(\psi_1, \psi_2) = (0, 0)$, then again by the Rellich-Kondrachov Theorem
    $\psi_{i, h} \to 0$ strongly in $L^2(\T^{n - 1})$ as $h \to \infty$ for $i = 1, 2$. Since $\phi$ is uniformly elliptic for some constant $\lambda > 0$, it follows that
    \begin{align*}
        C_{(g_1, g_2)}^\phi &= \lim_{h \to \infty} \partial^2 J^\phi(g_1, g_2) \sb{\psi_{1, h}, \psi_{2, h}} \\
        &= \lim_{h \to \infty} \sum_{i = 1}^2 \int_{\T^{n - 1}} 
        \nabla_{x'}^2 \phi \rb{(- 1)^i \big( - \nabla g_i(x'), 1 \big)}
        \sb{\nabla \psi_{i, h}(x'), \nabla \psi_{i, h}(x')} \de x' \\
        &\ge \limsup_{h \to \infty} 
        \sum_{i = 1}^2 \int_{\T^{n - 1}} \dfrac{\lambda}{\sqrt{1 + \abs{\nabla g_i}^2}} 
        \abs{\nabla \psi_{i, h}(x')}^2 \de x' \\
        &\ge \dfrac{\lambda}{1 + \norm{\nabla g_1}_\infty + \norm{\nabla g_2}_\infty} 
        \limsup_{h \to \infty} \sum_{i = 1}^2 \norm{\nabla \psi_{i, h}}_{L^2(\T^{n - 1})}^2
        = \dfrac{\lambda}{1 + \norm{\nabla g_1}_\infty + \norm{\nabla g_2}_\infty} > 0.
    \end{align*}
    The second part of the statement follows easily 
    by noting that $\partial^2 J^\phi(g_1, g_2)$ is a quadratic form, namely,
    $\partial^2 J^\phi(g_1, g_2) [\alpha \psi_1, \alpha \psi_2] 
    = \alpha^2 \partial^2 J^\phi(g_1, g_2) [\psi_1, \psi_2]$
    for any $\alpha \in \R$, $(\psi_1, \psi_2) \in V \times V$.
    \qedhere
    
\end{proof}

To prove \cref{thm:4.C1infty} we also need the following lemma, which shows continuity of $\partial^2 J^\phi$
with respect to the $C^1$-distance $d_{C^1}$.

\begin{lem} \label{lem:4.secvarcont}

    Let $(g_1, g_2) \in \St_M$.
    Then, for any $\varepsilon > 0$ there exists $\delta = \delta(\varepsilon) > 0$ such that
    \begin{equation*}
        \Big| \partial^2 J^\phi(g_1, g_2)[\psi_1, \psi_2] - \partial^2 J^\phi(h_1, h_2)[\psi_1, \psi_2] \Big| 
        < \varepsilon \norm{(\psi_1, \psi_2)}_{V \times V}^2 
        \qquad \qquad \text{for any } (\psi_1, \psi_2) \in V \times V,
    \end{equation*}
    for each $(h_1, h_2) \in \St_M$ with $d_{C^1} ((g_1, g_2), (h_1, h_2)) < \delta$.
    
\end{lem}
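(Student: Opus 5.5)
We compare the two second variations term by term using \eqref{eq:3.secvar}, and bound each difference by a quantity that tends to zero as $d_{C^1}((g_1,g_2),(h_1,h_2))\to 0$, times $\norm{(\psi_1,\psi_2)}_{V\times V}^2$. Before doing so we use translation invariance, via \cref{rem:4.C1est}. Replacing $(h_1,h_2)$ by a suitable translate and translating $\psi_i$ accordingly leaves $\norm{\cdot}_V$ unchanged. We may therefore assume $\norm{h_i-g_i}_\infty+\norm{\nabla h_i-\nabla g_i}_\infty\le 2\,d_{C^1}$ for $i=1,2$. The Poincaré-Wirtinger inequality \eqref{eq:2.poiwirt} lets us control $\norm{\psi_i}_{L^2}^2$ by $(n-1)\norm{\psi_i}_V^2$, so it suffices to bound each difference by $o(1)\big(\norm{\psi_i}_{L^2}^2+\norm{\nabla\psi_i}_{L^2}^2\big)$.

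\emph{Anisotropic term.} The vectors $(-1)^i(-\nabla g_i,1)$ all have norm at least $1$ and lie in a compact set $K$ bounded away from the origin. For $\delta\le 1$ the same holds for the $h_i$. On a compact neighbourhood of $K$ the Hessian $\nabla^2\phi$ is uniformly continuous, since $\phi\in C^2(\R^n\setminus\cb{0})$. Hence $\norm{\nabla^2_{x'}\phi((-1)^i(-\nabla g_i,1))-\nabla^2_{x'}\phi((-1)^i(-\nabla h_i,1))}_\infty\le\omega(\delta)$, where $\omega$ is a modulus of continuity, and the difference of the third terms is at most $\omega(\delta)\sum_i\norm{\nabla\psi_i}_{L^2}^2$.

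\emph{Curvature-type term.} From \eqref{eq:2.solestimates2} with $p>n$ we get $\norm{v_{(g_1,g_2)}-v_{(h_1,h_2)}}_{C^1}\le C\abs{(g_1,g_2)\Delta(h_1,h_2)}\le C\delta$. We also need $\partial_n v_{(g_1,g_2)}(x',g_i(x'))-\partial_n v_{(g_1,g_2)}(x',h_i(x'))\to0$ uniformly. This follows from the uniform continuity of $\nabla v_{(g_1,g_2)}$ on $\T^n$ (it is $C^{0,\alpha}$ by Morrey), applied with $\norm{g_i-h_i}_\infty\le 2\delta$. The second terms therefore differ by $o(1)\sum_i\norm{\psi_i}_{L^2}^2$.

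\emph{Green's term.} This is the main obstacle, because $G$ is singular on the diagonal. We write the Green's term as $\norm{\mu_g}_{H^{-1}}^2$, with $\mu_g=\sum_i(-1)^i\psi_i(x')\,\cdot\,(\text{pushforward of }\mathcal L^{n-1}\text{ by }x'\mapsto(x',g_i(x')))$; this is the measure of \cref{rem:3.greenterm}, written through the pushforward. The term equals $\dpa{\mu_g,w_g}$ with $w_g$ the potential of $\mu_g$, and the difference of the Green's terms is bounded by $\norm{\mu_g-\mu_h}_{H^{-1}}\big(\norm{\mu_g}_{H^{-1}}+\norm{\mu_h}_{H^{-1}}\big)$. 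The estimate in \cref{rem:3.greenterm} gives $\norm{\mu_g}_{H^{-1}}\le C\norm{\psi}_{H^1}$, with $C$ depending only on $\norm{\nabla g_i}_\infty$; this is done by taking $\Psi(x',x_n)=\psi_i(x')$ extended suitably, and is applied to $\mu_h$ in the same way.

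For $\mu_g-\mu_h$, we test against $\varphi\in H^1(\T^n)$. On each graph we write
\[
\varphi(x',g_i(x'))-\varphi(x',h_i(x'))=\int_{h_i(x')}^{g_i(x')}\partial_n\varphi(x',s)\de s .
\]
Using Cauchy--Schwarz in $s$ and then in $x'$, the resulting pairing is bounded by $\norm{g_i-h_i}_\infty^{1/2}\norm{\nabla\varphi}_{L^2}\norm{\psi_i\mathbbm 1}_{L^2(\T^{n-1})}$. Carrying this out rigorously requires an extra factor, namely $\psi_i$ viewed as a function of $x'$ multiplied by the indicator of the thin layer between the two graphs. This yields $\norm{\mu_g-\mu_h}_{H^{-1}}\le C\delta^{1/2}\norm{\psi}_{L^2}$.

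Combining the three estimates gives the claim, with $\varepsilon$ proportional to $\omega(\delta)+o(1)+\delta^{1/2}$.
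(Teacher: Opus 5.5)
Your proof follows the paper's argument essentially step for step. You split \eqref{eq:3.secvar} into the same three pieces (anisotropic, curvature, Green's) and use the same tool for each, only written quantitatively instead of along sequences. You are also slightly more careful than the paper. You make explicit the uniform bound on $\norm{\mu_g}_{H^{-1}(\T^n)}+\norm{\mu_h}_{H^{-1}(\T^n)}$, which is needed to turn $\norm{\mu_g-\mu_h}_{H^{-1}(\T^n)}\to 0$ into convergence of the Green's terms. And your rate $\norm{g_i-h_i}_\infty^{1/2}$ is the correct one; the paper's displayed bound drops the square root, harmlessly.

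The step that does not work as you justify it is the initial reduction by translation, although the paper makes the same tacit move. A horizontal translation gives $\partial^2 J^\phi(h_1,h_2)[\psi_1,\psi_2]=\partial^2 J^\phi\big(h_1(\cdot+\tau'),h_2(\cdot+\tau')\big)\big[\psi_1(\cdot+\tau'),\psi_2(\cdot+\tau')\big]$. However, $\partial^2 J^\phi(g_1,g_2)[\psi_1,\psi_2]$ is not invariant under translating the $\psi_i$ alone. So after your reduction you are comparing the two second variations at different test pairs.

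In fact, with the minimum over horizontal translations built into $d_{C^1}$, the statement cannot hold for a general strip. Take $h_i=g_i(\cdot-\tau')$, so that $d_{C^1}=0$. Test with $\psi_2=0$ and $\psi_1$ normalized in $V$ with gradient concentrating near a point $x_0'$; then $\norm{\psi_1}_{L^2}\to 0$ and both nonlocal terms become negligible. The difference is then governed by $\nabla^2_{x'}\phi\big((\nabla g_1,-1)\big)$ evaluated at $x_0'$ versus at $x_0'-\tau'$. This is not small already for isotropic $\phi$ and non-flat $g_1$.

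What your estimates do prove is the aligned version, with $\sum_i\norm{\nabla h_i-\nabla g_i}_\infty<\delta$. There only the vertical normalization of \cref{rem:4.C1est} is needed, and it is harmless. The vertical shift is the same constant for $i=1,2$, because both strips have volume $M$. It also leaves $\partial^2 J^\phi(h_1,h_2)[\psi_1,\psi_2]$ unchanged with the same $\psi_i$.

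This aligned version is all that is used later. In the proof of \cref{thm:4.C1infty} the strips $(g_1+t\psi_1,g_2+t\psi_2)$ are already aligned with $(g_1,g_2)$. For the lamella, horizontal translations play no role.
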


\begin{proof}

    Consider a sequence $\cb{(g_{1, h}, g_{2, h})}_{h \in \N} \subset \St_M$
    with $d_{C^1} ((g_1, g_2), (g_{1, h}, g_{2, h})) \to 0$ as $h \to \infty$.
    By means of \cref{rem:4.C1est}, we may assume that
    \begin{equation*}
        \lim_{h \to \infty} \norm{g_{i, h} - g_i}_\infty = 0, \qquad
        \lim_{h \to \infty} \norm{\nabla g_{i, h} - \nabla g_i}_\infty = 0, \qquad \qquad i = 1, 2.
    \end{equation*}
    We prove that
    \begin{equation*} 
        \lim_{h \to \infty} \Big| \partial^2 J^\phi(g_1, g_2)[\psi_{1, h}, \psi_{2, h}]
        - \partial^2 J^\phi(g_{1, h}, g_{2, h})[\psi_{1, h}, \psi_{2, h}] \Big| = 0
    \end{equation*}
    for an arbitrary sequence $\cb{\psi_{1, h}, \psi_{2, h}}_{h \in \N} \in V \times V$ with 
    $\norm{(\psi_{1, h}, \psi_{2, h})}_{V \times V} = 1$ for all $h \in \N$.
    To do that, we break down $\partial^2 J^\phi(g_{1, h}, g_{2, h})$ in $3$ pieces and 
    show convergence to the corresponding terms of $\partial^2 J^\phi(g_1, g_2)$. 
    \begin{itemize}
        \item 
            Let us show that for $i = 1, 2$
            \begin{multline*}
                I_i \defeq \lim_{h \to \infty}
                \Bigg|\int_{\T^{n - 1}} \nabla_{x'}^2 \phi\rb{(- 1)^i \big( - \nabla g_i(x'), 1 \big)} 
                \sb{\nabla \psi_{i, h}(x'), \nabla \psi_{i, h} (x')} \de x' \\
                - \int_{\T^{n - 1}} \nabla_{x'}^2 \phi\rb{(- 1)^i \big( - \nabla g_{i, h}(x'), 1 \big)} 
                \sb{\nabla \psi_{i, h}(x'), \nabla \psi_{i, h} (x')} \de x' \Bigg|
                = 0.
            \end{multline*}
            Indeed it holds
            \begin{equation*}
                I_i \le \liminf_{h \to \infty}
                \norm{\nabla_{x'}^2 \phi \rb{(- 1)^i \big( - \nabla g_i(x'), 1 \big)} 
                - \nabla_{x'}^2 \phi \rb{(- 1)^i \big( - \nabla g_{i, h}(x'), 1 \big)}}_\infty
                = 0,
            \end{equation*}
            on noting that $\phi \in C^2(\R^n \setminus \cb{0})$ and 
            $\lim_{h \to \infty} \norm{\nabla g_{i, h} - \nabla g_i}_\infty = 0$.

        \item 
            We now prove that for $i = 1, 2$
            \begin{multline*}
                II_i \defeq \lim_{h \to \infty} \Bigg| \int_{\T^{n - 1}} \partial_n v_{(g_1, g_2)} 
                \big( x', g_i(x') \big) \psi_{i, h}(x')^2 \de x' \\
                - \int_{\T^{n - 1}} \partial_n v_{(g_{1, h}, g_{2, h})} 
                \big( x', g_{i, h}(x') \big) \psi_{i, h}(x')^2 \de x' \Bigg| 
                = 0.
            \end{multline*}
            By \cref{rem:2.solreg} we know that 
            $v_{(g_{1, h}, g_{2, h})} \to v_{(g_1, g_2)}$ in $C^1(\T^n)$ as $h \to \infty$,
            implying
            \begin{align*}
                II_i &\le 
                \begin{multlined}[t]
                    \liminf_{h \to \infty} 
                    \Bigg( \abs{\int_{\T^{n - 1}} \rb{\partial_n v_{(g_1, g_2)} \big( x', g_i(x') \big) 
                    - \partial_n v_{(g_1, g_2)} \big( x', g_{i, h}(x') \big)} \psi_{i, h}(x')^2 \de x'} \\
                    + \abs{\int_{\T^{n - 1}} \rb{\partial_n v_{(g_1, g_2)} \big( x', g_{i, h}(x') \big) 
                    - \partial_n v_{(g_{1, h}, g_{2, h})} \big( x', g_{i, h}(x') \big)} 
                    \psi_{i, h}(x')^2 \de x'} \Bigg)
                \end{multlined} \\
                &\le (n - 1) \liminf_{h \to \infty} \norm{v_{(g_1, g_2)} - v_{(g_{1, h}, g_{2, h})}}_{C^1(\T^n)} = 0,
            \end{align*}
            where we also used the continuity of $\partial_n v_{(g_1, g_2)}$ 
            and $\lim_{h \to \infty} \norm{g_{i, h} - g_i}_\infty = 0$,
            as well as the Poincaré-Wirtinger inequality \eqref{eq:2.poiwirt}.

        \item
            Using the same notation of \cref{rem:3.greenterm},
            set $\mu_h = \sum_{i = 1}^2 (- 1)^i \underline{\psi_{i, h}} \Ha^{n - 1} \mres \Gamma_{g_i}$
            and $\mu_h' = \sum_{i = 1}^2 (- 1)^i \underline{\psi_{i, h}} \Ha^{n - 1} \mres \Gamma_{g_ {i, h}}$.
            Noting that by the same remark both $\mu_h$ and $\mu_h'$ belong to $H^{- 1}(\T^n)$,
            we show that
            \begin{equation*}
                III \defeq \lim_{h \to \infty} \norm{\mu_h - \mu_h'}_{H^{- 1}(\T^n)} = 0.
            \end{equation*}
            By means of \eqref{eq:2.greenpos} and \eqref{eq:2.measolest} this is sufficient to prove 
            convergence for the Green's term.
            Hence, given $\varphi \in H^1(\T^n)$, it follows that
            \begin{align*}
                \abs{\dpa{\mu_h - \mu_h', \varphi}} 
                &= \abs{\int_{\T^n} \varphi \de \mu_h - \int_{\T^n} \varphi \de \mu_h'} \\
                &\le \sum_{i = 1}^2 \abs{\int_{\T^{n - 1}} 
                \Big( \varphi \big( x', g_i(x') \big) 
                - \varphi \big( x', g_{i, h}(x') \big) \Big) \psi_{i, h}(x') \de x'} \\
                &\le \sum_{i = 1}^2 \int_{\T^{n - 1}} \abs{\psi_{i, h}(x')} 
                \abs{\int_{g_{i, h}(x')}^{g_i(x')}
                \partial_n \varphi(x', x_n) \de x_n} \de x' \\
                &\le \sum_{i = 1}^2 \norm{\psi_{i, h}}_{L^2(\T^{n - 1})}
                \norm{g_{i_h} - g_i}_\infty \norm{\nabla \varphi}_{L^2(\T^n)} \\
                &\le \sqrt{n - 1} \norm{\varphi}_{H^1(\T^n)} \sum_{i = 1}^2 \norm{g_{i_h} - g_i}_\infty
            \end{align*}
            again by the Poincaré-Wirtinger inequality \eqref{eq:2.poiwirt}.
            Since $\lim_{h \to \infty} \norm{g_{i, h} - g_i}_\infty = 0$ the claim follows.
        
    \end{itemize}
    Observing that
    \begin{equation*} 
        \lim_{h \to \infty} \Big| \partial^2 J^\phi(g_1, g_2)[\psi_{1, h}, \psi_{2, h}]
        - \partial^2 J^\phi(g_{1, h}, g_{2, h})[\psi_{1, h}, \psi_{2, h}] \Big| \le I_1 + I_2 + II_1 + II_2 + III = 0,
    \end{equation*}
    we conclude the proof of the lemma.
    \qedhere
    
\end{proof}

By virtue of \cref{lem:4.secvarcont}, we can finally pass to the proof of \cref{thm:4.C1infty}.

\begin{proof}[Proof of \cref{thm:4.C1infty}]

    First of all, by \cref{prop:4.coerc} there exists $C_{(g_1, g_2)}^\phi > 0$ such that
    \begin{equation*}
        \partial^2 J^\phi(g_1, g_2) [\psi_1, \psi_2] 
        \ge C_{(g_1, g_2)}^\phi \norm{(\psi_1, \psi_2)}_{V \times V}^2
    \end{equation*}
    for any $(\psi_1, \psi_2) \in V \times V$.
    Choosing for instance $\varepsilon = C_{(g_1, g_2)}^\phi/2$, let $\delta > 0$ be the same of \cref{lem:4.secvarcont}.
    Consider $(h_1, h_2) \in \mathcal{S}_M$ with $d_{C^1}((g_1, g_2), (h_1, h_2)) < \delta$, and let $\tau' \in \T^{n - 1}$ attain the minimum in \eqref{eq:4.C1dist}.
    Define $\psi_1, \psi_2 \in V$ by
    \begin{equation*}
        \psi_i(x') \defeq h_i(x' + \tau') - g_i(x') - \int_{\T^{n - 1}} (h_i - g_i), \qquad \qquad x' \in \T^{n - 1},
        \qquad i = 1, 2.
    \end{equation*}
    The $(\psi_1, \psi_2)$-deformation $\cb{(g_1, g_2)_t}_t$ of $(g_1, g_2)$ satisfies,
    up to a translation, $(g_1, g_2)_1 = (h_1, h_2)$, and obviously $(g_1, g_2)_0 = (g_1, g_2)$.
    By considering the Taylor expansion of $J^\phi((g_1, g_2)_t)$ with integral remainder with respect to $t$, 
    one gets
    \begin{equation*}
        \begin{split}
            J^\phi(h_1, h_2) - J^\phi(g_1, g_2) &= J^\phi((g_1, g_2)_1) - J^\phi((g_1, g_2)_0) \\
            &= t \dfrac{d}{d t} J^\phi((g_1, g_2)_t) \restrict{t = 0} +
            \int_0^1 (1 - t) \dfrac{d^2}{d t^2} J^\phi \big( (g_1, g_2)_t \big) \de t \\
            &= \int_0^1 (1 - t) \partial^2 J^\phi \big( (g_1, g_2)_t \big)[\psi_1, \psi_2] \de t,
        \end{split}
    \end{equation*}
    on noting that $(g_1, g_2)$ is critical for $J^\phi$ by assumption.
    Pointing out that
    $d_{C^1} ((g_1, g_2), (g_1, g_2)_t) \le t d_{C^1} ((g_1, g_2), (h_1, h_2)) < \delta$ for any $t \in [0, 1]$,
    we may apply \cref{prop:4.coerc} and \cref{lem:4.secvarcont} to obtain
    \begin{equation*}
        J^\phi(h_1, h_2) - J^\phi(g_1, g_2) 
        \ge \dfrac{C_{(g_1, g_2)}^\phi}{2} \int_0^1 (1 - t) \norm{(\psi_1, \psi_2)}_{V \times V}^2 \de t
        = \dfrac{C_{(g_1, g_2)}^\phi}{4} \norm{(\psi_1, \psi_2)}_{V \times V}^2.
    \end{equation*}
    The statement follows, since
    \begin{align*}
        \norm{(\psi_1, \psi_2)}_{V \times V}^2 
        &= \sum_{i = 1}^2 \norm{\nabla h_i(\cdot + \tau') - \nabla g_i(\cdot)}_{L^2(\T^{n - 1})}^2 \\
        &\ge \min_{\nu' \in \T^{n - 1}} \sum_{i = 1}^2 \norm{\nabla h_i(\cdot + \nu') - \nabla g_i(\cdot)}_{L^2(\T^{n - 1})}^2.
        \qedhere
    \end{align*}
    
\end{proof}

\section{From \texorpdfstring{$C^1$}--minimality to \texorpdfstring{$L^1$}--minimality} \label{sec:5}

Combining \cref{prop:3.lamstab} and \cref{thm:4.C1infty} 
one may already obtain a $C^1$-local minimality result for the lamella 
for $\gamma$ sufficiently small.
However, in this section we refine \cref{thm:4.C1infty} by showing that the result holds 
also for the $L^1$-distance in the class of the subsets of the torus (\cref{thm:2.strictminL1}),
which may instead be used to prove $L^1$-local minimality for the lamella,
again for $\gamma$ sufficiently small (\cref{cor:2.lamminL1}).

To do that, we first recall the definition of almost minimizer for the anisotropic perimeter in $\R^n$.

\begin{defi}
    
    Given a surface tension $\phi$ and two constants $\Lambda \ge 0$ and $r_0 > 0$,
    a set of locally finite perimeter $E \subset \R^n$ is said to be a \textit{$(\phi, \Lambda, r_0)$-perimeter minimizer} if
    for any $x \in \R^n$ and $r < r_0$ it holds
    \begin{equation*}
        P^\phi \big( E; B(x, r) \big) \le P^\phi \big( F; B(x, r) \big) + \Lambda \abs{E \Delta F}
    \end{equation*}
    for any set of locally finite perimeter $F \subset \R^n$ with $E \Delta F \cc B(x, r)$.
    
\end{defi}
    
By Theorem 4.6 of \cite{Neu16} we have the following regularity result for anisotropic almost minimizers.

\begin{thm} \label{thm:5.almostreg}

    Let $\phi$ be a uniformly elliptic surface tension, 
    $\cb{E_h}_{h \in \N}$ a sequence of $(\phi, \Lambda, r_0)$-minimizers for some $\Lambda \ge 0$, $r_0 > 0$,
    and $E \subset \R^n$ a set with boundary of class $C^2$, 
    such that $\1_{E_h} \to \1_E$ in $L^1(\R^n)$ as $h \to \infty$.
    Then, for $h$ sufficiently large, there exist functions $\psi_h \in C^1(\partial E)$ such that
    \begin{equation*}
        \partial E_h = \Big\{ x + \psi_h(x) \nu_E(x) \in \R^n : x \in \partial E \Big\}
    \end{equation*}
    and $\norm{\psi_h}_{C^1(\partial E)} \to 0$ as $h \to \infty$.
    
\end{thm}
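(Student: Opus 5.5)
The plan is to derive the statement from the $\varepsilon$-regularity theory for almost minimizers of elliptic integrands, combined with a compactness argument, in the spirit of the isotropic results used in \cite{AFM13}.

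\textbf{Step 1 (uniform density estimates).} Since $\phi$ is uniformly elliptic, there are constants $0<\alpha\le\beta$ with $\alpha\abs{\nu}\le\phi(\nu)\le\beta\abs{\nu}$. I would compare each $E_h$ with $E_h\setminus B(x,r)$ and with $E_h\cup B(x,r)$, and use the almost-minimality inequality with constants $\Lambda, r_0$ that are the same for every $h$. This yields volume and perimeter density bounds
\begin{equation*}
    c\,r^n\le\abs{E_h\cap B(x,r)}\le(1-c)\,\omega_n r^n,
    \qquad
    P\big(E_h;B(x,r)\big)\ge c\,r^{n-1},
\end{equation*}
for $x\in\partial E_h$ and $r<\min\{r_0,r_1(\Lambda)\}$, with $c$ independent of $h$. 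Combined with $\1_{E_h}\to\1_E$ in $L^1$, these bounds upgrade $L^1$-convergence to Kuratowski (Hausdorff on compact sets) convergence of $\partial E_h$ to $\partial E$. Consequently, for $h$ large, $\partial E_h$ lies in an arbitrarily thin tubular neighbourhood of $\partial E$.

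\textbf{Step 2 (smallness of the excess).} Fix $x\in\partial E$. Since $\partial E\in C^2$, the excess of $E$ in $B(x,r)$ is $O(r)$. Almost-minimality gives convergence of perimeters $P^\phi(E_h;B)\to P^\phi(E;B)$ on balls with $P(E;\partial B)=0$. Because $\phi$ is strictly convex, this is then converted into convergence of the Euclidean perimeter measures (a Reshetnyak-type argument). It follows that the spherical excess
\begin{equation*}
    \mathbf e(E_h,y,r)=r^{1-n}\int_{\partial^*E_h\cap B(y,r)}\frac{\abs{\nu_{E_h}-\nu_E(x)}^2}{2}\,d\Ha^{n-1}
\end{equation*}
is below the $\varepsilon$-regularity threshold, uniformly for $y\in\partial E_h\cap B(x,r/2)$, once $r$ is fixed small and $h$ is large. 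A finite covering of the compact manifold $\partial E$ (or its periodic fundamental domain) makes this uniform in $x$.

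\textbf{Step 3 ($\varepsilon$-regularity and passage to the limit).} I would then invoke the anisotropic excess-decay theorem for $(\phi,\Lambda,r_0)$-minimizers, which is the key ingredient and, I expect, the content of \cite[Theorem 4.6]{Neu16}; its constants depend only on $n$, $\lambda$, $\norm{\phi}_{C^2(\S^{n-1})}$, $\Lambda$ and $r_0$. It shows that in each cylinder around $x$ oriented by $\nu_E(x)$, $\partial E_h$ is the graph of a function $u_h$ with $\norm{u_h}_{C^{1,\theta}}$ bounded uniformly in $h$. By Arzel\`a--Ascoli and the identification of the limit with the graph of $\partial E$, we get $u_h\to u$ in $C^1$, where $u$ is the local graph function of $\partial E$.

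Finally, I would pass from cylindrical graphs to normal graphs over $\partial E$. Since $\partial E$ is $C^2$, the nearest-point projection onto $\partial E$ is $C^1$ on a tubular neighbourhood. Applying the implicit function theorem to $x+t\nu_E(x)\in\partial E_h$ then gives $\psi_h\in C^1(\partial E)$ with $\norm{\psi_h}_{C^1}\to0$.

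I expect Step 2 to be the main obstacle: obtaining uniform smallness of the anisotropic excess at every point and every small scale from mere $L^1$-convergence. This is where the strict convexity of $\phi$ and the uniform constants in the almost-minimality are genuinely needed.
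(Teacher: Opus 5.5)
Your outline is correct. The paper gives no argument of its own here: it quotes the statement directly from \cite[Theorem 4.6]{Neu16}, the anisotropic counterpart of the improved-convergence theorem for almost minimizers used in the isotropic setting of \cite{AFM13} (cf.\ \cite{Mag12}). The scheme you describe is the standard proof behind that result: uniform density estimates, Kuratowski convergence of $\partial E_h$, smallness of the excess via convergence of the $\phi$-perimeters and a Reshetnyak-type argument, $\varepsilon$-regularity with uniform $C^{1,\theta}$ bounds, Arzel\`a--Ascoli, and conversion to a normal graph through the $C^1$ tubular coordinates of the $C^2$ surface $\partial E$.

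The citation buys brevity. Your reconstruction shows where uniform ellipticity is actually used, namely in the constants of the $\varepsilon$-regularity theorem and in passing from anisotropic to Euclidean excess. It also shows that the finite covering needs $\partial E$ to be compact or periodic, which the statement leaves implicit. In the paper's application $E$ is the periodic extension of the lamella, so the convergence is really in $L^1(\T^n)$ rather than $L^1(\R^n)$.

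One correction concerns Step 3. As the paper uses it, \cite[Theorem 4.6]{Neu16} is this very convergence statement, not an excess-decay lemma. Invoking it there would therefore be circular. The $\varepsilon$-regularity input must instead come from the regularity theory for almost minimizers of elliptic integrands (Almgren--Schoen--Simon, Bombieri, Duzaar--Steffen), or from the $\varepsilon$-regularity result on which Neumayer's Theorem 4.6 itself rests.
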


We also need the following technical lemma.

\begin{lem} \label{lem:4.boundper}

    Let $L = (\ell_1, \ell_2)$ be the lamella of volume $M \in (0, 1)$, and let $\phi$ be a surface tension.
    Then, defining a constant $C_4 = C_4(\phi, M) > 0$ by
    \begin{equation*}
        C_4 \defeq \frac{2 \max_{\S^{n - 1}} \phi}{M (1 - M)},
    \end{equation*}
    it holds
    \begin{enumerate}
        \item 
            $P_{\T^n}^\phi (L) - P_{\T^n}^\phi(F) \le C_4 \abs{F \Delta L}$;

        \item 
            $P_{\T^n}^\phi (L \cap F) - P_{\T^n}^\phi(F) \le C_4 \abs{F \setminus L}$;

        \item 
            $P_{\T^n}^\phi (L \cup F) - P_{\T^n}^\phi(F) \le C_4 \abs{L \setminus F}$;
            
    \end{enumerate}
    for any set of finite perimeter $F \subset \T^n$.
    
\end{lem}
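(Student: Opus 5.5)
We will prove (2) and (3) with a calibration-type divergence argument, and then obtain (1) from them. To see that (1) follows, apply (3) to the set $L \cap F$ and then (2) to $F$:
\begin{equation*}
    P_{\T^n}^\phi(L) = P_{\T^n}^\phi\big(L \cup (L \cap F)\big) \le P_{\T^n}^\phi(L \cap F) + C_4 \abs{L \setminus F} \le P_{\T^n}^\phi(F) + C_4 \abs{F \setminus L} + C_4 \abs{L \setminus F}.
\end{equation*}
The right-hand side is $P_{\T^n}^\phi(F) + C_4\abs{F \Delta L}$. So it suffices to treat (2); the proof of (3) is symmetric, with the roles of $L$ and $L^c$ swapped, or can be obtained by applying (2) to complements in the torus.

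For (2), we first split the boundary of $L \cap F$. Up to $\Ha^{n-1}$-null sets,
\begin{equation*}
    P_{\T^n}^\phi(L \cap F) = P^\phi(F; L) + \int_{\partial L \cap F^{(1)}} \phi(\nu_L) \de \Ha^{n-1},
\end{equation*}
where $F^{(1)}$ denotes the points of density one of $F$. On the other hand $P_{\T^n}^\phi(F) = P^\phi(F;L) + P^\phi(F; \T^n \setminus \overline L)$ up to the part of $\reb F$ lying on $\partial L$, which only helps. Hence the claim reduces to showing that the contribution of the horizontal faces $\{x_n = \ell_i\} \cap F^{(1)}$ is at most $P^\phi(F; \T^n \setminus \overline L) + C_4 \abs{F \setminus L}$.

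To bound this contribution we construct a vector field $Z(x) = \eta(x_n)\,\xi(x_n)$ and apply the divergence theorem on $F \setminus L$. The ingredients are as follows.
\begin{itemize}
    \item[(a)] Choose $\xi_2 \in \partial\phi(e_n)$ and $\xi_1 \in \partial\phi(-e_n)$. By \cref{lem:2.subsurfchar}, $\xi_i \in \partial W_\phi$, so $\xi_i \cdot \nu \le \phi(\nu)$ for every $\nu$, and $\xi_2 \cdot e_n = \phi(e_n)$, $\xi_1 \cdot (-e_n) = \phi(-e_n)$.
    \item[(b)] In the region $\T^n \setminus L$, which is the periodic interval $(\ell_2, \ell_1 + 1)$ of length $1 - M$, let $\eta$ decrease linearly from $1$ at $x_n = \ell_2$ to $0$ at the midpoint, and grow back to $1$ at $x_n = \ell_1 + 1$.
    \item[(c)] Use the vector $\xi_2$ near the upper face and $\xi_1$ near the lower face, so that the two pieces glue continuously through the zero of $\eta$.
\end{itemize}
With this choice $Z$ is Lipschitz on $\T^n \setminus L$. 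On $\partial L$ it has flux equal to $\phi(\pm e_n)$ in the outward direction from $L$, that is, in the direction pointing into $F \setminus L$. Its divergence satisfies $\abs{\dive Z} \le \frac{2}{1-M}\max_{\S^{n-1}}\phi$, up to the constant comparing $\abs{\xi_i}$ with $\max_{\S^{n-1}} \phi$.

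Applying the divergence theorem to $F \setminus L$, the boundary $\partial(F\setminus L)$ consists of $\reb F \cap (\T^n \setminus \overline L)$, where $Z\cdot\nu_F \le \phi(\nu_F)$, and of $\partial L \cap F^{(1)}$, where $Z \cdot \nu = -\phi(\nu_L)$. This gives
\begin{equation*}
    \int_{\partial L \cap F^{(1)}} \phi(\nu_L) \de \Ha^{n-1} \le P^\phi(F; \T^n \setminus \overline L) + \frac{2\max\phi}{1-M}\abs{F\setminus L},
\end{equation*}
and the constant $\frac{2\max\phi}{1-M}$ is bounded by $C_4$. For (3) the same construction inside $L$, on an interval of length $M$, yields the factor $\frac{2}{M}$, which explains the $M(1-M)$ appearing in $C_4$.

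The main obstacle is the rigorous justification of the boundary decomposition for a general finite-perimeter $F$. One needs the trace of $F$ on the hyperplanes $\{x_n = \ell_i\}$ and the identity $\reb(F\setminus L) = (\reb F \setminus \overline L) \cup (\partial L \cap F^{(1)})$ up to null sets. A generic hyperplane avoids $\reb F$ charging it, but the fixed planes $\ell_i$ may not. I would handle this via the standard formulas for $\reb(E \cap F)$ from \cite{Mag12}, or by replacing $\ell_i$ with nearby generic levels $\ell_i \pm \varepsilon$ and passing to the limit using lower semicontinuity.
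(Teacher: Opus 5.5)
Your proof is correct and is essentially the paper's calibration argument. Both use a vector field with values in $\overline{W_\phi}$ that equals subgradients $x^\pm \in \partial\phi(\pm e_n)$ on the faces of $L$, the divergence theorem on $F \setminus L$ or $L \setminus F$, and the reduced-boundary formulas of \cite[Theorem 16.3]{Mag12}. The differences are cosmetic:
\begin{itemize}
\item The paper uses the single field $x^- + \varphi(x_n)(x^+ - x^-)$ on all of $\T^n$, whereas you use two fields, inside and outside $L$, each shrinking $\xi_i$ linearly to $0$ at the midpoint.
\item The paper proves (1) by calibrating directly, whereas you deduce (1) from (3) applied to $L \cap F$ followed by (2). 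That deduction is valid.
\item The paper obtains (3) from (2) via complements and $\tilde\phi(x) = \phi(-x)$, whereas you prove (3) directly with the inner field.
\end{itemize}

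At the fixed levels $\ell_i$ your stated boundary identities are incomplete. $\reb(F \setminus L)$ also contains $\{x \in \reb F \cap \partial L : \nu_F = -\nu_L\}$, and $\reb(L \cap F)$ also contains $\{x \in \reb F \cap \partial L : \nu_F = \nu_L\}$. Neither omission breaks the estimate. The first piece carries flux $Z \cdot \nu_F = -\phi(\nu_L) \le 0$, so it only helps. The second cancels against the identical term in $P_{\T^n}^\phi(F)$. This is exactly how the paper handles it, and (3) works symmetrically. Your generic-level alternative also works, because lower semicontinuity is applied to $P_{\T^n}^\phi(L_\varepsilon \cap F)$, the term that enters with a plus sign.
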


\begin{proof}

    From \cref{prop:2.surfchar} we infer that there exist $x^+, x^- \in \partial W_\phi$ with $x_n^\pm = \pm \phi(\pm e_n)$,
    moreover, since $x^\pm \cdot y \le \phi(y)$ for any $y \in \S^{n - 1}$, 
    we also have $\abs{x^\pm} \le \phi \rb{x^\pm / \abs{x^\pm}} \le \max_{\S^{n - 1}} \phi$.
    Let $ \varphi \in C^1(\T^1)$ be a function with Lipschitz constant smaller than $\frac{1}{M (1 - M)}$
    satisfying $\varphi(\ell_1) = \min_{\S^1} \varphi = 0$ and $\varphi(\ell_2) = \max_{\S^1} \varphi = 1$.
    Defining $T \in C^1(\T^n; \R^n)$ by
    \begin{equation*}
        T(x', x_n) \defeq x^- + \varphi(x_n) (x^+ - x^-), \qquad \qquad (x', x_n) \in \T^n,
    \end{equation*}
    we see that $T(x', \ell_1) = x^-$ and $T(x', \ell_2) = x^+$ for any $x' \in \T^{n - 1}$, 
    which by convexity of $\overline W_\phi$ allows to write $T \in C^1(\T^n; \overline W_\phi)$.
    Furthermore, $T$ satisfies
    \begin{equation*}
        \norm{\dive T}_\infty = \abs{x^+ - x^-} \norm{\varphi'}_\infty \le \frac{2 \max_{\S^{n - 1}} \phi}{M (1 - M)} = C_4. 
    \end{equation*}
    \begin{enumerate}
        \item 
            The first estimate follows from
            \begin{align*}
                P_{\T^n}^\phi (L) - P_{\T^n}^\phi(F)
                &= \int_{\partial L} \phi(\nu_L) \de \Ha^{n - 1}
                - \int_{\reb F} \phi(\nu_F) \de \Ha^{n - 1} \\
                &\le \int_{\partial L} T \cdot \nu_L \de \Ha^{n - 1}
                - \int_{\reb F} T \cdot \nu_F \de \Ha^{n - 1} \\
                &= \int_L \dive T - \int_F \dive T
                \le \norm{\dive T}_\infty \abs{F \Delta L} \le C_4 \abs{F \Delta L}.
            \end{align*}

        \item 
            For the second inequality we note that
            \begin{align*}
                \reb(L \cap F) 
                &= \rb{F^{(1)} \cap \partial L} \cup \rb{L \cap \reb F}
                \cup \cb{x \in \reb F \cap \partial L : \nu_F(x) = \nu_L(x)}, \\
                \reb(F \setminus L) 
                &= \rb{F^{(1)} \cap \partial L} \cup \rb{\reb F \setminus \overline L}
                \cup \cb{x \in \reb F \cap \partial L : \nu_F(x) = - \nu_L(x)},
            \end{align*}
            where $F^{(1)}$ is the set of points of density $1$ of $F$ (see for instance \cite[Theorem 16.3]{Mag12}).
            This implies
            \begin{align*}
                P_{\T^n}^\phi (L \cap F) - P_{\T^n}^\phi(F)
                &= \int_{\reb (L \cap F)} \phi(\nu_{L \cap F}) \de \Ha^{n - 1}
                - \int_{\reb F} \phi(\nu_F) \de \Ha^{n - 1} \\
                &= \int_{F^{(1)} \cap \partial L} \phi(\nu_L) \de \Ha^{n - 1}
                - \int_{\rb{\reb F \setminus \overline L} \cup \cb{\nu_F = - \nu_L}} \phi(\nu_F) \de \Ha^{n - 1} \\
                &\le \int_{F^{(1)} \cap \partial L} T \cdot \nu_L \de \Ha^{n - 1}
                - \int_{\rb{\reb F \setminus \overline L} \cup \cb{\nu_F = - \nu_L}} T \cdot \nu_F \de \Ha^{n - 1} \\
                &= - \int_{\reb (F \setminus L)} T \cdot \nu_{F \setminus L} \de \Ha^{n - 1}
                = - \int_{F \setminus L} \dive T \le C_4 \abs{F \setminus L}.
            \end{align*}

        \item 
            For the third estimate we may argue similarly as in the previous point.
            Alternatively, setting $\tilde \phi(x) \defeq \phi(- x)$ for any $x \in \R^n$, it holds
            \begin{equation*}
                P_{\T^n}^\phi (L \cup F) - P_{\T^n}^\phi(F) 
                = P_{\T^n}^{\tilde \phi} (L^c \cap F^c) - P_{\T^n}^{\tilde \phi}(F^c) 
                \le C_4 \abs{F^c \setminus L^c} = C_4 \abs{L \setminus F}.
                \qedhere
            \end{equation*}
            
    \end{enumerate}
    
\end{proof}

We are now ready to prove \cref{thm:2.strictminL1}.

\begin{proof}[Proof of \cref{thm:2.strictminL1}]

    By \cref{thm:4.C1infty} there exists $\delta > 0$ such that
    \begin{equation} \label{eq:5.C1min}
        J^\phi(g_1, g_2) 
        \ge J^\phi(L) + \frac{C_L^\phi}{4} \rb{\sum_{i = 1}^2 \norm{\nabla g_i}_{L^2(\T^{n - 1})}^2}
    \end{equation}
    for any strip $(g_1, g_2) \in \St_M$ with $d_{C^1}((g_1, g_2), L) < \delta$.
    We assume by contradiction that there exists a sequence $\cb{E_h}_{h \in \N}$ 
    of sets of finite perimeter in $\T^n$, with $\abs{E_h} = M$ for any $h \in \N$,
    such that $E_h \to L = (\ell_1, \ell_2)$ and
    \begin{equation*}
        J^\phi(E_h) < J^\phi(L) + \frac{C_L^\phi}{16 (n - 1)} \big( d_{L^1}(E_h, L) \big)^2, \qquad \text{for any } h \in \N.
    \end{equation*}
    Up to translating the sets, we may assume that 
    $\1_{E_h} \to \1_L$ in $L^1(\T^n)$ as $h \to \infty$.
    For any $h \in \N$ let $F_h \subset \T^n$ be a minimizer for the penalized functional
    \begin{equation*}
        J_h^\phi(F) = J^\phi(F) + \Lambda_1 d_{L^1}(F, E_h) + \Lambda_2 \big| \abs{F} - M \big|, \qquad \qquad F \subset \T^n,
    \end{equation*}
    where the constants $\Lambda_1, \Lambda_2 > 0$ will be chosen later.
    Note that the existence of such a minimizer follows as in \cref{rem:2.minexst}.
    Up to a subsequence, we may also assume that $F_h \to \tilde F$, where $\tilde F \subset \T^n$ is a minimizer of 
    \begin{equation*}
        J^\phi(F) + \Lambda_1 d_{L^1}(F, L) + \Lambda_2 \big| \abs{F} - M \big|, \qquad \qquad F \subset \T^n.
    \end{equation*}
    Once again up to translation, we may assume that $\tilde F$ is a minimizer of
    \begin{equation*}
        \tilde J^\phi(F) = J^\phi(F) + \Lambda_1 \abs{F \Delta L} + \Lambda_2 \big| \abs{F} - M \big|, \qquad \qquad F \subset \T^n.
    \end{equation*}

    \vspace{5 pt}
    \noindent
    \textit{Step one}:
    we prove that choosing $\Lambda_1$ sufficiently large one has $\tilde F = L$.
    Indeed, by means of \cref{lem:2.nonlocont} and \cref{lem:4.boundper} we have
    \begin{align*}
        0 &\ge \tilde J^\phi(\tilde F) - \tilde J^\phi(L) = J^\phi(\tilde F) + \Lambda_1 \abs{\tilde F \Delta L} 
        + \Lambda_2 \abs{|\tilde F| - M} - J^\phi(L) \\
        &= P_{\T^n}^\phi(\tilde F) - P_{\T^n}^\phi(L) + \gamma \rb{\F(\tilde F) - \F(L)} 
        + \Lambda_1 \abs{\tilde F \Delta L} + \Lambda_2 \abs{|\tilde F| - M} \\
        &\ge - C_4 \abs{\tilde F \Delta L} - \gamma C_3 \abs{\tilde F \Delta L}
        + \Lambda_1 \abs{\tilde F \Delta L} 
        = \rb{\Lambda_1 - \gamma C_3 - C_4} \abs{\tilde F \Delta L},
    \end{align*}
    so that by choosing $\Lambda_1 > \gamma C_3 + C_4$ we conclude that $\tilde F = L = (\ell_1, \ell_2)$.

    \vspace{5 pt}
    \noindent
    \textit{Step two}:
    arguing in a similar way, we show that for some $\Lambda_2$ sufficiently large 
    it holds $\abs{F_h} = M$ for any $h \in \N$.
    To see that, assume by contradiction that $\abs{F_h} \not = M$,
    and set, for $\abs{\varepsilon}$ sufficiently small,
    $L^\varepsilon \defeq (\ell_1 - \varepsilon, \ell_2 + \varepsilon)$.
    Then define, for $h \in \N$ sufficiently large,
    \begin{equation*}
        F_h' \defeq 
        \begin{cases}
            F_h \cup L^{\varepsilon_h} \qquad &\text{ if } \abs{F_h} < M, \\
            F_h \cap L^{\varepsilon_h} \qquad &\text{ if } \abs{F_h} > M,
        \end{cases}
    \end{equation*}
    where $\varepsilon_h \in \R$ is chosen so that $\abs{F_h'} = M$.
    This is possible by continuity of the Lebesgue measure, since $F_h \to L$.
    Note that we have either $F_h \subset F_h'$ or $F_h' \subset F_h$,
    therefore $\abs{F_h' \Delta F_h} = \abs{\abs{F_h} - M}$.
    Moreover, $\varepsilon_h \to 0$ as $h \to \infty$, so that by \cref{lem:4.boundper} one has
    \begin{equation*}
        P_{\T^n}^\phi (F_h) - P_{\T^n}^\phi(F_h') \ge - 2 C_4(\phi, M) \big| F_h' \Delta F_h \big|
    \end{equation*}
    for $h \in \N$ sufficiently large.
    Again by \cref{lem:2.nonlocont} we infer that
    \begin{align*}
        0 &\ge J_h^\phi(F_h) - J_h^\phi(F_h') 
        = J^\phi(F_h) - J^\phi(F_h') + \Lambda_1 \big( d_{L^1}(F_h, E_h) - d_{L^1}(F_h', E_h) \big) + \Lambda_2 \Big| \abs{F_h} - M \Big| \\
        &\ge P_{\T^n}^\phi (F_h) - P_{\T^n}^\phi(F_h') + \gamma \Big( \F(F_h) - \F(F_h') \Big)
        - \Lambda_1 d_{L^1}(F_h, F_h') + \Lambda_2 \Big| \abs{F_h} - M \Big| \\
        &\ge (- 2 C_4 - \gamma C_3 - \Lambda_1) \big| F_h' \Delta F_h \big| + \Lambda_2 \Big| \abs{F_h} - M \Big|
        = (\Lambda_2 - \gamma C_3 - 2 C_4 - \Lambda_1) \Big| \abs{F_h} - M \Big|.
    \end{align*}
    Choosing $\Lambda_2 > \gamma C_3 + 2 C_4 + \Lambda_1$ we get a contradiction with the minimality of $F_h$.
   
    \vspace{5 pt}
    \noindent
    \textit{Step three}:
    we claim that the periodic extension $(F_h)_{\R^n}$ of $F_h$ to $\R^n$ is a $(\phi, \Lambda, 1/2)$-perimeter minimizer for any $h \in \N$,
    where $\Lambda = \gamma C_3 + \Lambda_1 + \Lambda_2$.
    Indeed, let $x \in \R^n$, $0 < r < 1/2$, and $F \subset \R^n$ be a set of locally finite perimeter with 
    $(F_h)_{\R^n} \Delta F \subset \subset B(x, r)$.
    Up to a translation of $(F_h)_{\R^n}$  and $F$ we may assume that $B(x, r) \subset (0, 1)^n$, then
    \begin{multline*}
        P^\phi \big( (F_h)_{\R^n}; B(x, r) \big) - P^\phi \big( F; B(x, r) \big)
        = P_{\T^n}^\phi(F_h) - P_{\T^n}^\phi(F) \\
        \begin{aligned}
            &= J_h^\phi(F_h) - J_h^\phi(F) - \gamma \Big( \F(F_h) - \F(F) \Big)
            - \Lambda_1 \big( d_{L^1}(F_h, E_h) - d_{L^1}(F, E_h) \big) + \Lambda_2 \Big| \abs{F} - M \Big| \\
            &\le \gamma C_3 \big| F_h \Delta F \big| 
            + \Lambda_1 \big| F_h \Delta F \big| + \Lambda_2 \big| F_h \Delta F \big|
            = (\gamma C_3 + \Lambda_1 + \Lambda_2) \big| F_h \Delta F \big|.
        \end{aligned}
    \end{multline*}
    We may apply \cref{thm:5.almostreg} to conclude that, for $h \in \N$ large enough, 
    $F_h = (g_{1, h}, g_{2, h}) \in \St_M$ for some functions $g_{1, h}, g_{2, h} \in C^1(\T^{n - 1})$ with
    \begin{equation*}
        \lim_{h \to \infty} \norm{g_{i, h} - \ell_i}_\infty = 0, \qquad
        \lim_{h \to \infty} \norm{\nabla g_{i, h}}_\infty = 0, \qquad \qquad i = 1, 2,
    \end{equation*}
    where $L = (\ell_1, \ell_2)$, so that $d_{C^1}(F_h, L) \to 0$ as $h \to \infty$.
    We may also assume that, up to a subsequence and a translation, for any $h \in \N$ the set $F_h$ is a strip
    and $\int_{\T^{n - 1}} g_{i, h} = \ell_i$ for $i = 1, 2$.

    \vspace{5 pt}
    \noindent
    \textit{Step four}:
    we claim that
    \begin{equation*}
        \lim_{h \to \infty} \dfrac{d_{L^1}(F_h,L)}{d_{L^1}(E_h, L)} = 1.
    \end{equation*}
    Indeed, by minimality of $F_h$ for $J_h^\phi$ and by \eqref{eq:5.C1min}, we have
    \begin{multline*}
        J^\phi (F_h) + \Lambda_1 d_{L^1}(F_h, E_h) = J_h^\phi(F_h) \le J_h^\phi(E_h) = J^\phi(E_h) \\
        < J^\phi(L) + \dfrac{C_L^\phi}{16 (n - 1)} \big( d_{L^1}(E_h, L) \big)^2 
        \le J^\phi(F_h) + \dfrac{C_L^\phi}{16 (n - 1)} \big( d_{L^1}(E_h, L) \big)^2.
    \end{multline*}
    As a consequence,
    \begin{equation*}
        \lim_{h \to \infty} \abs{\dfrac{d_{L^1}(E_h, L) - d_{L^1}(F_h, L)}{d_{L^1}(E_h, L)}} 
        \le \lim_{h \to \infty} \dfrac{d_{L^1}(F_h, E_h)}{d_{L^1}(E_h, L)} 
        \le \dfrac{C_L^\phi}{16 \Lambda_1 (n - 1)} \lim_{h \to \infty} d_{L^1}(E_h, L) = 0,
    \end{equation*}
    proving our claim.
    Recalling that $F_h = (g_{1, h}, g_{2, h})$, we conclude by noting that
    \begin{align*}
        J^\phi(F_h) &\le J^\phi(E_h) < J^\phi(L) + \frac{C_L^\phi}{16 (n - 1)} \big( d_{L^1}(E_h, L) \big)^2 \\
        &\le J^\phi(L) + \frac{C_L^\phi}{8 (n - 1)} \big( d_{L^1}(F_h, L) \big)^2
        \le J^\phi(L) + \frac{C_L^\phi}{8 (n - 1)} \rb{\sum_{i = 1}^2 \int_{\T^{n - 1}} \abs{g_{i, h} - \ell_i}}^2 \\
        &\le J^\phi(L) + \frac{C_L^\phi}{4 (n - 1)} \rb{\sum_{i = 1}^2 \int_{\T^{n - 1}} \abs{g_{i, h} - \ell_i}^2}
        \le J^\phi(L) + \frac{C_L^\phi}{4} \rb{\sum_{i = 1}^2 \int_{\T^{n - 1}} \abs{\nabla g_{i, h}}^2}
    \end{align*}
    for $h \in \N$ sufficiently large.
    Thus we reached a contradiction with \eqref{eq:5.C1min}.
    \qedhere

\end{proof}

One can easily prove the following by means 
of \cref{prop:3.lamstab}, \cref{thm:2.strictminL1} and \cref{rem:3.weakuniell},
which in turn implies \cref{cor:2.lamminL1}.

\begin{cor} \label{cor:5.lamminL1}

    Let $L$ be the lamella of volume $M \in (0, 1)$, and let $\phi : \R^n \to [0, + \infty)$ 
    be a uniformly elliptic surface tension satisfying \eqref{eq:3.weakuniell} for some $\lambda > 0$.
    Then, setting 
    \begin{equation*}
        \gamma_0 \defeq \dfrac{\lambda}{4 (M - M^2) (n - 1)},
    \end{equation*}
    $L$ is an \textit{isolated $L^1$-local minimizer} for $J_\gamma^\phi$ for any $\gamma \in [0, \gamma_0)$;
    more precisely, fixing $\gamma \in [0, \gamma_0)$, there exists $\delta > 0$ such that
    \begin{equation*}
        J_\gamma^\phi(E) \ge J_\gamma^\phi(L) + \frac{C_L^\phi}{16 (n - 1)} \big( d_{L^1}(E, L) \big)^2
    \end{equation*}
    for any set $E \subset \T^n$ with $\abs{E} = M$ and $d_{L^1}(E, L) < \delta$, where
    \begin{equation*}
         C_L^\phi \defeq \inf \Big\{ \partial^2 J^\phi(L) [\psi_1, \psi_2] : 
        \psi_1, \psi_2 \in V, \ \norm{(\psi_1, \psi_2)}_{V \times V} = 1 \Big\} > 0.
    \end{equation*}
    
\end{cor}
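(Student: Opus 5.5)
The plan is to combine \cref{prop:3.lamstab} (in the weakened form of \cref{rem:3.weakuniell}) with \cref{thm:2.strictminL1}.

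\emph{Step 1: strict stability.} Fix $\gamma \in [0, \gamma_0)$ with $\gamma_0 = \lambda / (4(M - M^2)(n-1))$. The computation in the proof of \cref{prop:3.lamstab} drops the Green's term, which is nonnegative by \cref{rem:3.greenterm}. It uses only the inequality $\nabla_{x'}^2 \phi(\pm e_n)[v', v'] \ge \lambda \abs{v'}^2$, which is exactly \eqref{eq:3.weakuniell}, together with the Poincaré–Wirtinger inequality \eqref{eq:2.poiwirt}. This yields
\begin{equation*}
    \partial^2 J_\gamma^\phi(L)[\psi_1, \psi_2] \ge \big(\lambda - 4\gamma(M - M^2)(n-1)\big) \norm{(\psi_1, \psi_2)}_{V \times V}^2 .
\end{equation*}
The constant on the right is strictly positive because $\gamma < \gamma_0$. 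Hence the strict stability condition \eqref{eq:2.strictstab} holds for $J_\gamma^\phi$. Moreover we get the explicit lower bound $C_L^\phi \ge \lambda - 4\gamma(M-M^2)(n-1) > 0$, which shows that the constant in the statement is positive.

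\emph{Step 2: apply the $L^1$ result.} Since $\phi$ is uniformly elliptic, the hypotheses of \cref{thm:2.strictminL1} are satisfied. That theorem provides $\delta > 0$ such that
\begin{equation*}
    J_\gamma^\phi(E) \ge J_\gamma^\phi(L) + \frac{C_L^\phi}{16(n-1)} \big(d_{L^1}(E, L)\big)^2
\end{equation*}
for every $E$ with $\abs{E} = M$ and $d_{L^1}(E, L) < \delta$. Because $C_L^\phi > 0$, this inequality is strict whenever $0 < d_{L^1}(E, L)$. So $L$ is an isolated $L^1$-local minimizer, as claimed.

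\emph{Expected obstacle.} The only delicate point is consistency of the constants. Uniform ellipticity with some constant $\lambda'$ already implies \eqref{eq:3.weakuniell} with $\lambda'$, because $e_n \perp (v', 0)$. The corollary, however, allows the possibly better constant $\lambda$ from \eqref{eq:3.weakuniell}. One must therefore check that \cref{thm:2.strictminL1} uses uniform ellipticity only qualitatively, namely for the regularity of \cref{thm:5.almostreg} and for the coercivity of \cref{prop:4.coerc}. It never uses the value of the ellipticity constant to control $\gamma$. With that checked, $\gamma_0$ may be defined through the $\lambda$ of \eqref{eq:3.weakuniell}.
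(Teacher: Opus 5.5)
Your proposal is correct and follows essentially the paper's own route. You obtain strict stability for $\gamma<\gamma_0$ from the computation in \cref{prop:3.lamstab} under the weaker hypothesis \eqref{eq:3.weakuniell} (as in \cref{rem:3.weakuniell}), and then apply \cref{thm:2.strictminL1} directly. Your check that uniform ellipticity enters \cref{thm:2.strictminL1} only qualitatively (regularity of almost minimizers and coercivity), so that $\gamma_0$ may be defined through the possibly larger constant in \eqref{eq:3.weakuniell}, is exactly the point that makes this combination legitimate.
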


\section{The horizontally flat case} \label{sec:6}

The results obtained in the previous section required some strong regularity assumptions 
on the surface tension $\phi$ for the second variation formula to make sense.
Here we will consider the case of a horizontally flat surface tension $\phi$,
as to prove \cref{thm:2.lamminL1hor}.

From a geometrical point of view, the proof is obtained by inner approximation of the Wulff shape for a horizontally flat $\phi$
through strictly convex sets of class $C^2$ tangent to $W_\phi$ at its horizontal faces. 
By the assumption on the singularity of $\phi$ at $\pm e_n$, we may ensure that the surface tension associated with an approximating set 
satisfies \eqref{eq:3.weakuniell} for arbitrarily large values of $\lambda$.

\begin{proof}[Proof of \cref{thm:2.lamminL1hor}]

    We divide the proof into four steps. 
    In Step one we prove the result for a surface tension $\phi$
    whose Wulff shape is a right circular cylinder symmetric with respect to the origin.
    This is the heart of the proof, and it is achieved by an approximation argument employed in \cite{Bon13}.
    In Step two we remove the assumption that the cylinder is right circular, assuming it to be oblique instead.
    In Step three we prove the assertion whenever $\partial W_\phi$ contains the horizontal faces 
    of an oblique cylinder symmetric with respect to the origin.
    Finally, in Step four we no longer require symmetry.
    
    \vspace{5 pt}
    \noindent
    \textit{Step one}:
    consider the open right circular cylinder $C \subset \R^n$ centered at the origin, given by
    \begin{equation} \label{eq:5.cyl}
        C = \Big\{ (x', x_n) \in \R^n : \abs{x'} < a, \ \abs{x_n} < b \Big\},
    \end{equation}
    where $a > 0$ is the radius, and $b > 0$ is half the height (see \cref{fig:6.horiflat1})
    By means of \cref{prop:2.surfchar} its surface tension is easily seen to be
    \begin{equation*}
        \phi(x', x_n) = a \abs{x'} + b \abs{x_n}, \qquad \qquad (x', x_n) \in \R^n.
    \end{equation*}
    For any $\varepsilon > 0$ sufficiently small we look for a uniformly elliptic surface tension $\phi^\varepsilon$ 
    satisfying the following properties:
    \begin{enumerate}
        \item 
            $\phi^\varepsilon (\pm e_n) = \phi (\pm e_n)$;

        \item 
            $\phi^\varepsilon \le \phi$;

        \item 
            $\phi^\varepsilon$ is uniformly elliptic and 
            $\nabla_{x'}^2 \phi^\varepsilon (\pm e_n) \sb{v', v'} \ge \frac{\lambda}{\varepsilon} \abs{v'}^2$ for any $v' \in \R^{n - 1}$
            for some constant $\lambda > 0$ not depending on $\varepsilon$.
            
    \end{enumerate}
    In that case, by (i) it follows that $P_{\T^n}^{\phi^\varepsilon}(L) = P_{\T^n}^\phi(L)$,
    whereas by (ii) it must be $P_{\T^n}^{\phi^\varepsilon}(E) \le P_{\T^n}^\phi(E)$
    for any set of finite perimeter $E \subset \T^n$.
    Finally, (iii) and \cref{cor:5.lamminL1} ensure that $L$ is an isolated $L^1$-local minimizer of $J^{\phi^\varepsilon}_\gamma$ 
    for any $\gamma < \gamma_0$, where
    \begin{equation*}
        \gamma_0 = \frac{\lambda}{4 \varepsilon (M - M^2) (n - 1)}.
    \end{equation*}
    This means that, for any $\gamma < \gamma_0$, there exists $\delta > 0$ such that
    \begin{equation*} 
        J_\gamma^{\phi^\varepsilon}(L) < J_\gamma^{\phi^\varepsilon}(E)
    \end{equation*}
    for any $E \subset \T^n$ with $\abs{E} = M$ and $0 < d_{L^1}(E, L) < \delta$, so that
    \begin{align*}
        J_\gamma^\phi(L) = J_\gamma^{\phi^\varepsilon}(L) 
        < J_\gamma^{\phi^\varepsilon}(E) \le J_\gamma^\phi(E).
    \end{align*}
    By arbitrariness of $\varepsilon$ and since $\gamma_0 \to + \infty$ as $\varepsilon \to 0^+$ 
    we may conclude that $L$ is an isolated $L^1$-local minimizer of $J_\gamma^\phi$ for any $\gamma \ge 0$.
    The surface tension $\phi^\varepsilon \in C^2(\R^n \setminus \cb{0})$ given by
    \begin{equation} \label{eq:5.cylapprox}
        \phi^\varepsilon (x', x_n) 
        \defeq \frac{a - b \varepsilon}{1 - \varepsilon^2} \sqrt{\abs{x'}^2 + \varepsilon^2 \abs{x_n}^2} 
        + \frac{b - a \varepsilon}{1 - \varepsilon^2} \sqrt{\varepsilon^2 \abs{x'}^2 + \abs{x_n}^2}, 
        \qquad \qquad (x', x_n) \in \R^n,
    \end{equation}
    where $\varepsilon > 0$ is a fixed constant smaller than $1$, $a/b$ and $b/a$, satisfies all three properties:
    clearly (i) holds since $\phi^\varepsilon (\pm e_n) = b = \phi (\pm e_n)$, moreover
    \begin{equation*}
        \phi^\varepsilon (x', x_n)
        \le \frac{a - b \varepsilon}{1 - \varepsilon^2} \big( \abs{x'} + \varepsilon \abs{x_n} \big)
        + \frac{b - a \varepsilon}{1 - \varepsilon^2} \big( \varepsilon \abs{x'} + \abs{x_n} \big)
        \le a \abs{x'} + b \abs{x_n} = \phi(x', x_n),
    \end{equation*}
    for any $(x', x_n) \in \R^n$, so that we also have (ii).
    Finally, a straightforward computation shows that, 
    for $x, v \in \S^{n - 1}$ with $x \perp v$,
    \begin{multline*}
        \nabla^2 \phi^\varepsilon (x) \sb{v, v} 
        = \dfrac{a - b \varepsilon}{\rb{\sqrt{\abs{x'}^2 + \varepsilon^2 x_n^2}}^3} 
        \rb{\dfrac{\varepsilon^2}{1 - \varepsilon^2}
        + \rb{\abs{x'}^2 \abs{v'}^2 - (x' \cdot v')^2}} \\
        + \dfrac{b - a \varepsilon}{\rb{\sqrt{\varepsilon^2 \abs{x'}^2 + x_n^2}}^3} 
        \rb{\dfrac{\varepsilon^2}{1 - \varepsilon^2}
        - \varepsilon^2 \rb{\abs{x'}^2 \abs{v'}^2 - (x' \cdot v')^2}}. 
    \end{multline*}
    This implies that $\phi^\varepsilon$ is uniformly elliptic with
    \begin{equation} \label{eq:6.uniellconst}
        \nabla^2 \phi^\varepsilon (x) \sb{v, v} 
        \ge \dfrac{(a + b) \varepsilon^2}{(1 + \varepsilon)}
        \qquad \qquad x, v \in \S^{n - 1}, \ x \perp v,
    \end{equation}
    and moreover
    \begin{equation*}
        \nabla_{x'}^2 \phi^\varepsilon (\pm e_n) \sb{v', v'} 
        = \rb{\dfrac{a (1 + \varepsilon^2)}{\varepsilon} - b} \abs{v'}^2 \ge \dfrac{a - b \varepsilon}{\varepsilon} \abs{v'}^2,
        \qquad \qquad v' \in \R^{n - 1},
    \end{equation*}
    yielding (iii) as well as the conclusion of the first step of the proof.

    \begin{figure}
    \centering
    \begin{tikzpicture}[scale = 0.8]
        \draw[dashed, fill = gray!50] (- 0.5, - 1.5) -- (0.5, - 1.5) -- (0.5, 1.5) -- (- 0.5, 1.5) -- cycle;
        \draw[thick, -latex](0, - 2.5) -- (0, 2.5) node[left]{$x_2$};
        \draw[thick, -latex](- 3, 0) -- (3, 0) node[above]{$x_1$};
        \draw[fill = black] (0.5, 0) circle (0.05) node[anchor = north west]{$a$};
        \draw[fill = black] (- 0.5, 0) circle (0.05) node[anchor = north east]{$- a$};
        \draw[fill = black] (0, 1.5) circle (0.05) node[anchor = south east]{$b$};
        \draw[fill = black] (0, - 1.5) circle (0.05) node[anchor = north east]{$- b$};
        \draw(1, 1.5) node{$C$};
    \end{tikzpicture}
    \hspace{1 cm}
    \begin{tikzpicture} [scale = 0.8]
        \draw[dashed, fill = gray!50] (- 1.5, - 1.5) -- (- 0.5, - 1.5) -- (1.5, 1.5) -- (0.5, 1.5) -- cycle;
        \draw[thick, -latex](0, - 2.5) -- (0, 2.5) node[left]{$x_2$};
        \draw[thick, -latex](- 3, 0) -- (3, 0) node[above]{$x_1$};
        \draw[fill = black] (0.5, 0) circle (0.05) node[anchor = north west]{$a$};
        \draw[fill = black] (- 0.5, 0) circle (0.05) node[anchor = south east]{$- a$};
        \draw[fill = black] (0, 1.5) circle (0.05) node[anchor = south east]{$b$};
        \draw[fill = black] (0, - 1.5) circle (0.05) node[anchor = north east]{$- b$};
        \draw[thick, ->] (0, 1.5) -- (1, 1.5) node[above]{\scriptsize{$(\tau', 0)$}};
        \draw(1.6, 0.7) node{$C'$};
    \end{tikzpicture}
    \caption{The Wulff shapes for Step one and Step two of the proof of \cref{thm:2.lamminL1hor}.
    If the projections of the two horizontal faces on the hyperplane $\cb{x_n = 0}$ had nonempty intersection 
    then one could skip Step two altogether and get the same conclusion by Step three and Step four.}
    \label{fig:6.horiflat1}
\end{figure}
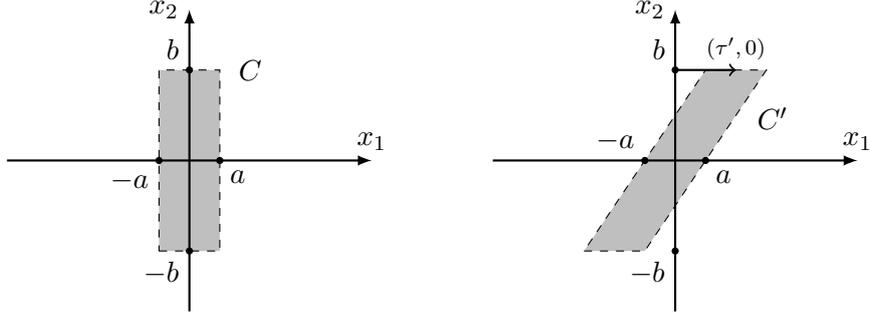

    \vspace{5 pt}
    \noindent
    \textit{Step two}:
    consider the surface tension $\phi$ associated with an open oblique circular cylinder 
    $C' \subset \R^n$ with radius $a$, and height $2 b$ symmetric with respect to the origin (see \cref{fig:6.horiflat1}).
    Denoting by $(\tau', 0)$ the vector joining $(0, b)$ with the center of the upper face of $C'$,
    we consider the linear map associated with the matrix
    \begin{equation*}
        A = 
        \begin{pmatrix}
            1 & 0 & \cdots & 0 & \frac{\tau_1}{b} \\
            0 & 1 & \cdots & 0 & \frac{\tau_2}{b} \\
            \vdots & \ddots & \ddots & \vdots & \vdots \\
            0 & 0 & \cdots & 1 & \frac{\tau_{n - 1}}{b} \\
            0 & 0 & \cdots & 0 & 1 \\
        \end{pmatrix}
        = \Id + \dfrac{(\tau', 0)}{b} \otimes e_n.
    \end{equation*}
    Notice that this gives a linear bijection between $C$ and $C'$, 
    where $C$ is defined as in \eqref{eq:5.cyl}, whose surface tension is now denoted by $\phi_C$.
    Again by \cref{prop:2.surfchar} we know that the relation between $\phi$ and $\phi_C$ is
    \begin{equation*}
        \phi(x) = \phi_C (A^* x) = a \abs{x'} + \abs{x' \cdot \tau' + b x_n}, 
        \qquad \qquad x = (x', x_n) \in \R^n.
    \end{equation*}
    The surface tension $\phi^\varepsilon \in C^2(\R^n \setminus \cb{0})$ defined by
    \begin{equation*}
        \phi^\varepsilon (x) \defeq \phi_C^\varepsilon (A^* x), \qquad \qquad x \in \R^n,
    \end{equation*}
    where $\phi_C^\varepsilon$ denotes the surface tension defined in \eqref{eq:5.cylapprox},
    once again satisfies (i), (ii) and (iii), allowing to conclude:
    clearly $\phi^\varepsilon(\pm e_n) = b = \phi (\pm e_n)$ and
    \begin{equation*}
        \phi^\varepsilon(x) = \phi_C^\varepsilon(A^* x) \le \phi_C(A^* x) = \phi(x),
    \end{equation*}
    so that only (iii) is left to prove.
    Notice that $\phi^\varepsilon$ is uniformly elliptic:
    by means of \eqref{eq:6.uniellconst} 
    it suffices to prove that there exists $\mu > 0$ such that, 
    for any $x, v \in \S^{n - 1}$ with $x \perp v$, it holds 
    \begin{equation} \label{eq:6.uniellcyl}
        \begin{split}
            \nabla^2 \phi^\varepsilon (x) \sb{v, v} &= \nabla^2 \phi_C^\varepsilon (A^*x) \sb{A^* v, A^* v} \\
            &\ge \dfrac{(a + b) \varepsilon^2}{(1 + \varepsilon) \abs{A^* x}^3} 
            \rb{\abs{A^* v}^2 \abs{A^* x}^2 - (A^* x \cdot A^* v)^2} \ge \mu.
        \end{split}
    \end{equation} 
    Indeed, if by contradiction we had
    \begin{equation*}
        \inf_{\substack{x, v \in \S^{n - 1} \\ x \perp v}} 
        \rb{\abs{A^* v}^2 \abs{A^* x}^2 - (A^* x \cdot A^* v)^2} = 0,
    \end{equation*}
    by compactness of $\S^{n - 1}$ there would exist $\bar x, \bar v \in \S^{n - 1}$ with $\bar x \perp \bar v$
    such that
    \begin{equation*}
        \abs{A^* v}^2 \abs{A^* x}^2 = (A^* x \cdot A^* v)^2,
    \end{equation*}
    which in turn implies that $A^* x$ and $A^* v$ are linearly dependent.
    Since $A$ is invertible the same is true for $x$ and $v$, against the assumption of orthogonality,
    proving the validity of \eqref{eq:6.uniellcyl}.
    Since $\nabla^2 \phi_C^\varepsilon(x) x = 0$ for any $x \ne 0$, by the very same formula we note that
    \begin{align*}
        \nabla_{x'}^2 \phi^\varepsilon (\pm e_n) \sb{v', v'} 
        &= \nabla^2 \phi^\varepsilon (\pm e_n) \sb{(v', 0), (v', 0)}
        = \nabla^2 \phi_C^\varepsilon (\pm A^* e_n) \sb{A^*(v', 0), A^*(v', 0)} \\
        &= \nabla^2 \phi_C^\varepsilon (\pm e_n) 
        \sb{\rb{v', \frac{\tau' \cdot v'}{b}}, \rb{v', \frac{\tau' \cdot v'}{b}}} \\
        &= \nabla_{x'}^2 \phi_C^\varepsilon (\pm e_n) \sb{v', v'}
        \ge \dfrac{a - b \varepsilon}{\varepsilon} \abs{v'}^2
    \end{align*}
    for any $v \in \R^{n - 1}$, concluding the proof of Step two.
    
    \vspace{5 pt}
    \noindent
    \textit{Step three}:
    let $\phi$ be a surface tension and assume there exist $a, b > 0$, $\tau' \in \R^{n - 1}$
    with $C'$ defined as in the previous step such that $C' \subset W_\phi$ and
    $\cb{x \in \reb C' : \nu_{C'}(x) = \pm e_n} \subset \reb W_\phi$ (see \cref{fig:6.horiflat2}).
    Therefore, denoting by $\phi_{C'}$ the surface tension whose Wulff shape is $C'$, one gets
    \begin{equation*}
        \phi_{C'}(x) = \sup \Big\{ x \cdot y : y \in C' \Big\} 
        \le \sup \Big\{ x \cdot y : y \in W_\phi \Big\} = \phi(x), \qquad \qquad x \in \R^n,
    \end{equation*}
    as well as
    \begin{equation*}
        \phi(\pm e_n) = \sup \Big\{ \pm y_n : (y', y_n) \in W_\phi \Big\}
        = \sup \Big\{ \pm y_n : (y', y_n) \in C' \Big\} = \phi_{C'}(\pm e_n).
    \end{equation*}
    Hence $P_{\T^n}^{\phi_{C'}} (L) = P_{\T^n}^\phi (L)$, 
    and also $P_{\T^n}^{\phi_{C'}} (E) \le P_{\T^n}^\phi (E)$ for any $E \subset \T^n$.
    By Step two we conclude.

    \begin{figure}
    \centering
    \begin{tikzpicture} [scale = 0.8]
        \draw[dashed, fill = gray!50] (- 1.5, - 1.5) -- (- 0.5, - 1.5) -- (0.7, - 1) -- (1.7, 0) -- (1.5, 1.5) -- (0.5, 1.5) 
        -- (- 0.75, 1.25) -- (- 2, 0) to[out = - 120, in = 160]  cycle;
        \draw[densely dotted] (- 1.5, - 1.5) -- (- 0.5, - 1.5) -- (1.5, 1.5) -- (0.5, 1.5) -- cycle;
        \draw[thick, -latex](0, - 2.5) -- (0, 2.5) node[left]{$x_2$};
        \draw[thick, -latex](- 3, 0) -- (3, 0) node[above]{$x_1$};
        \draw[fill = black] (0.5, 0) circle (0.05) node[anchor = north west]{$a$};
        \draw[fill = black] (- 0.5, 0) circle (0.05) node[anchor = south east]{$- a$};
        \draw[fill = black] (0, 1.5) circle (0.05) node[anchor = south east]{$b$};
        \draw[fill = black] (0, - 1.5) circle (0.05) node[anchor = north east]{$- b$};
        \draw(1.6, - 0.7) node{$W_\phi$};
    \end{tikzpicture}
    \hspace{1 cm}
    \begin{tikzpicture} [scale = 0.8]
        \draw[dashed, fill = gray!50] (- 1.5, - 1.5) -- (- 0.5, - 1.5) -- (0.7, - 1) -- (1.7, 0) -- (1.5, 1.5) -- (0.5, 1.5) 
        -- (- 0.75, 1.25) -- (- 2, 0) to[out = - 120, in = 160]  cycle;
        \draw[densely dotted] (- 1.5, - 1.5) -- (- 0.5, - 1.5) -- (1.5, 1.5) -- (0.5, 1.5) -- cycle;
        \draw[thick, -latex](- 0.5, - 2) -- (- 0.5, 3) node[left]{$x_2$};
        \draw[thick, -latex](- 3.5, 0.5) -- (2.5, 0.5) node[above]{$x_1$};
        \draw[thick, ->] (- 0.5, 0.5) -- (0, 0) node[above]{$z$};
        \draw(1.6, - 0.7) node{$W_\phi$};
    \end{tikzpicture}
    \caption{The Wulff shapes for Step three and Step four of the proof of \cref{thm:2.lamminL1hor}.}
    \label{fig:6.horiflat2}
\end{figure}
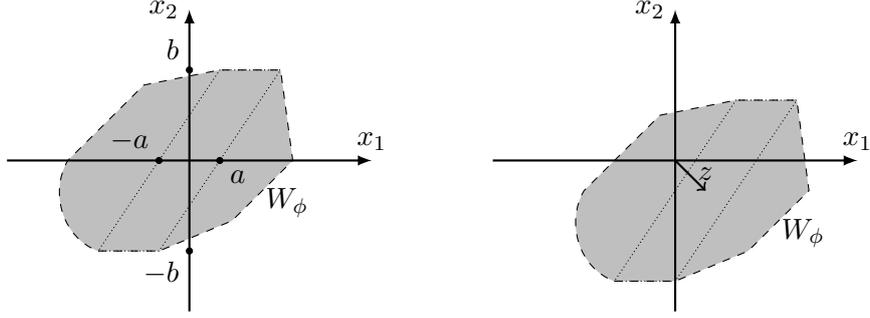
    
    \vspace{5 pt}
    \noindent
    \textit{Step four}:
    let $\phi$ be a horizontally flat surface tension. 
    By definition $W_\phi$ has upper and lower horizontal faces, and up to a translation of $W_\phi$
    we may find an oblique cylinder $C'$ satisfying the geometric condition of Step three (see \cref{fig:6.horiflat2}).
    Considering the vector $z \in \R^n$ joining $0 \in \R^n$ with the center of $C'$, 
    we see that $z$ belongs to the Wulff shape $W_\phi$ of $\phi$.
    In particular $0 \in W_\phi - z$, and \cref{prop:2.surfchar} shows that 
    there exists a surface tension $\hat \phi$ whose Wulff shape is $W_\phi - z$, 
    defined by
    \begin{align*}
        \hat \phi(x) &= \sup \Big\{ x \cdot y : y \in W_\phi - z \Big\} 
        = \sup \Big\{ x \cdot (y - z) : y \in W_\phi \Big\} \\
        &= \sup \Big\{ x \cdot y : y \in W_\phi \Big\} - x \cdot z = \phi(x) - x \cdot z,
        \qquad \qquad x \in \R^n.
    \end{align*}
    Given $E \subset \T^n$, the Generalized Divergence Theorem yields
    \begin{equation*}
        P_{\T^n}^\phi (E) 
        = P_{\T^n}^{\hat \phi}(E) + \int_{\reb E} \nu_E(x) \cdot z \de \Ha^{n - 1}
        = P_{\T^n}^{\hat \phi}(E),
    \end{equation*}
    and we conclude the proof, on noting that $\hat \phi$ satisfies the assumptions of Step three. \qedhere
    
\end{proof}

\section{Global minimality of the lamella} \label{sec:7}

    In this final section we aim to to prove some results about global minimality of the lamella, as to prove \cref{thm:2.globmin2}.
    We follow the arguments of \cite[Section 5]{AFM13}, first proving that if the surface tension is uniformly elliptic (or also horizontally flat)
    and the lamella is the unique minimizer of the periodic anisotropic isoperimetric problem \eqref{eq:2.wulffper}, 
    then it is also a minimizer for problem \eqref{eq:1.anOKprob} for sufficiently small values of $\gamma$,
    which is the content of \cref{prop:7.stillmin}.
    Then, we show in \cref{thm:7.wulffminell} that for $n = 2$, under the assumption \eqref{eq:2.minilam} on the uniformly elliptic anisotropy $\phi$,
    the horizontal lamella is the unique minimizer for \eqref{eq:2.wulffper}
    for $M$ sufficiently close to $1/2$ with explicit minimality range, yielding \cref{thm:2.globmin2}. 

    \begin{prop} \label{prop:7.stillmin}

        Let $L$ be the lamella of volume $M \in (0, 1)$, and let the surface tension $\phi$ be either uniformly elliptic or horizontally flat.
        If $L$ is the unique (up to translations) global minimizer of the periodic Wulff problem \eqref{eq:2.wulffper}, 
        then it is also the unique global minimizer for $J_\gamma^\phi$ as long as $\gamma > 0$ is sufficiently small.
        
    \end{prop}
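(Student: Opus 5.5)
Argue by contradiction, following \cite[Theorem 5.1]{AFM13}, with the local minimality results of \cref{cor:5.lamminL1} and \cref{thm:2.lamminL1hor} as the key input. Suppose there are $\gamma_h \to 0^+$ and sets $E_h \subset \T^n$ with $\abs{E_h} = M$, where each $E_h$ is a global minimizer of $J_{\gamma_h}^\phi$ (these exist by \cref{rem:2.minexst}). Assume moreover that $E_h$ is not a translate of $L$, so that $J_{\gamma_h}^\phi(E_h) \le J_{\gamma_h}^\phi(L)$ and $d_{L^1}(E_h, L) > 0$.

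First, compactness. Since $\F \ge 0$ and $\F \le C$ uniformly (by \eqref{eq:2.solestimates1}), we have $P_{\T^n}^\phi(E_h) \le P_{\T^n}^\phi(L) + \gamma_h \F(L)$, so the perimeters are bounded. Up to subsequences and translations, $E_h \to E$ in $L^1$ with $\abs{E} = M$. Lower semicontinuity of $P^\phi_{\T^n}$ and $\gamma_h \to 0$ then give
\begin{equation*}
P_{\T^n}^\phi(E) \le \liminf_h P_{\T^n}^\phi(E_h) \le P_{\T^n}^\phi(L).
\end{equation*}
So $E$ minimizes the periodic Wulff problem \eqref{eq:2.wulffper}, and by the uniqueness assumption $E = L$ up to translation. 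Hence $d_{L^1}(E_h, L) \to 0$.

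Second, use local minimality at a fixed parameter. Fix some $\bar\gamma > 0$ for which $L$ is an isolated $L^1$-local minimizer of $J_{\bar\gamma}^\phi$. In the uniformly elliptic case take $\bar\gamma < \gamma_0$ from \cref{cor:5.lamminL1}; in the horizontally flat case any $\bar\gamma$ works by \cref{thm:2.lamminL1hor}. Let $\delta$ be the corresponding radius. The main point is to transfer minimality from $\gamma_h$ to $\bar\gamma$. Since $\gamma_h < \bar\gamma$ eventually, write
\begin{equation*}
J_{\bar\gamma}^\phi(E_h) = J_{\gamma_h}^\phi(E_h) + (\bar\gamma - \gamma_h)\F(E_h).
\end{equation*}
This comparison needs $\F(E_h) \le \F(L)$. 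That follows from the sandwich: $J_{\gamma_h}^\phi(E_h) \le J_{\gamma_h}^\phi(L)$ and $P^\phi(E_h) \ge P^\phi(L)$ would force it, but we do not know a priori that $P^\phi(E_h) \ge P^\phi(L)$. Indeed, $L$ minimizes the perimeter only globally at volume $M$, and $\abs{E_h} = M$, so $P_{\T^n}^\phi(E_h) \ge P_{\T^n}^\phi(L)$ does hold by the hypothesis. Hence
\begin{equation*}
\gamma_h \F(E_h) \le J_{\gamma_h}^\phi(L) - P^\phi_{\T^n}(E_h) \le \gamma_h \F(L),
\end{equation*}
that is, $\F(E_h) \le \F(L)$. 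Therefore
\begin{equation*}
J_{\bar\gamma}^\phi(E_h) \le J_{\gamma_h}^\phi(L) + (\bar\gamma - \gamma_h)\F(L) = J_{\bar\gamma}^\phi(L).
\end{equation*}
For $h$ large, $0 < d_{L^1}(E_h, L) < \delta$, and this contradicts the strict inequality $J_{\bar\gamma}^\phi(L) < J_{\bar\gamma}^\phi(E_h)$ from the local minimality results.

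The conclusion is that for all $\gamma$ small enough every global minimizer of $J_\gamma^\phi$ is a translate of $L$, which gives both existence and uniqueness. I expect the main obstacle to be the comparison step between different values of $\gamma$. It hinges on the global isoperimetric inequality $P^\phi(E_h) \ge P^\phi(L)$ to get $\F(E_h) \le \F(L)$, and on the monotonicity of $\gamma \mapsto J_\gamma^\phi(E) - J_\gamma^\phi(L)$ that this inequality provides. Without it, one would need the quantitative estimate of \cref{cor:5.lamminL1} with constants uniform in $\gamma$, which is unavailable in the horizontally flat case. The remaining steps are routine.
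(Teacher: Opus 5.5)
Your proof is correct and follows the paper's argument: compactness and uniqueness for the periodic Wulff problem give $E_h \to L$; then $P_{\T^n}^\phi(E_h) \ge P_{\T^n}^\phi(L)$ together with minimality at $\gamma_h$ yields $\F(E_h) \le \F(L)$; and the monotonicity in $\gamma$ contradicts the isolated local minimality of $L$ for $J_{\bar\gamma}^\phi$. The only cosmetic difference is that the paper uses uniqueness to obtain the strict inequalities $P_{\T^n}^\phi(E_h) > P_{\T^n}^\phi(L)$ and $\F(E_h) < \F(L)$, whereas you let the strictness come from the local minimality result, which works equally well.
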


    \begin{proof} 

        Assume by contradiction that there exists a decreasing sequence 
        $\cb{\gamma_h}_{h \in \N}$ with $\gamma_h \to 0$ as $h \to \infty$
        and a sequence $\cb{E_h}_{h \in \N}$ of sets of finite perimeter in $\T^n$ 
        with $\abs{E_h} = M$ and $d_{L^1} (E_h, L) > 0$ for any $h \in \N$ such that 
        \begin{equation*}
            J_{\gamma_h}^\phi(E_h) = \inf \cb{J_{\gamma_h}^\phi(E) : \abs{E} = M}.
        \end{equation*}
        Up to a subsequence, $E_h \to E$ for some set $E \subset \T^n$ of finite perimeter,
        moreover
        \begin{equation*}
            P_{\T^n}^\phi(E) \le \liminf_{h \to \infty} J_{\gamma_h}^\phi(E_h) \le \liminf_{h \to \infty} J_{\gamma_h}^\phi(L) = P_{\T^n}^\phi(L),
        \end{equation*}
        implying that $E = L$.
        Since $E_h$ minimizes $J_{\gamma_h}^\phi$, we also have
        \begin{equation*}
            0 \ge J_{\gamma_h}^\phi (E_h) - J_{\gamma_h}^\phi (L) 
            = P_{\T^n}^\phi (E_h) - P_{\T^n}^\phi (L) + \gamma_h \big( \F(E_h) - \F(L) \big) > \gamma_h \big( \F(E_h) - \F(L) \big).
        \end{equation*}
        By \cref{cor:2.lamminL1} or \cref{thm:2.lamminL1hor} there exist $\overline \gamma > 0$ and $\delta > 0$ such that
        \begin{equation*}
            J_{\overline \gamma}^\phi (L) < J_{\overline \gamma}^\phi (E) 
        \end{equation*}
        for any subset $E \subset \T^n$ with $\abs{E} = M$ and $0 < d_{L^1}(E, L) < \delta$.
        Since $E_h \to L$, we conclude that for $h \in \N$ sufficiently large it must hold
        \begin{align*}
            0 &> J_{\overline \gamma}^\phi (L) - J_{\overline \gamma}^\phi (E_h) 
            = P_{\T^n}^\phi (L) - P_{\T^n}^\phi (E_h) + \overline \gamma \big( \F(L) - \F(E_h) \big) \\
            &> P_{\T^n}^\phi (L) - P_{\T^n}^\phi (E_h) + \gamma_h \big( \F(L) - \F(E_h) \big) = J_{\gamma_h}^\phi (L) - J_{\gamma_h}^\phi (E_h),
        \end{align*}
        obtaining the desired contradiction.
        \qedhere
        
    \end{proof}

    By the previous proposition, we reduce ourselves to studying the periodic Wulff problem \eqref{eq:2.wulffper}. 
    In the planar case we give a simple condition on a uniformly elliptic surface tension 
    which guarantees the global minimality of the horizontal lamella for a suitable range of masses.
    Actually we are able to classify all minimizers for \eqref{eq:2.wulffper}, which are depicted in \cref{fig:7.wulffmin}.
    In the isotropic case the same kind of result can be found in \cite{HHM99}, which is the one then applied in \cite{AFM13}.

    \begin{thm} \label{thm:7.wulffminell}

        Let $n = 2$ and $\phi$ be a uniformly elliptic surface tension.
        Assume also that
        \begin{equation} \label{eq:7.minilam}
            m_\phi \defeq \phi(e_2) + \phi(- e_2) < \phi(\nu) + \phi(- \nu)
        \end{equation}
        for any $\nu \in \S^1 \setminus \cb{\pm e_2}$.
        Then
        \begin{equation} \label{eq:7.minsuff}
            0 < \frac{m_\phi^2}{4 \abs{W_\phi}} < \frac{1}{2},
        \end{equation}
        and, depending on the volume constraint $M \in (0, 1)$, the following sets are the only minimizers of \eqref{eq:2.wulffper}:
        \begin{equation*}
            \begin{dcases}
                \sqrt{\dfrac{M}{\abs{W_\phi}}} W_\phi \qquad & \text{ if } \quad M \le \dfrac{m_\phi^2}{4 \abs{W_\phi}}, \\
                L_M & \text{ if } \quad \dfrac{m_\phi^2}{4 \abs{W_\phi}} \le M \le 1 - \dfrac{m_\phi^2}{4 \abs{W_\phi}}, \\
                \rb{- \sqrt{\dfrac{1 - M}{\abs{W_\phi}}} W_\phi}^c \qquad & \text{ if } \quad M \ge 1 - \dfrac{m_\phi^2}{4 \abs{W_\phi}},
            \end{dcases}
        \end{equation*}
        where $L_M$ is the horizontal lamella of volume $M$. 
        Moreover, for $M \ne \frac{m_\phi^2}{4 \abs{W_\phi}}, 1 - \frac{m_\phi^2}{4 \abs{W_\phi}}$,
        the minimizer is unique (up to translation).
        
    \end{thm}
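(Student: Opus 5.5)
Our plan is to classify minimizers of the periodic Wulff problem \eqref{eq:2.wulffper} in $\T^2$ by topological type and compare the three candidate energies. We first verify \eqref{eq:7.minsuff}. The Wulff shape has width $\phi(\nu)+\phi(-\nu)$ in direction $\nu$, so $m_\phi$ is its minimal width, attained only in direction $e_2$. The segment $W_\phi\cap\{x_1=c\}$ through a point of maximal vertical extent has length at most $m_\phi$; moreover $W_\phi$ contains a segment of length $\geq m_\phi$ in every direction, and in particular a horizontal chord of length strictly larger than $m_\phi$ by \eqref{eq:7.minilam}. Comparing with the rhombus spanned by a vertical chord of length $m_\phi$ and a horizontal chord of length $\ell>m_\phi$ gives $\abs{W_\phi}\ge \frac12 m_\phi\ell>\frac12 m_\phi^2$, hence $m_\phi^2<2\abs{W_\phi}$. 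One must check that such chords can be chosen to cross, which convexity and central-width arguments should give. Positivity of the ratio is trivial.

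Next we compute the candidate energies. The scaled Wulff shape has $P^\phi=2\sqrt{\abs{W_\phi}M}$, and it fits in $\T^2$ only when it has no self-overlap, which we will handle. The horizontal lamella has energy $m_\phi$. A lamella with rational normal direction $\nu$ winding $(p,q)$ has energy at least $\abs{(p,q)}(\phi(\nu)+\phi(-\nu))\ge m_\phi$, with equality only when $\nu=\pm e_2$ and the winding is primitive. The complement of a Wulff shape has energy $2\sqrt{\abs{W_\phi}(1-M)}$, using $\tilde\phi$. Solving $2\sqrt{\abs{W_\phi}M}\le m_\phi$ gives exactly the thresholds $M\le m_\phi^2/(4\abs{W_\phi})$ and its symmetric counterpart. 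So the claim reduces to showing that every minimizer is one of these three types.

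For the classification we take a minimizer $E$ in $\T^2$. By the regularity theory for $\lambda$-elliptic anisotropic minimizers (as in \cref{thm:5.almostreg}, since volume-constrained minimizers are almost minimizers), $\partial E$ is a finite union of $C^{1,\alpha}$ closed curves with constant anisotropic curvature. In dimension two with uniform ellipticity, each component is therefore either a closed curve that is homotopically trivial in $\T^2$, or a closed geodesic-type curve that is noncontractible. We lift each component to $\R^2$. A contractible component bounds a region, and by Wulff-type rigidity (constant anisotropic curvature curves are arcs of scaled $\partial W_\phi$) it is a translate of a scaled $W_\phi$ or of its complement. For noncontractible components, any two of them are homotopic with the same primitive class $(p,q)$. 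Since $E$ and $E^c$ both have positive measure and Lagrange-multiplier curvature is constant, each noncontractible component must have zero anisotropic curvature, since a closed noncontractible curve with constant nonzero $\phi$-curvature cannot close up. Such components are straight lines, so $E$ is a union of parallel strips, and minimality forces a single strip with normal $\pm e_2$.

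The main obstacle is the mixed-configuration exclusion: ruling out several Wulff shapes, or a Wulff shape coexisting with strips. Here we use the standard argument that all components share the same anisotropic curvature, so all Wulff pieces have the same size. Then the concavity of $t\mapsto\sqrt t$ shows that merging pieces strictly decreases energy. Likewise a strip together with a Wulff shape has zero curvature on one side and nonzero curvature on the other, a contradiction unless the Wulff piece is absent. We would also need to ensure the scaled Wulff shape embeds in $\T^2$ without overlap when $M\le m_\phi^2/(4\abs{W_\phi})$; at that mass its vertical width is $m_\phi\sqrt{M/\abs{W_\phi}}\le m_\phi^2/(2\abs{W_\phi})<1$, and a similar bound in other directions is needed. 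Uniqueness off the thresholds follows from the strict energy comparisons, and at the thresholds two types tie.
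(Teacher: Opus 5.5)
Your classification of minimizers is a legitimate alternative to the paper's. You argue that the anisotropic curvature $H$ is constant, that a noncontractible component forces $H=0$ because a curve of constant nonzero $\phi$-curvature lies on a bounded translate of $\frac1H\partial W_\phi$, that contractible components are scaled Wulff shapes, and that several equal Wulff pieces lose to one by concavity of $\sqrt t$. The paper instead uses $H_E\ge H^\phi_E/\lambda>0$ to get convex components, proves connectedness by translating one component until it touches another, and then applies the Wulff inequality in $\R^2$.

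The genuine gap is the point you flag at the end and leave open. You never show that $\mu_M W_\phi$, with $\mu_M=\sqrt{M/\abs{W_\phi}}$, is an admissible set in $\T^2$ for $M\le \overline M = m_\phi^2/(4\abs{W_\phi})$, i.e. that its $\Z^2$-translates do not overlap. Without this, nothing shows the Wulff shape is a minimizer, let alone the unique one, for $M<\overline M$. Your merging argument needs it too, since the merged shape must fit. You handled the vertical height. What is missing is the horizontal width $\mu_{\overline M}K_\phi = m_\phi K_\phi/(2\abs{W_\phi})$, where $K_\phi=\phi(e_1)+\phi(-e_1)$, so you need $2\abs{W_\phi}>m_\phi K_\phi$. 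Only this one further direction matters, not ``other directions'': height and width below $1$ already put the closure in an open unit square. Your chord argument cannot give this inequality. It bounds $\abs{W_\phi}$ below by $\frac12 m_\phi\ell$ with $\ell$ the length of a horizontal chord, and in general $\ell< K_\phi$.

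The missing idea is a vertical double normal. Since $e_2$ minimizes $\nu\mapsto\phi(\nu)+\phi(-\nu)$, differentiating $t\mapsto(\phi(t,1)+\phi(-t,-1))/\sqrt{1+t^2}$ at $t=0$ gives $\partial_1\phi(0,1)=\partial_1\phi(0,-1)=\eta$. Hence the contact points $\nabla\phi(\pm e_2)=(\eta,\pm\phi(\pm e_2))$ of $\overline W_\phi$ with its horizontal support lines are joined by a vertical segment of length $m_\phi$. The quadrilateral with these two vertices and the leftmost and rightmost points of $\overline W_\phi$ has area $m_\phi K_\phi/2$, and strict convexity gives $2\abs{W_\phi}>m_\phi K_\phi$. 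This single inequality yields both $m_\phi^2<2\abs{W_\phi}$ (as $K_\phi>m_\phi$ by \eqref{eq:7.minilam}) and the fit.

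It also repairs your first step. There you assert ``a chord of length $\ge m_\phi$ in every direction, in particular a horizontal one of length $>m_\phi$'' without the argument that produces the strict inequality. Your worry about the two chords crossing, on the other hand, is unnecessary: any horizontal chord together with the top and bottom contact points already spans area $\frac12 m_\phi\ell$.
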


    \begin{figure}
        \centering
        \begin{tikzpicture} [scale = 1.7]
            \draw[fill = gray!20] (- 1, - 1) -- (- 1, 1) -- (1, 1) -- (1, - 1) -- cycle;
            \draw[dashed, fill = gray!50, scale = 0.4, shift={(0.6, - 0.5)}] (1, 0.8) to[out = 90, in = 0] (- 0.5, 1.5) to[out = 180, in = 90] 
            (- 2.5, 0.5) to[out = - 90, in = 180] (- 0.5, - 0.5) to[out = 0, in = - 90] cycle;
            \draw[thick, ->](- 1, - 0.00001) -- (- 1, 0.00001);
            \draw[thick, ->](1, - 0.00001) -- (1, 0.00001);
            \draw[thick, ->](- 0.00001, - 1) -- (0.00001, - 1);
            \draw[thick, ->](- 0.00001, 1) -- (0.00001, 1);
            \draw(1.3, 0.8) node{$\T^2$};
            \draw(0, 0) node{$\mu_M W_\phi$};
        \end{tikzpicture}
        \hspace{1 cm}
        \begin{tikzpicture} [scale = 1.7]
            \fill[gray!20] (- 1, - 1) -- (- 1, 1) -- (1, 1) -- (1, - 1) -- cycle;
            \fill[gray!50] (- 1, - 0.4) rectangle (1, 0.4);
            \draw[dashed] (- 1, 0.4) -- (1, 0.4);
            \draw[dashed] (- 1, - 0.4) -- (1, - 0.4);
            \draw (- 1, - 1) -- (- 1, 1) -- (1, 1) -- (1, - 1) -- cycle;
            \draw[thick, ->](- 1, - 0.00001) -- (- 1, 0.00001);
            \draw[thick, ->](1, - 0.00001) -- (1, 0.00001);
            \draw[thick, ->](- 0.00001, - 1) -- (0.00001, - 1);
            \draw[thick, ->](- 0.00001, 1) -- (0.00001, 1);
            \draw(1.3, 0.8) node{$\T^2$};
            \draw(0.5, - 0.1) node{$L_M$};
        \end{tikzpicture}
        \hspace{1 cm}
        \begin{tikzpicture} [scale = 1.7]
            \draw[fill = gray!50] (- 1, - 1) -- (- 1, 1) -- (1, 1) -- (1, - 1) -- cycle;
            \draw[dashed, fill = gray!20, scale = - 0.4, shift={(0.6, - 0.5)}] (1, 0.8) to[out = 90, in = 0] (- 0.5, 1.5) to[out = 180, in = 90] 
            (- 2.5, 0.5) to[out = - 90, in = 180] (- 0.5, - 0.5) to[out = 0, in = - 90] cycle;
            \draw[thick, ->](- 1, - 0.00001) -- (- 1, 0.00001);
            \draw[thick, ->](1, - 0.00001) -- (1, 0.00001);
            \draw[thick, ->](- 0.00001, - 1) -- (0.00001, - 1);
            \draw[thick, ->](- 0.00001, 1) -- (0.00001, 1);
            \draw(1.3, 0.8) node{$\T^2$};
            \draw(0.1, 0.7) node{$(- \mu_{1 - M} W_\phi)^c$};
        \end{tikzpicture}
        \caption{The minimizers of problem \eqref{eq:2.wulffper} for increasing values of the the volume $M$: 
        for small volumes, the minimizer is a rescaling of the Wulff shape, for volumes close to $1/2$ the minimizer must be the horizontal lamella
        and finally, for $M$ close to $1$, the minimizer is equal to the complement of a rescaled copy of the Wulff shape reflected through the origin.}
        \label{fig:7.wulffmin}
    \end{figure}
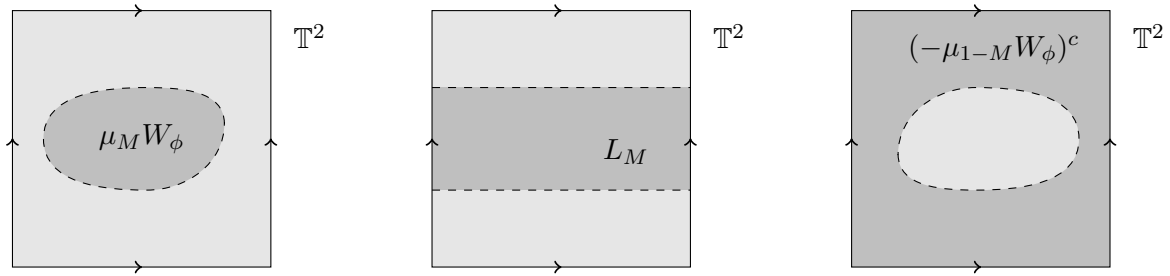

    \begin{proof}
        
        We divide the proof into four steps: we first show that the boundary of a minimizer for \eqref{eq:2.wulffper} is of class $C^2$,
        by some regularity results in the planar case, based on \cite{ANP02}; 
        in the second step we prove the validity of \eqref{eq:7.minsuff} in a stronger form, 
        which we subsequently need to show that a rescaled copy of the Wulff shape is contained in the square $(0, 1)^2$ 
        up to a certain volume; in the last part, we classify the minimizers for \eqref{eq:2.wulffper} by means of the anisotropic mean curvature, 
        finally obtaining the statement of the theorem.
        
        \vspace{5 pt}
        \noindent
        \textit{Step one}: 
        we first observe that, in the planar setting, the boundary of any minimizer $E$ of \eqref{eq:2.wulffper} is of class $C^2$.
        Indeed, $\partial E$ is of class $C^1$ (see for instance \cite[Theorem 6.4; Theorem 6.18]{ANP02}). 
        Then, we fix a generic point $x_0 \in \partial E$ and, up to a rotation and a translation, we may assume that $x_0 = 0$
        and that there exist $r > 0$ and $u \in C^1((- r, r))$ such that
        \begin{equation*}
            B(0, r) \cap \partial E = \cb{(s, u(s)) \in \R^2 : \abs{s} < r}, \qquad \qquad
            B(0, r) \cap E = \cb{(s, t) \in B(0, r) : t < u(s)}.
        \end{equation*}
        Proceeding like in \cref{sec:2}, we have that $P_{\T^2}^\phi(E; B(0, r)) = \int_{- r}^r \phi(- u', 1)$,
        moreover, by volume-constrained minimality of $E$, it holds
        \begin{equation*}
            0 = \dfrac{d}{d t} \rb{\int_{- r}^r \phi \big(- u' + t \varphi', 1 \big)} \restrict{t = 0}
            = \int_{- r}^r \varphi' \partial_1 \phi(- u', 1),
        \end{equation*}
        for any $\varphi \in C_c^\infty((- r, r))$ with $\int_{- r}^r \varphi = 0$.
        In particular, by arbitrariness of $\varphi$, $\big( \partial_1 \phi(- u', 1) \big)' = \lambda$ for some constant $\lambda \in \R$, 
        and \cite[Theorem 7.56]{AFP00} ensures that $u \in W^{2, 2}_\loc((- r, r))$.
        As a consequence,
        \begin{equation*}
            u'' = \frac{\lambda}{\partial_{1 1}^2 \phi(- u', 1)},
        \end{equation*}
        where we note that $\partial_{1 1}^2 \phi(- u', 1) > 0$ by the assumption of uniform ellipticity, 
        proving that $u$ is of class $C^2$ and so is $\partial E$.

        \vspace{5 pt}
        \noindent
        \textit{Step two}: 
        we next prove a stronger form of $\abs{W_\phi} > \frac{m_\phi^2}{2}$, implying the validity of \eqref{eq:7.minsuff}.
        We may find two points $x^+, x^- \in \partial W_\phi$ with $\nu_{W_\phi}(x^\pm) = \pm e_2$,
        moreover, by \cref{lem:2.subsurfchar} one has $\nabla \phi(\pm e_2) = x^\pm$,
        which in turn implies $(x^\pm)_2 = \partial_2 \phi(\pm e_2) = \pm \phi(\pm e_2)$. 
        As a consequence of \eqref{eq:7.minilam} one has that the map $\Phi \in C^2(\R)$ defined by
        \begin{equation*}
            \Phi(t) \defeq \phi \rb{\frac{(t, 1)}{\sqrt{1 + t^2}}} + \phi \rb{\frac{(- t, - 1)}{\sqrt{1 + t^2}}}, \qquad \qquad t \in \R,
        \end{equation*}
        has a minimum at $t = 0$, so that 
        \begin{equation*}
            0 = \Phi'(0) = \partial_1 \phi(0, 1) - \partial_1 \phi(0, - 1).
        \end{equation*}
        In particular, denoting $\eta = \partial_1 \phi(0, 1) = \partial_1 \phi(0, - 1)$, we have that $x^\pm = (\eta, \pm \phi(\pm e_2))$.
        There also exist two points $x^r, x^l \in \partial W_\phi$ such that $(x^r)_1 = \phi(e_1)$ and $(x^l)_1 = - \phi(- e_1)$,
        so that the area of the quadrilateral $Q$ with vertices $x^+, x^-, x^r$ and $x^l$ (see \cref{fig:7.quad}) is
        \begin{equation*}
            \abs{Q} = \dfrac{\big( \phi(e_2) + \phi(- e_2) \big) \big( \phi(e_1) - \eta \big)}{2} 
            + \dfrac{\big( \phi(e_2) + \phi(- e_2) \big) \big( \phi(- e_1) + \eta \big)}{2}
            = \dfrac{m_\phi K_\phi}{2},
        \end{equation*}
        where $K_\phi \defeq \phi(e_1) + \phi(- e_1)$.
        Since $Q \subset \overline W_\phi$ and the latter is strictly convex, we conclude that
        \begin{equation} \label{eq:7.minsuffstr}
            \abs{W_\phi} > \abs{Q} = \frac{m_\phi K_\phi}{2}.
        \end{equation}
        By means of \eqref{eq:7.minilam} we have $K_\phi > m_\phi$, so that $\abs{W_\phi} > \frac{m_\phi^2}{2}$.
        \begin{figure}
            \centering
            \begin{tikzpicture}[scale = 0.8]
                \draw[dashed, fill = gray!50] (1, 0.8) to[out = 90, in = 0] (- 0.5, 1.5) to[out = 180, in = 90] (- 2.5, 0.5) 
                to[out = - 90, in = 180] (- 0.5, - 0.5) to[out = 0, in = - 90] cycle;
                \draw[dashdotted, fill = gray!75] (1, 0.8) -- (- 0.5, 1.5) -- (- 2.5, 0.5)  -- (- 0.5, - 0.5) -- cycle;
                \draw[fill = black] (1, 0.8) circle (0.03) node[right]{$x^r$};
                \draw[fill = black] (- 0.5, 1.5) circle (0.03) node[above]{$x^+$};
                \draw[fill = black] (- 2.5, 0.5) circle (0.03) node[left]{$x^l$};
                \draw[fill = black] (- 0.5, - 0.5) circle (0.03) node[below]{$x^-$};
                \draw[thick, -latex] (0, - 2.5) -- (0, 2.5) node[left]{$x_2$};
                \draw[thick, -latex] (- 3, 0) -- (3, 0) node[above]{$x_1$};
                \draw(0.9, 1.5) node{$W_{\phi}$};
                \draw(- 0.6, 0.6) node{$Q$};
            \end{tikzpicture}
            \hspace{1 cm}
            \begin{tikzpicture}[scale = 0.8]
                \draw[dashdotted, fill = gray!75] (1, 1.5) -- (- 2.5, 1.5) -- (- 2.5, - 0.5) -- (1, - 0.5) -- cycle;
                \draw[dashed, fill = gray!50] (1, 0.8) to[out = 90, in = 0] (- 0.5, 1.5) to[out = 180, in = 90] (- 2.5, 0.5) 
                to[out = - 90, in = 180] (- 0.5, - 0.5) to[out = 0, in = - 90] cycle;
                \draw[thick, -latex](0, - 2.5) -- (0, 2.5) node[left]{$x_2$};
                \draw[thick, -latex](- 3, 0) -- (3, 0) node[above]{$x_1$};
                \draw[|-|] (1.2, - 0.5) -- (1.2, 1.5) node[midway, right] {$m_\phi$};
                \draw[|-|] (- 2.5, - 0.7) -- (1, - 0.7) node[midway, below] {$K_\phi$};
                \draw(- 1.6, 1.8) node{$Q'$};
                \draw(- 0.6, 0.6) node{$W_{\phi}$};
            \end{tikzpicture}
            \caption{The quadrilaterals $Q$ and $Q'$ described in Step two and Step three of the proof of \cref{thm:7.wulffminell}.}
            \label{fig:7.quad}
        \end{figure}
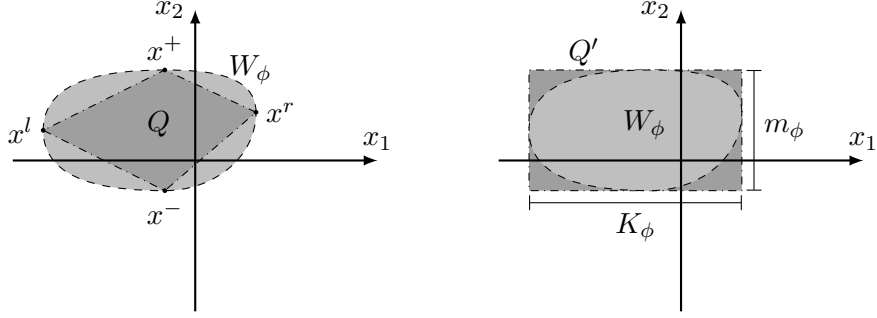
        
        \vspace{5 pt}
        \noindent
        \textit{Step three}: 
        let $\overline M = m_\phi^2/(4 \abs{W_\phi})$ and set $\mu_M \defeq \sqrt{M/\abs{W_\phi}}$ for $M \in (0, 1)$. 
        We show that, for $M \le \overline M$ the set $\mu_M W_\phi$, which is a rescaled copy of the Wulff shape of $\phi$ of volume $M$, 
        is (up to translation) compactly contained in the open square $(0, 1)^2$.
        This amounts to prove that $\mu_{\overline M} \overline W_\phi \subset (0, 1)^2$.
        Indeed, consider the closed rectangle $Q'$ with axis-aligned sides such that $W_\phi$ is inscribed in $Q'$ (see \cref{fig:7.quad}),
        then $Q'$ has basis $K_\phi$ and height $m_\phi$.
        By \eqref{eq:7.minsuffstr} there exists $\varepsilon > 0$ such that $2 \abs{W_\phi} > m_\phi (K_\phi + \varepsilon)$, so that
        \begin{equation*}
            \mu_{\overline M} \overline W_\phi = \frac{m_\phi}{2 \abs{W_\phi}} \overline W_\phi 
            \subset \frac{m_\phi}{2 \abs{W_\phi}} Q' \subset \frac{Q'}{K_\phi + \varepsilon},
        \end{equation*}
        but this last set is a rectangle with axis-aligned sides whose basis and height are strictly smaller than $1$, 
        so that it is (up to a translation) compactly contained in $(0, 1)^2$.

        \vspace{5 pt}
        \noindent
        \textit{Step four}: 
        let $E$ be a minimizer of volume $M \in (0, 1)$ for problem \eqref{eq:2.wulffper}, then by Step one $E$ is a set with boundary of class $C^2$.
        One may consider the first variation of the anisotropic perimeter
        $P_{\T^2}^\phi(E)$ with respect to an initial velocity $T \in C_c^1(\T^2; \R^2)$, given by
        \begin{equation*}
            \partial P_{\T^2}^\phi(E)[T] = \int_{\reb E} H_E^\phi (T \cdot \nu_E) \de \Ha^1.
        \end{equation*}
        where $H_E^\phi : \partial E \to \R$ is the \textit{anisotropic mean curvature} of $E$, defined as
        \begin{equation*}
            H_E^\phi(x) \defeq - \dive_\tau \Big( \nabla \phi \big( \nu_E(x) \big) \Big), \qquad \qquad x \in \partial E,
        \end{equation*}
        with $\dive_\tau$ denoting the tangential divergence on $\partial E$.
        By requiring $T$ to be volume preserving (i.e., $\int_{\reb E} T \de \Ha^1 = 0$)
        the anisotropic mean curvature of a minimizer of \eqref{eq:2.wulffper} is easily seen to be equal to a constant $H \in \R$.
        In the planar case, for an elliptic surface tension $\phi$ we have upper and lower bounds on $H_E^\phi$ 
        in terms of the mean curvature $H_E$:
        there exists a continuous tangent vector field $\tau_E : \partial E \to \R^2$ such that, 
        for any $x \in \partial E$, $\tau_E(x)$ and $\nu_E(x)$ form an orthonormal basis of $\R^2$. 
        Hence,
        \begin{align*}
            H_E^\phi(x) &= - \dive_\tau \Big( \nabla \phi \big( \nu_E(x) \big) \Big)
            = - \partial_\tau \Big( \partial_\tau \phi \big( \nu_E(x) \big) \Big) \\
            &= - \partial^2_{\tau \tau} \phi \big( \nu_E(x) \big) \big( \tau(x) \cdot \partial_\tau \nu_E(x) \big)
            = \partial^2_{\tau \tau} \phi \big( \nu_E(x) \big) H_E(x),
        \end{align*}
        which in turn implies that there exists $\lambda > 0$ with
        \begin{equation} \label{eq:7.animeanbound}
            \dfrac{1}{\lambda} H_E(x) \le H_E^\phi(x) \le \lambda H_E(x)
        \end{equation}
        by uniform ellipticity of $\phi$.
        We consider the periodic extension $E_{\R^2}$ of $E$ to $\R^2$, and we study its geometry in relation with the sign of $H_E^\phi$.
        
        \begin{itemize}
            \item 
                If $H_E^\phi = 0$ then $H_E = 0$ and $\partial E_{\R^2}$ is the union of parallel lines.
                Notice that $E$ is connected as a subset of $\T^2$ 
                (otherwise, translating a connected component as to touch another we would contradict minimality).
                Moreover, the length $\ell(r)$ of any line $r$ in the flat torus (that is, the quotient of a line of $\R^2$) is at least $1$, 
                and denoting by $\nu \in \S^1$ a normal vector to $r$ we see by \eqref{eq:7.minilam} that
                \begin{equation*}
                    P_{\T^2}^\phi(E) = \big( \phi(\nu) + \phi(- \nu) \big) \ell(r) 
                    \ge \phi(\nu) + \phi(- \nu) \ge \phi(e_2) + \phi(- e_2) = P_{\T^2}^\phi(L_M),
                \end{equation*}
                with strict inequality if $\nu \in \S^1 \setminus \cb{\pm e_2}$, so that necessarily $E = L_M$.
            
            \item 
                If $H_E^\phi > 0$, fix one of the connected components of $E_{\R^2}$, denoted by $F \subset \R^2$, 
                and notice that $H_E^\phi = H_F^\phi$. 
                By means of \eqref{eq:7.animeanbound} we have that $H_F \ge H_E^\phi/\lambda > 0$,
                so that $F$ is strictly convex and bounded, 
                which in turn implies that every connected component of $E_{\R^2}$ is bounded and strictly convex.
                Now, if $E \subset \T^2$ had more than one connected component, 
                by regularity of minimizers and translational invariance of the anisotropic perimeter, 
                we may translate a connected component as to touch another, contradicting regularity.
                This means that $E$ is connected as a subset of $\T^2$, and going back to $E_{\R^2}$, we note that
                any of its connected components $F \subset \R^2$ is just a translation of one another, moreover $\abs{F} = \abs{E} = M$.
                In that case we may confront the anisotropic perimeter of $F$ on $\R^2$ directly with $\mu_M W_\phi$, obtaining
                \begin{equation*}
                    P_{\T^2}^\phi(E) = P^\phi(F) \ge P^\phi(\mu_M W_\phi),
                \end{equation*}
                with equality if and only if $F = \mu_M W_\phi$ (up to translation).
                If $M \le \overline M$, by Step three we know that $\mu_M W_\phi \subset (0, 1)^2$ (up to translation) so that
                $P_{\T^2}^\phi(E) \ge P^\phi(\mu_M W_\phi) = P_{\T^2}^\phi(\mu_M W_\phi)$, with equality if and only if 
                $E = \mu_M W_\phi$, which in turn implies exactly that $E = \mu_M W_\phi$.
                Observe also that
                \begin{equation*}
                    P^\phi (\mu_{\overline M} W_\phi) = \mu_{\overline M} P^\phi (W_\phi) 
                    = \frac{m_\phi}{2 \abs{W_\phi}} \cdot (2 \abs{W_\phi}) = m_\phi = P_{\T^2}^\phi(L_{\overline M}).
                \end{equation*}
                If instead $M > \overline M$, we notice that 
                \begin{equation*}
                    P_{\T^2}^\phi(E) \ge P^\phi(\mu_M W_\phi) > P^\phi(\mu_{\overline M} W_\phi) 
                    = P_{\T^2}^\phi(L_{\overline M}) = P_{\T^2}^\phi(L_{M}),
                \end{equation*}
                that is, the set $E$ cannot be a minimizer.
                As a consequence $\mu_M W_\phi$ is a minimizer for \eqref{eq:2.wulffper} if and only if $M \le \overline M$,
                and it is the unique minimizer for $M < \overline M$.
                
            \item 
                If $H_E < 0$, consider the uniformly elliptic surface tension $\tilde \phi (x) \defeq \phi(- x)$, $x \in \R^2$;
                in that case, the complement $E^c = \T^2 \setminus E$ is such that 
                $P_{\T^2}^{\tilde \phi} (E^c) = P_{\T^2}^\phi (E)$, $H_{E^c}^{\tilde \phi} = - H_E^\phi > 0$, and it has volume $\abs{E^c} = 1 - M$.
                On noting that
                \begin{equation*}
                    W_{\tilde \phi} = \bigcap_{y \in \S^1} \Big\{ x \in \R^2 : x \cdot y < \phi(- y) \Big\}
                    = \bigcap_{y \in \S^1} \Big\{ - x \in \R^2 : x \cdot y < \phi(y) \Big\} = - W_\phi,
                \end{equation*}
                we conclude by the previous point that for $1 - M \le \overline M$, that is, $M \ge 1 - \overline M$, it holds
                $E^c = \mu_{1 - M} W_{\tilde \phi} = - \mu_{1 - M} W_\phi$ and $E$ is the unique minimizer if $M > 1 - \overline M$, 
                whereas, if $1 - M > \overline M$, then $E$ cannot be a minimizer.
        
        \end{itemize}
        This concludes the proof.
        \qedhere
       
    \end{proof}    

    \begin{rem}

        If one were to weaken the assumption \eqref{eq:7.minilam} by requiring instead that
        \begin{equation} \label{eq:7.minilam2}
            \phi(e_2) + \phi(- e_2) \le \phi(\nu) + \phi(- \nu) \qquad \qquad \text{ for any } \nu \in \S^1,
        \end{equation}
        by following the same proof we would again get that the horizontal lamella of volume $M$ is a minimizer for 
        $\frac{m_\phi^2}{4 \abs{W_\phi}} \le M \le 1 - \frac{m_\phi^2}{4 \abs{W_\phi}}$, 
        but if $\phi(e_2) + \phi(- e_2) = \phi(e_1) + \phi(- e_1)$ then also the vertical lamella 
        (defined as $(\ell_1, \ell_2) \times \T^1$ for some $\ell_1, \ell_2 \in (0,1)$, $\ell_1 < \ell_2$) 
        is a minimizer for the same range of masses.
        As a consequence, we retrieve the result valid for the isotropic case $\phi = \abs{\cdot}$ presented in \cite{HHM99}.
        Whether the result still holds for the crystalline case (and more in general, for surface tensions which are not uniformly elliptic)
        is still unknown, and requires further study.
        
    \end{rem}

\printbibliography

\end{document}